\newtheorem{theorem}{Theorem}[section]
\newtheorem{lemma}[theorem]{Lemma}
\newtheorem{algorithm}[theorem]{Algorithm}
\newtheorem{remark}[theorem]{Remark}
\newtheorem{corollary}[theorem]{Corollary}
\providecommand{\keywords}[1]
{
  \noindent \small	
  \textbf{Keywords:} #1
}
\providecommand{\amscode}[1]
{
  \noindent \small	
  \textbf{AMS subject classifications:} #1
}
\def\blfootnote#1{%
  \gdef\@thefnmark{}%
  \@footnotetext{\noindent #1}%
}
\newcommand{\rd}{\, \mathrm{d}}
\newcommand{\rand}{\mathrm{rand}}
\newcommand{\bszero}{\boldsymbol{0}}
\newcommand{\bsh}{\boldsymbol{h}}
\newcommand{\bsk}{\boldsymbol{k}}
\newcommand{\bsl}{\boldsymbol{\ell}}
\newcommand{\bsL}{\boldsymbol{L}}
\newcommand{\bsx}{\boldsymbol{x}}
\newcommand{\bsz}{\boldsymbol{z}}
\newcommand{\bsgamma}{\boldsymbol{\gamma}}
\newcommand{\bsDelta}{\boldsymbol{\Delta}}
\newcommand{\EE}{\mathbb{E}}
\newcommand{\NN}{\mathbb{N}}
\newcommand{\RR}{\mathbb{R}}
\newcommand{\ZZ}{\mathbb{Z}}
\newcommand{\Acal}{\mathcal{A}}
\newcommand{\Bcal}{\mathcal{B}}
\newcommand{\Hcal}{\mathcal{H}}
\newcommand{\Pcal}{\mathcal{P}}
\newcommand{\fraku}{\mathfrak{u}}
\newcommand{\tmod}[1]{{\;(\mathrm{mod}\; #1)}}
\newcommand{\e}{\mathbb{E}}
\newcommand{\bsone}{\boldsymbol{1}}
\DeclareMathOperator{\supp}{supp}
\newcommand{\icomp}{\mathtt{i}}
\newcommand{\abs}[1]{\left\vert#1\right\vert}
\newcommand{\norm}[1]{\left\Vert#1\right\Vert}
\title{Universal $L_2$-approximation using median lattice algorithms}
\author[1,2]{Zexin Pan}
\author[3]{Takashi Goda}
\author[2]{Peter Kritzer}
\affil[1]{Institute of Fundamental and Transdisciplinary Research, Zhejiang University, 866 Yuhangtang Road, Xihu District, Hangzhou, Zhejiang Province, 310058, China}
\affil[2]{Johann Radon Institute for Computational and Applied Mathematics (RICAM), Austrian Academy of Sciences, Altenbergerstr. 69, 4040 Linz, Austria}
\affil[3]{Graduate School of Engineering, The University of Tokyo, 7-3-1 Hongo, Bunkyo-ku, Tokyo 113-8656, Japan}
\date{\today}
\begin{document}

\maketitle

\blfootnote{Email addresses: \url{zep002@zju.edu.cn} (Zexin Pan) ; \url{goda@frcer.t.u-tokyo.ac.jp} (Takashi Goda); \url{peter.kritzer@oeaw.ac.at} (Peter Kritzer)}

\sloppy

\begin{abstract}
We study the problem of multivariate $L_2$-approximation of functions in a weighted Korobov space using a median lattice-based algorithm recently proposed by the authors. In the original work, the algorithm requires knowledge of the smoothness and weights of the Korobov space to construct the hyperbolic cross index set, where each coefficient is estimated via the median of approximations obtained from randomly shifted, randomly chosen rank-1 lattice rules. In this paper, we introduce a \emph{universal median lattice-based algorithm}, which eliminates the need for any prior information on smoothness and weights. Although the tractability property of the algorithm slightly deteriorates, we prove that, for individual functions in the Korobov space with arbitrary smoothness and (downward-closed) weights, it achieves an $L_2$-approximation error arbitrarily close to the optimal rate with respect to the number of function evaluations. Numerical experiments are conducted to support our theoretical claim.
\end{abstract}

\keywords{multivariate $L_2$-approximation, median algorithm, 
rank-1 lattice rule, weighted Korobov space, universality}

\amscode{41A25, 41A63, 65D15, 65D30, 65Y20}
%%%%%%%%%%%%%%%%%%%%%%%%%%%%%%%%%%

%%%%%%%%%%%%%%%%%%%%%%%%%%%%%%%%%%%%%%%%%%

\section{Introduction}

This paper deals with the classical problem of approximating functions in the $L_2$-norm, defined on a so-called \textit{weighted Korobov space}, which consists of $d$-variate, one-periodic functions with sufficiently fast decaying Fourier coefficients. Korobov spaces are reproducing kernel Hilbert spaces, and it is known that their elements can be represented pointwise by absolutely convergent Fourier series, i.e., we are dealing with functions of the form 
\[
f(\bsx)=\sum_{\bsh\in\ZZ^d} \widehat{f}(\bsh) \exp (2\pi\icomp \bsh\cdot \bsx),
\]
where, as usual, 
\[
 \widehat{f}({\bsh}):=\int_{[0,1)^d} f(\bsx) \exp({-2\pi\icomp \bsh\cdot \bsx}) \rd \bsx
\]
is the $\bsh$-th Fourier coefficient of $f$ for $\bsh\in\ZZ^d$, where $\ZZ$ denotes the set of integers. 

Korobov spaces are
of particular interest in the context of \textit{quasi-Monte Carlo (QMC) rules}, and have been studied frequently in the literature, see, e.g., \cite{dick2022lattice, kuo2013high, novak2008tractability} and the references therein. Indeed, we shall also use QMC methods in the present paper; these methods are equal-weight quadrature rules that are frequently used for the numerical integration of multivariate functions. To be more precise, QMC rules have the form 
\[
\frac{1}{N}\sum_{k=0}^{N-1}g(\bsx_k) \approx \int_{[0,1]^d} g(\bsx) \rd \bsx
\]
for a suitably defined $g$, where $\bsx_0,\ldots,\bsx_{N-1}$ are carefully selected sampling points. 

In the context of 
$L_2$-approximation, QMC rules are typically employed for approximately computing suitably selected Fourier coefficients of the function to be approximated, or in splines. Previous work analyzing such methods include \cite{BGKS24,CGK24,CKNS20,kuo2006lattice,li2003trigonometric,PKG24, zeng2006error} and many others. The approach considered here uses \textit{rank-1 lattice point sets} as the QMC samples, and in particular studies the use of so-called \textit{median rules}. Median rules have first been dealt with for numerical integration and are based on the following principle. When working with QMC integration rules, one often introduces a suitable randomization thereof, and then repeats the integration algorithm for independent realizations of the randomized rule. For median rules, one then takes the median of these independent instances as an approximation to an integral. A considerable advantage of median rules over usual QMC rules is that they are frequently independent of the parameters (as, e.g., the smoothness) of particular function spaces, and therefore offer a high degree of \textit{universality}. Median rules for numerical integration have been presented, e.g., in \cite{goda2022free,goda2024universal,pan2024super-pol}. In the recent paper \cite{PKG24}, the authors have successfully used median lattice algorithms for $L_2$-approximation in weighted Korobov spaces. However, a drawback of the results in that paper is the partial loss of the universality of the median approach, due to the definition of the algorithm in \cite{PKG24}. The goal of the present paper is to provide an alternative algorithm that regains this universality. As we shall see below, our algorithm can be shown to converge at a rate that is arbitrarily close to optimal. By ``arbitrarily close to optimal'' we mean that if we consider the number $M$ of function evaluations of a given function $f$ required by our algorithm to compute an approximation to $f$, the algorithm converges with a rate of $\mathcal{O} (M^{-\alpha + \varepsilon})$ for arbitrarily small $\varepsilon>0$, where $\alpha$ is the \textit{smoothness parameter} of the Korobov space under consideration (see below for the precise definition of Korobov spaces).

As shown in \cite{kuo2006lattice} and related works, using a single lattice rule with $M$ nodes to approximate the Fourier coefficients yields a convergence rate of the worst-case (i.e., considering the supremum over all functions in the unit ball of the space) $L_2$-approximation error (see \Cref{sec:preliminaries}) arbitrarily close to $\mathcal{O}(M^{-\alpha/2})$. This result is in a certain sense best possible, because, in a slightly more general setting, \cite{byrenheid2017tight} proves that the rate $\mathcal{O}(M^{-\alpha/2})$ cannot be improved with such an approach for any $d\ge 2$. However, as shown in \cite{kaemmerer2019constructing} and \cite{kammerer2019approximation}, this convergence rate can be improved in a probabilistic sense by employing so-called \textit{multiple lattices}. Moreover, in the recent paper \cite{CGK24} by Cai, Goda, and Kazashi, the authors introduced a randomized lattice-based algorithm for $L_2$-approximation and proved that one can achieve a convergence rate arbitrarily close to $\mathcal{O}(M^{-\alpha(2\alpha+1)/(4\alpha+1)+\varepsilon})$ of the worst-case root-mean-squared $L_2$-approximation error, which is better than the rate $\mathcal{O}(M^{-\alpha/2})$. The same paper proves that this randomized algorithm cannot achieve a better rate than $\mathcal{O}(M^{-\alpha/2 - 1/2})$. In \cite{PKG24}, we used median lattice rules based on 
$R$ randomized lattice point sets with $N$ points each, i.e., our algorithm required $M=RN$ function values of a function $f$ to be approximated. For such algorithms, we could show that we achieve 
a root-mean-squared error with a convergence order arbitrarily close to $\mathcal{O} (M^{-\alpha})$, and this result is best possible for Korobov spaces. However, as pointed out above, the drawback of the result in \cite{PKG24} is that the algorithm is not universal with respect to the parameters of the Korobov space, which we try to overcome in the present paper. (Note that there exist other approaches by various authors that yield a convergence rate arbitrarily close to $\mathcal{O} (M^{-\alpha})$, but these are not based on lattice rules or QMC methods; see, e.g., \cite{CM17, DC24, K19, WW07}.)

Moreover, we will show that, under suitable assumptions, this convergence can take place with only a mild dependence on the dimension of the problem. We will achieve this by making use of the concept of weighted function spaces in the sense of Sloan and Wo\'{z}niakowski, see \cite{sloan1998when}. To this end, let $\bsgamma=(\gamma_{\fraku})_{\fraku\subseteq \{1{:}d\}}$ be a collection of \textit{weights}. Here and in the following, we write $\{1{:}V\}$ to denote the set $\{1,\ldots, V\}$ for a positive integer $V$. The basic idea of weights is to assign a positive number to every possible group of variables $\fraku\subseteq \{1{:}d\}$, which describes how much influence the
respective groups of variables have in the problem. Large values of $\gamma_{\fraku}$ indicate higher influence, low values of $\gamma_{\fraku}$ indicate less influence. These weights are incorporated in the inner product and norm of the weighted Korobov space. It is known that under certain summability conditions on the weights the curse of dimensionality can be weakened or even vanquished completely. This corresponds to the idea that, while the approximation problem may nominally depend on a large number of variables, their influence on the problem diminishes fast, such that only rather few of them contribute significantly to the problem. 

The rest of the paper is structured as follows.
In Section~\ref{sec:preliminaries}, we introduce the necessary preliminaries and notation, including the weighted Korobov space, the $L_2$-approximation problem, and rank-1 lattice rules.
Section~\ref{sec:median} presents the universal median lattice-based algorithm and states the main probabilistic $L_2$-error bound (Theorem~\ref{thm:combined_error}), along with a discussion of its tractability properties.
In Section~\ref{sec:error}, we provide a detailed analysis of the three terms in the error decomposition \eqref{eqn:errordecom}, establishing probabilistic bounds that, when combined, yield the main result.
In Section~\ref{sec:experiment}, we conduct numerical experimetns to support our theoretical claim.
We conclude this paper with a discussion in Section~\ref{sec:discussion}.

\section{Preliminaries and notation}\label{sec:preliminaries}

In what follows, we denote the set of integers by $\ZZ$ and the set of positive integers by $\NN$. As usual, we denote the exponential function by $\exp$, and write $\exp(1)=e$ for short. Bold symbols are used to denote vectors, whose lengths are usually clear from the context. We denote the imaginary unit by $\icomp := \sqrt{-1}$, and further notation will be introduced as necessary.

\subsection{The weighted Korobov space}
Let $\bsgamma=(\gamma_{\mathfrak{u}})_{\mathfrak{u}\subseteq\{1{:}d\}}$ be a collection of positive real numbers, which we refer to as the weights in what follows. As mentioned in the introduction, the weight $\gamma_{\mathfrak{u}}$ models the influence of the group of variables $x_j$ with indices $j\in\mathfrak{u}$. 
Throughout this paper, we assume $0<\gamma_\fraku\leq 1$; this assumption could be relaxed without changing the essence of our results, at the cost of additional technical notation.
We further assume that the weights satisfy $\gamma_{\mathfrak{v}} \le \gamma_{\mathfrak{w}}$ whenever $\mathfrak{w}\subseteq \mathfrak{v}$, a property we refer to as being \emph{downward-closed}.

For a collection of weights $\bsgamma=(\gamma_{\mathfrak{u}})_{\mathfrak{u}\subseteq\{1{:}d\}}$ and a real number $\alpha>1/2$, we define, for $\bsh=(h_1,\ldots,h_d)\in \ZZ^d$,
\[ 
r_{2\alpha,\bsgamma} (\bsh):=\gamma_{\supp(\bsh)}^{-1}\prod_{j\in \supp(\bsh)} \abs{h_j}^{2\alpha},
\]
where $\supp(\bsh)=\{j\in \{1{:}d\}\, \colon\, h_j\neq 0\}$.

We now define the weighted Korobov space $\Hcal_{d,\alpha,\bsgamma}$, which is a reproducing kernel Hilbert space consisting of one-periodic functions on $[0,1)^d$, where periodicity is understood with respect to each variable. The norm in $\Hcal_{d,\alpha,\bsgamma}$ is defined in terms of the Fourier coefficients by
\[
  \|f\|_{d,\alpha,\bsgamma}^2:=\sum_{\bsh\in\ZZ^d} \abs{\widehat{f}({\bsh})}^2 r_{2\alpha,\bsgamma}(\bsh).
\]
Note that $\Hcal_{d,\alpha,\bsgamma}$ is a subspace of $L_2 ([0,1)^d)$, and that all elements of the Korobov space with $\alpha>1/2$ can be represented pointwise by their Fourier series. Moreover, it is known that if $\alpha\in\NN$, the univariate Korobov (periodic Sobolev) space that serves as the building block of $\Hcal_{d,\alpha,\bsgamma}$ consists precisely of one-periodic functions on $[0,1)$ whose derivatives up to order $\alpha-1$ are absolutely continuous and whose $\alpha$-th derivative belongs to $L_2([0,1))$. For this reason, $\alpha$ is called the smoothness parameter. In the classical product-weight setting (i.e., when $\gamma_{\mathfrak{u}}=\prod_{j\in\mathfrak{u}}\gamma_j$), the $d$-dimensional space $\Hcal_{d,\alpha,\bsgamma}$ is the $d$-fold tensor product of these univariate spaces. For general weights $\bsgamma$, the space is defined by the Fourier-weighted norm above; see \cite{dick2022component} for further details.

\subsection{$L_2$-approximation}
We will study $L_2$-approximation of functions $f\in \Hcal_{d,\alpha,\bsgamma}$. More formally, this means that we study the approximation of the \textit{embedding
operator} $S:\Hcal_{d,\alpha,\bsgamma}\to L_2 ([0,1)^d)$, $S(f)=f$. For a given deterministic approximation algorithm $A:\Hcal_{d,\alpha,\bsgamma}\to L_2 ([0,1)^d)$, a common error measure considered in the literature is the worst-case error,
\[
  \mathrm{err}(\Hcal_{d,\alpha,\bsgamma}, L_2, A):= \sup_{\substack{f\in \Hcal_{d,\alpha,\bsgamma}\\ 
  \norm{f}_{d,\alpha,\bsgamma}\le 1}}
  \|f-A(f)\|_{L_2},
\]
i.e., one considers the worst performance of $A$ across the unit ball of $\Hcal_{d,\alpha,\bsgamma}$. 

In this paper, we will deal with \textit{randomized algorithms} instead of deterministic algorithms. When we speak of a randomized approximation algorithm, we refer to a pair consisting of a probability space $(\Omega, \Sigma, \mu)$ and a family of mappings $A = (A^\omega)_{\omega \in \Omega}$, where each $A^\omega$ is a deterministic approximation algorithm for fixed $\omega \in \Omega$. 
In the randomized setting, a common error measure is the worst-case root-mean-squared error, also referred to as the \emph{randomized error}, defined by
\begin{align*}
  \mathrm{err}^{\mathrm{ran}}(\Hcal_{d,\alpha,\bsgamma}, L_2, (A^{\omega})):= \sup_{\substack{f\in \Hcal_{d,\alpha,\bsgamma}\\ 
  \norm{f}_{d,\alpha,\bsgamma}\le 1}}
  \left(\EE_{\omega}\left[\|f-A^{\omega}(f)\|^2_{L_2}\right]\right)^{1/2}.
\end{align*}
Alternatively, one can consider the $(\epsilon,\delta)$-approximation framework, see for instance \cite{kunsch2019optimal}, which asks for which pairs $(\epsilon,\delta)$ the algorithm satisfies
\begin{align}\label{eq:eps-delta_framework} 
\mathrm{Pr}_{\omega}\left[\|f-A^{\omega}(f)\|_{L_2}>\epsilon \right]\le \delta \qquad \text{for all $f$ with $\norm{f}_{d,\alpha,\bsgamma}\le 1$.} 
\end{align}

Our main result (see Corollary~\ref{thm:combined_error} below) is that we can reach a probabilistic error bound of order $M^{-\alpha}$ when using $M$ function evaluations with high probability, by using a median algorithm (see Section \ref{sec:median} for the precise definition), where the median is taken over $R$ randomized instances of QMC rules. Here, $M$ is of the form $RN$, where $N$ is the number of integration nodes used in each single QMC rule. The essential progress in comparison to our previous result in \cite{PKG24} is that the algorithm will not be depending on the parameters $\alpha$ and $\bsgamma$ of $\Hcal_{d,\alpha,\bsgamma}$, and thus can be considered as much more universal. 

\subsection{Rank-1 lattice rules}
Let us now outline which type of QMC rules we use in our median algorithm, which are rank-1 lattice rules. For simplicity, let us assume that $N$ is a prime in the following (assuming that $N$ is prime guarantees that all one-dimensional projections of a lattice point set are evenly distributed, which is an advantage. If we would allow composite $N$, we presumably could derive similar results, but would need more technical notation). Let $\bsz=(z_1,\ldots,z_d)$ be a 
vector with each component in $\{1{:}(N-1)\}$. We can then define a 
lattice point set $\Pcal_N$ with points $\bsx_0,\ldots,\bsx_{N-1}$ as follows. For $j\in\{1{:}d\}$ and $k\in\{0,\ldots,N-1\}$, the $j$-th component of $\bsx_k$ is given by
\[
x_k^{(j)}:=\left\{\frac{k z_j}{N}\right\},
\]
where $\{y\}=y-\lfloor y \rfloor$ denotes the fractional part of a real number $y$. By repeating the above procedure for all $j\in\{1{:}d\}$ and $k\in\{0,\ldots,N-1\}$, we obtain the full lattice point set $\Pcal_N$. A QMC rule using $\Pcal_N$ is called a (rank-1) lattice rule. 

Note that, for fixed $N$ and $d$, a lattice rule is fully characterized by the choice of the \textit{generating vector} $\bsz$. Not all choices of $\bsz$ yield lattice rules that have sufficient quality to be usefully employed in integration or approximation algorithms. However, there are fast construction algorithms available that return good generating vectors for given $N$, $d$, $\alpha$, and $\bsgamma$. We refer to the books \cite{dick2022lattice, sloan1994lattice} for overviews of the theory of lattice rules, and in particular to \cite[Chapters 3 and 4]{dick2022lattice} for constructions of good lattice rules. In the present paper, however, we will make a random choice of the generating vectors $\bsz$ to obtain the lattice point sets used. Therefore, we need not be concerned with construction algorithms here, which is a general computational advantage of median rules. 

It is sometimes useful, and we shall also do so here, to introduce an additional random element when applying lattice rules. This is commonly achieved by a \emph{random shift}, $\Delta\in [0,1)^d$. Given a lattice point set $\Pcal_N=\{\bsx_0,\ldots,\bsx_{N-1}\}$ and drawing $\bsDelta$ from a uniform distribution over $[0,1)^d$, the corresponding \emph{randomly shifted lattice rule} applied to a function $g$ is then given by 
\[
   Q_{N,d,\bsDelta}(g):=\frac{1}{N}\sum_{k=0}^{N-1}g \left(\left\{\bsx_k + \bsDelta\right\}\right),
\]
i.e., all points of $\Pcal_N$ are shifted modulo one by the same $\bsDelta$.
If we would like to emphasize the role of the generating vector $\bsz$ of $\Pcal_N$, we also write $Q_{N,d,\bsz,\bsDelta}$ in the following. 

In the subsequent section, we will give our median algorithm based on lattice rules and analyze the corresponding approximation error.

\section{The median algorithm and its error}\label{sec:median}

As outlined in the introduction, we now turn to the definition of the \emph{median lattice-based algorithm}, which will be used for $L_2$-approximation in the weighted Korobov space $\Hcal_{d,\alpha,\bsgamma}$. Throughout this section, we assume downward-closed weights $\bsgamma$, that is, $0<\gamma_{\mathfrak{v}} \le \gamma_{\mathfrak{w}}\le 1$ whenever $\mathfrak{w}\subseteq \mathfrak{v}\subseteq \{1{:}d\}$, and a smoothness parameter $\alpha>1/2$. The parameters $\bsgamma$ and $\alpha$ are assumed to be fixed with these properties but otherwise arbitrary.

In this section, we first introduce the universal median lattice-based algorithm. We then state the main result of this paper, Theorem~\ref{thm:combined_error}, which provides a probabilistic error bound for the algorithm. The proof of this result relies on the error decomposition given in \eqref{eqn:errordecom}, while the detailed analysis of the individual terms in the decomposition is deferred to the next section. Finally, we discuss the tractability properties of the median lattice-based algorithm.

\subsection{The universal median lattice-based algorithm}
For a real number $L \geq 0$, we define
\[
\Acal_{d}(L) := \left\{ \bsh \in \ZZ^d : \prod_{j\in \supp(\bsh)} \abs{h_j} < L \right\},
\]
and
\[
\Acal_{d,\alpha,\bsgamma}(L) := \left\{ \bsh \in \ZZ^d : r_{2\alpha, \bsgamma}(\bsh) < L^{2\alpha} \right\}=\left\{ \bsh \in \ZZ^d : r_{1, \bsgamma^{1/(2\alpha)}}(\bsh) < L\right\},
\]
where $\bsgamma^{1/(2\alpha)} := (\gamma_{\fraku}^{1/(2\alpha)})_{\fraku \subseteq \{1{:}d\}}$.
Note that $\Acal_{d,\alpha,\bsgamma}(L) \subseteq \Acal_{d}(L)$, since $\sup_{\fraku \subseteq \{1{:}d\}} \gamma_{\fraku} \leq 1$ and $2\alpha > 1$.

For an odd positive integer $R$, we define the median of $R$ complex numbers $Z_1,\dots,Z_R$ by
\[
  \operatorname*{median}_{\substack{r\in\{1{:}R\}}} (Z_r)
  := \operatorname*{median}_{\substack{r\in\{1{:}R\}}} \Re(Z_r)
     + \icomp \,\cdot\, \operatorname*{median}_{\substack{r\in\{1{:}R\}}} \Im(Z_r),
\]
where $\Re(Z_r)$ and $\Im(Z_r)$ denote the real and imaginary parts of $Z_r$, respectively.  

In the following, we also assume that $N$ is an odd prime number. Although this requirement is not necessary for all results presented below, it is convenient to make this assumption from the outset, as some statements do rely on it.

\begin{algorithm}\label{alg:median}
    For given $R, d \in \NN$ and an odd prime $N$, perform the following steps:
    \begin{enumerate}
        \item \textbf{For} $r = 1, \dots, R$, do:
        \begin{enumerate}
            \item Randomly draw $\bsz_{r}$ from the uniform distribution over $\{1{:}(N-1)\}^d$.
            \item Randomly draw $\bsDelta_r$ from the uniform distribution over $[0,1)^d$.
            \item For $\bsh\in\ZZ^d$, define
        \[
        \widehat{f}_{N,\bsz_r,\bsDelta_r}(\bsh) :=
        \frac{1}{N} \sum_{k=0}^{N-1} 
        f\left( \left\{ \frac{k \bsz_r}{N}+\bsDelta_r \right\} \right)
        \exp\left(-2\pi \icomp \, \bsh \cdot \left( \frac{k \bsz_r}{N}+\bsDelta_r\right) \right).
        \]
        \end{enumerate}
        \item Order $\bsh \in \Acal_{d}(N/2)$ such that the medians of 
    $|\widehat{f}_{N,\bsz_r,\bsDelta_r}(\bsh)|^2$ over $r\in \{1{:}R\}$ 
    are in descending order, i.e.,
    \[
      \operatorname*{median}_{\substack{r\in\{1{:}R\}}} 
      |\widehat{f}_{N,\bsz_r,\bsDelta_r}(\bsh_1)|^2
      \ge 
      \operatorname*{median}_{\substack{r\in\{1{:}R\}}} 
      |\widehat{f}_{N,\bsz_r,\bsDelta_r}(\bsh_2)|^2
      \ge \cdots
    \]
    and set $K_N = \{\bsh_1, \dots, \bsh_N\}$.
    
       \item Define the median estimator
    \[
      \widehat{f}_{N,\bsz_{\{1{:}R\}},\bsDelta_{\{1{:}R\}}}(\bsh) :=
      \operatorname*{median}_{\substack{r\in\{1{:}R\}}} 
      \widehat{f}_{N,\bsz_r,\bsDelta_r}(\bsh),
    \]
    and the corresponding approximation
    \[
      (A^{\rand}_{N,\bsz_{\{1{:}R\}},\bsDelta_{\{1{:}R\}}}(f))(\bsx)
      := \sum_{\bsh \in K_N} 
      \widehat{f}_{N,\bsz_{\{1{:}R\}},\bsDelta_{\{1{:}R\}}}(\bsh)
      \exp (2 \pi \icomp \, \bsh \cdot \bsx),
    \]
    where $\bsz_{\{1{:}R\}}$ and $\bsDelta_{\{1{:}R\}}$ denote the collections 
    of $\bsz_r$ and $\bsDelta_r$, $r = 1, \dots, R$, respectively.
\end{enumerate}
\end{algorithm}

\begin{remark}
The key difference from the algorithm proposed in \cite{PKG24} is that the present method does not require prior specification of the smoothness parameter $\alpha$ or the weights $\bsgamma$. 
By considering the enlarged hyperbolic cross index set $\Acal_d(N/2)$ and sorting the estimated Fourier coefficients in descending order, the algorithm automatically adapts to the underlying smoothness and weights of the function.
\end{remark}

\begin{remark}\label{rem:evaluation_vs_work}
The total number of function evaluations is $M = R N$. 
Since we compute $\widehat{f}_{N,\bsz_r,\bsDelta_r}(\bsh)$ for all $\bsh \in \Acal_{d}(N/2)$, an additional computational effort of order $O(|\Acal_{d}(N/2)|)$ is required. 
It is known that $|\Acal_{d}(N/2)| \asymp N (\log N)^{d-1}$ (see, for instance, \cite[Chapter~2]{DTU18}), so that the associated computational cost can become significant for large $d$. 
Such an exponential dependence of the work on the dimension $d$ does not occur in the previous method studied in \cite{PKG24}.
This highlights a trade-off between achieving universality and controlling the computational budget.
\end{remark}

\subsection{Error analysis: the main result}

In this subsection, we first establish a few basic lemmas and then state the main result of this paper, Theorem~\ref{thm:combined_error}, which provides a probabilistic bound on the $L_2$-approximation error. This result is based on the error decomposition given in \eqref{eqn:errordecom}. The detailed derivation of the bounds for the individual terms in this decomposition is deferred to the next section.

We first estimate the cardinality of 
$\Acal_{d,\alpha,\bsgamma}(L)$. The following result was shown in \cite{CKNS20}.
\begin{lemma} \label{lem:calA_bound}
Let $\bsgamma=(\gamma_{\fraku})_{\fraku\subseteq \{1{:}d\}}$ 
be arbitrary positive weights. Then, for $\alpha>1/2$ and $\lambda > 1/(2\alpha)$,
\[
\abs{\Acal_{d,\alpha,\bsgamma}(L)} \leq 
T_{\alpha,\lambda}(\bsgamma)\, L^{2 \alpha\lambda} ,
\]
where 
\[
T_{\alpha,\lambda}(\bsgamma):=\sum_{\fraku\subseteq \{1:d\}} 
\gamma_{\fraku}^\lambda\, (2\zeta (2\alpha\lambda))^{\abs{\fraku}},
\]
and $\zeta(q):=\sum_{n=1}^\infty n^{-q}$ denotes the Riemann zeta function.
\end{lemma}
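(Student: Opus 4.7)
The plan is a standard Markov/Chernoff style averaging argument. Since every element of $\Acal_{d,\alpha,\bsgamma}(L)$ satisfies $r_{2\alpha,\bsgamma}(\bsh) < L^{2\alpha}$, the ratio $L^{2\alpha}/r_{2\alpha,\bsgamma}(\bsh)$ exceeds $1$, so for any $\lambda>0$ I can use the trivial inequality
\[
\mathbf{1}_{r_{2\alpha,\bsgamma}(\bsh) < L^{2\alpha}} \;\le\; \left(\frac{L^{2\alpha}}{r_{2\alpha,\bsgamma}(\bsh)}\right)^{\lambda}.
\]
Note that $r_{2\alpha,\bsgamma}(\bsh)$ is strictly positive for every $\bsh\in\ZZ^d$ (including $\bsh=\bszero$, where $r_{2\alpha,\bsgamma}(\bszero)=\gamma_{\emptyset}^{-1}$), so the right-hand side is well defined. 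Summing this bound over all $\bsh\in\ZZ^d$ gives
\[
\abs{\Acal_{d,\alpha,\bsgamma}(L)} \;\le\; L^{2\alpha\lambda}\sum_{\bsh\in\ZZ^d} r_{2\alpha,\bsgamma}(\bsh)^{-\lambda}.
\]

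Next I would split the remaining sum according to the support $\fraku=\supp(\bsh)$. For $\bsh$ with $\supp(\bsh)=\fraku$, we have
\[
r_{2\alpha,\bsgamma}(\bsh)^{-\lambda} = \gamma_{\fraku}^{\lambda}\prod_{j\in\fraku}\abs{h_j}^{-2\alpha\lambda},
\]
and the coordinates with $j\notin\fraku$ contribute the single value $h_j=0$ while those with $j\in\fraku$ range over $\ZZ\setminus\{0\}$ independently. Hence the sum factorises as
\[
\sum_{\bsh\in\ZZ^d} r_{2\alpha,\bsgamma}(\bsh)^{-\lambda} = \sum_{\fraku\subseteq\{1{:}d\}} \gamma_{\fraku}^{\lambda}\,\prod_{j\in\fraku}\Bigl(\sum_{h\in\ZZ\setminus\{0\}} \abs{h}^{-2\alpha\lambda}\Bigr) = \sum_{\fraku\subseteq\{1{:}d\}}\gamma_{\fraku}^{\lambda}\,(2\zeta(2\alpha\lambda))^{\abs{\fraku}},
\]
where the inner one-dimensional sum equals $2\zeta(2\alpha\lambda)$ and is finite precisely because of the hypothesis $\lambda>1/(2\alpha)$, which ensures $2\alpha\lambda>1$. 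Combining the two displays yields the claim with the stated constant $T_{\alpha,\lambda}(\bsgamma)$.

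There is really no hard step here; the only thing to be careful about is the convergence of the one-dimensional zeta sum, which is exactly what forces the condition $\lambda>1/(2\alpha)$ and explains the form of the admissible parameter range. The downward-closedness assumption on the weights is not needed for this lemma, which is why it is stated for \emph{arbitrary} positive weights.
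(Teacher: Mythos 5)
Your proof is correct: the indicator bound $\mathbf{1}\{r_{2\alpha,\bsgamma}(\bsh)<L^{2\alpha}\}\le (L^{2\alpha}/r_{2\alpha,\bsgamma}(\bsh))^{\lambda}$, the factorization over supports, and the convergence condition $2\alpha\lambda>1$ are all handled properly, and you rightly note that downward-closedness is not needed. The paper itself gives no proof but cites \cite{CKNS20}, and your argument is exactly the standard Markov-type bound used there, so it matches the intended approach.
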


Next, for an odd prime $N$, we define
\begin{align}\label{eq:N_star_def} 
    N_* := \sup \left\{ L \ge 0 : |\Acal_{d,\alpha,\bsgamma}(L)| \le \frac{N-1}{16} \right\}.
\end{align}
Since $|\Acal_{d,\alpha,\bsgamma}(0)| = 0$, the above set is nonempty and thus $N_* > 0$. 
By definition, we then have
\begin{equation}\label{eqn:Nstar}
   |\Acal_{d,\alpha,\bsgamma}(N_*)| \le \frac{N-1}{16}.
\end{equation}
Note that $N_* < N/2$ because there are already $N$ integers between $-N/2$ and $N/2$. 
We also have the following lower bound on $N_*$:

\begin{lemma}\label{lem:Nstarlowerbound}
For an odd prime $N$, $\alpha>1/2$, $\lambda > 1/(2\alpha)$, and positive weights $\bsgamma=(\gamma_{\fraku})_{\fraku\subseteq \{1{:}d\}}$,
\[
N_* \ge \left( \frac{N-1}{16\, T_{\alpha,\lambda}(\bsgamma)} \right)^{1/(2\alpha\lambda)}.
\]
\end{lemma}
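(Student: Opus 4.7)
The proof is essentially a direct inversion of the counting bound in Lemma~\ref{lem:calA_bound}. My plan is to choose a specific value of $L$ that makes the upper bound from that lemma equal to the threshold $(N-1)/16$ appearing in the definition of $N_*$, and then argue this value of $L$ lies in the set over which the supremum is taken.

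Concretely, I would set
\[
L_0 := \left( \frac{N-1}{16\, T_{\alpha,\lambda}(\bsgamma)} \right)^{1/(2\alpha\lambda)},
\]
which is well-defined and positive since $T_{\alpha,\lambda}(\bsgamma) > 0$ and $N \ge 3$. By Lemma~\ref{lem:calA_bound} applied with $L = L_0$ (noting that the hypothesis $\lambda > 1/(2\alpha)$ is exactly what is assumed), one obtains
\[
\abs{\Acal_{d,\alpha,\bsgamma}(L_0)} \le T_{\alpha,\lambda}(\bsgamma)\, L_0^{2\alpha\lambda} = T_{\alpha,\lambda}(\bsgamma)\cdot \frac{N-1}{16\, T_{\alpha,\lambda}(\bsgamma)} = \frac{N-1}{16}.
\]
This shows $L_0$ belongs to the set $\{L \ge 0 : \abs{\Acal_{d,\alpha,\bsgamma}(L)} \le (N-1)/16\}$ whose supremum defines $N_*$, so $N_* \ge L_0$, which is exactly the claimed inequality.

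There is essentially no hard step here: the proof is a one-line consequence of Lemma~\ref{lem:calA_bound} combined with the definition \eqref{eq:N_star_def}. The only minor subtlety worth spelling out is that $N_*$ is a supremum rather than a maximum, so one might worry whether the witnessing value $L_0$ is actually attained; however, since we only need a lower bound on $N_*$ and we have exhibited a point $L_0$ in the defining set, the supremum bound follows immediately without requiring attainment. I would present the argument in a compact paragraph of three or four lines.
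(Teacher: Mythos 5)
Your proposal is correct and uses essentially the same idea as the paper: inverting the counting bound of Lemma~\ref{lem:calA_bound} against the threshold $(N-1)/16$ in the definition \eqref{eq:N_star_def} of $N_*$. The paper phrases it in the opposite direction---for any $L>N_*$ it notes $\tfrac{N-1}{16}<|\Acal_{d,\alpha,\bsgamma}(L)|\le T_{\alpha,\lambda}(\bsgamma)\,L^{2\alpha\lambda}$ and then lets $L\downarrow N_*$---whereas you exhibit the candidate $L_0$ directly as a member of the defining set; both are equally valid ways to conclude $N_*\ge L_0$.
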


\begin{proof} 
Let $L > N_*$. By Lemma~\ref{lem:calA_bound} and the definition of $N_*$, we have
\[
\frac{N-1}{16} < |\Acal_{d,\alpha,\bsgamma}(L)| \le T_{\alpha,\lambda}(\bsgamma)\, L^{2 \alpha \lambda}.
\]
Taking the limit $L \downarrow N_*$ then yields the desired bound.
\end{proof}

Intuitively, $\Acal_{d,\alpha,\bsgamma}(N_*)$ contains all the large Fourier coefficients that we want our algorithm to estimate and include in $K_N$. 
The next lemma provides a bound on the error arising from leaving out frequencies that are not in $\Acal_{d,\alpha,\bsgamma}(N_*)$.

\begin{lemma}\label{lem:sumnotinL} 
For $L>0$ and $f \in \Hcal_{d,\alpha,\bsgamma}$,
    $$\sum_{\bsh \in \ZZ^d\setminus \Acal_{d,\alpha,\bsgamma}(L)}|\widehat{f}(\bsh)|^2\leq \frac{\|f\|^2_{d,\alpha,\bsgamma}}{L^{2\alpha} }.$$
\end{lemma}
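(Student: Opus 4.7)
The plan is to prove this by a direct weight-then-sum argument, exploiting the fact that the complement of $\Acal_{d,\alpha,\bsgamma}(L)$ is precisely the set of frequencies whose weight $r_{2\alpha,\bsgamma}(\bsh)$ dominates $L^{2\alpha}$.

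First, I would unpack the definition: by construction $\Acal_{d,\alpha,\bsgamma}(L)=\{\bsh\in\ZZ^d: r_{2\alpha,\bsgamma}(\bsh)<L^{2\alpha}\}$, so every $\bsh\in\ZZ^d\setminus\Acal_{d,\alpha,\bsgamma}(L)$ satisfies $r_{2\alpha,\bsgamma}(\bsh)\geq L^{2\alpha}$, equivalently $1\leq r_{2\alpha,\bsgamma}(\bsh)/L^{2\alpha}$. Multiplying $|\widehat{f}(\bsh)|^2$ by this non-negative factor and summing gives
\[
\sum_{\bsh\in\ZZ^d\setminus\Acal_{d,\alpha,\bsgamma}(L)}|\widehat{f}(\bsh)|^2\;\leq\;\frac{1}{L^{2\alpha}}\sum_{\bsh\in\ZZ^d\setminus\Acal_{d,\alpha,\bsgamma}(L)}|\widehat{f}(\bsh)|^2\, r_{2\alpha,\bsgamma}(\bsh).
\]

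Next, I would extend the sum on the right to all of $\ZZ^d$, which is legitimate because every term is non-negative (the weights $r_{2\alpha,\bsgamma}(\bsh)$ are positive). The extended sum is exactly $\|f\|^2_{d,\alpha,\bsgamma}$ by the definition of the Korobov norm, yielding the claimed bound
\[
\sum_{\bsh\in\ZZ^d\setminus\Acal_{d,\alpha,\bsgamma}(L)}|\widehat{f}(\bsh)|^2\;\leq\;\frac{\|f\|^2_{d,\alpha,\bsgamma}}{L^{2\alpha}}.
\]

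There is no substantive obstacle here; the lemma is a one-line Chebyshev/Markov-type tail estimate in the weighted Fourier norm. The only thing worth double-checking in writing is that $r_{2\alpha,\bsgamma}(\bsh)>0$ for all $\bsh$ (guaranteed by $\gamma_\fraku>0$), so that replacing $1$ by $r_{2\alpha,\bsgamma}(\bsh)/L^{2\alpha}$ on the complement and then enlarging the summation index set both preserve the inequality.
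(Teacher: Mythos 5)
Your proposal is correct and is essentially identical to the paper's proof: both bound $1\le r_{2\alpha,\bsgamma}(\bsh)/L^{2\alpha}$ on the complement of $\Acal_{d,\alpha,\bsgamma}(L)$ and then enlarge the sum to all of $\ZZ^d$ to recover $\|f\|^2_{d,\alpha,\bsgamma}$. No gaps; your extra remark on positivity of the weights is a fine but unnecessary precaution.
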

\begin{proof}
We have
\[
\sum_{\bsh \in \ZZ^d \setminus \Acal_{d,\alpha,\bsgamma}(L)} |\widehat{f}(\bsh)|^2
\le \sum_{\bsh \in \ZZ^d \setminus \Acal_{d,\alpha,\bsgamma}(L)} 
|\widehat{f}(\bsh)|^2 \frac{r_{2\alpha, \bsgamma}(\bsh)}{L^{2\alpha}}
\le \frac{\|f\|^2_{d,\alpha,\bsgamma}}{L^{2\alpha}}. \qedhere
\]
\end{proof}

In the following, we write 
\[
N \cdot \ZZ^d := \{\bsl \in \ZZ^d : \ell_j \equiv 0 \tmod N \text{ for all } j \in \{1{:}d\}\}.
\]
The next result is an immediate corollary of Lemma~\ref{lem:sumnotinL}.
\begin{corollary}\label{cor:sumnotinN}
For an odd prime $N$ and $f \in \Hcal_{d,\alpha,\bsgamma}$,
\[
\sum_{\bsh \in \mathcal{A}_d(N/2)} \sum_{\bsl \in N \cdot \ZZ^d \setminus \{\bszero\}} 
|\widehat{f}(\bsh+\bsl)|^2
\le \frac{\|f\|^2_{d,\alpha,\bsgamma}}{N_*^{2\alpha}}.
\]
\end{corollary}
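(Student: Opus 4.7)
The plan is to rewrite the double sum as a single sum over the values $\bsk = \bsh + \bsl$, verify that no lattice point is counted twice, show the resulting index set lies outside $\Acal_{d,\alpha,\bsgamma}(N_*)$, and then invoke Lemma~\ref{lem:sumnotinL}.

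\textbf{Step 1 (Injectivity of $(\bsh,\bsl)\mapsto\bsh+\bsl$).} I would first observe that every $\bsh\in\Acal_d(N/2)$ satisfies $|h_j|<N/2$ componentwise: indeed if $h_j\neq 0$ then $|h_j|\leq \prod_{i\in\supp(\bsh)}|h_i|<N/2$, and otherwise $h_j=0$. Hence if $\bsh_1+\bsl_1=\bsh_2+\bsl_2$ with $\bsh_i\in\Acal_d(N/2)$ and $\bsl_i\in N\cdot\ZZ^d$, then $\bsh_1-\bsh_2\in N\cdot\ZZ^d$ has all entries of absolute value less than $N$, forcing $\bsh_1=\bsh_2$ and $\bsl_1=\bsl_2$. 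Thus the double sum equals $\sum_{\bsk\in S}|\widehat f(\bsk)|^2$ where $S:=\{\bsh+\bsl:\bsh\in\Acal_d(N/2),\ \bsl\in N\cdot\ZZ^d\setminus\{\bszero\}\}$.

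\textbf{Step 2 (Containment $S\subseteq \ZZ^d\setminus\Acal_{d,\alpha,\bsgamma}(N_*)$).} Given any $\bsk=\bsh+\bsl\in S$, pick an index $j_0$ with $\ell_{j_0}\neq 0$; then $|\ell_{j_0}|\ge N$ and the bound from Step~1 gives
\[
|k_{j_0}|=|h_{j_0}+\ell_{j_0}|\ge |\ell_{j_0}|-|h_{j_0}|>N-N/2=N/2.
\]
In particular $j_0\in\supp(\bsk)$, and since $|k_j|\ge 1$ for every $j\in\supp(\bsk)$, together with the standing assumption $\gamma_\fraku\le 1$,
\[
r_{2\alpha,\bsgamma}(\bsk)=\gamma_{\supp(\bsk)}^{-1}\prod_{j\in\supp(\bsk)}|k_j|^{2\alpha}\ge |k_{j_0}|^{2\alpha}>(N/2)^{2\alpha}>N_*^{2\alpha},
\]
where the last inequality uses $N_*<N/2$ noted just after \eqref{eqn:Nstar}. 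Hence $\bsk\notin\Acal_{d,\alpha,\bsgamma}(N_*)$.

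\textbf{Step 3 (Conclusion).} Combining Steps~1 and~2,
\[
\sum_{\bsh\in\Acal_d(N/2)}\sum_{\bsl\in N\cdot\ZZ^d\setminus\{\bszero\}}|\widehat f(\bsh+\bsl)|^2=\sum_{\bsk\in S}|\widehat f(\bsk)|^2\le \sum_{\bsk\in\ZZ^d\setminus\Acal_{d,\alpha,\bsgamma}(N_*)}|\widehat f(\bsk)|^2,
\]
and Lemma~\ref{lem:sumnotinL} with $L=N_*$ bounds the right-hand side by $\|f\|_{d,\alpha,\bsgamma}^2/N_*^{2\alpha}$.

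The only mildly delicate point is Step~2, where the downward-closed assumption (entering via $\gamma_\fraku\le 1$) is used to discard the weight factor and the bound $N_*<N/2$ must be invoked; both are already in place, so the argument is essentially a direct reduction to Lemma~\ref{lem:sumnotinL}.
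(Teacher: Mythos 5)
Your proof is correct and follows essentially the same route as the paper's: disjointness of the shifted lattices $\bsh + N\cdot\ZZ^d$, the observation that a shifted frequency has one component of modulus exceeding $N/2$ and hence lies outside the relevant index set, and then an application of Lemma~\ref{lem:sumnotinL} combined with $N_*<N/2$. The only cosmetic differences are that you apply the lemma with $L=N_*$ after verifying directly that $\bsh+\bsl\notin\Acal_{d,\alpha,\bsgamma}(N_*)$, whereas the paper applies it with $L=N/2$ and invokes $N_*<N/2$ only at the end, and that the bound $\gamma_{\fraku}\le 1$ you use is the paper's standing assumption on the weights rather than a consequence of downward-closedness.
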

\begin{proof}
    Since $\bsh \in \mathcal{A}_d(N/2)$ implies $\bsh \in (-N/2,N/2)^d$ for an odd prime $N$, the shifted lattices $\bsh + N \cdot \ZZ^d$ are mutually disjoint for different $\bsh \in \mathcal{A}_d(N/2)$.  Moreover, for any $\bsh \in \mathcal{A}_d(N/2)$ and $\bsl \in N \cdot \ZZ^d \setminus \{\bszero\}$, we have $\bsh + \bsl \notin \mathcal{A}_d(N/2)$.  Hence, by Lemma~\ref{lem:sumnotinL},
    \[
\sum_{\bsh \in \mathcal{A}_d(N/2)} \sum_{\bsl \in N \cdot \ZZ^d \setminus \{\bszero\}} 
|\widehat{f}(\bsh+\bsl)|^2
\le \sum_{\bsh \in \ZZ^d \setminus \Acal_d(N/2)} |\widehat{f}(\bsh)|^2
\le \sum_{\bsh \in \ZZ^d \setminus \Acal_{d,\alpha,\bsgamma}(N/2)} |\widehat{f}(\bsh)|^2
\le \frac{\|f\|^2_{d,\alpha,\bsgamma}}{(N/2)^{2\alpha}}.
\]
The result then follows from the fact that $N_* < N/2$.
\end{proof}

To derive a probabilistic error bound for the algorithm $A^{\rand}_{N,\bsz_{\{1{:}R\}},\bsDelta_{\{1{:}R\}}}$, we introduce some 
additional notation and an auxiliary set of indices. For the set $K_N$ generated by our algorithm, define
\[
f_{K_N} := \sum_{\bsh \in K_N} \widehat{f}(\bsh) \exp(2 \pi \icomp \bsh \cdot \bsx).
\]
Moreover, consider an alternative approximation to $f$ using an oracle with access 
to all Fourier coefficients of $f$. For $\bsh' \in \mathcal{A}_d(N/2)$, define
\begin{align}\label{eq:def_F_bsh'}
F(\bsh') := \sum_{\bsl \in N \cdot \ZZ^d} |\widehat{f}(\bsh' + \bsl)|^2.
\end{align}
Ordering the elements $\bsh'$ of $\mathcal{A}_d(N/2)$ so that 
$F(\bsh'_1) \ge F(\bsh'_2) \ge \cdots$, we set 
$K'_N = \{\bsh'_1, \dots, \bsh'_N\}$.

The main idea of the remaining proof is to use the triangle inequality to obtain an upper bound on the squared error:
\begin{align}
& \|f - A^{\rand}_{N,\bsz_{\{1{:}R\}},\bsDelta_{\{1{:}R\}}}(f)\|^2_{L_2} \notag \\
&\le \Big(\|f_{K_N} - A^{\rand}_{N,\bsz_{\{1{:}R\}},\bsDelta_{\{1{:}R\}}}(f)\|_{L_2} 
+ \|f - f_{K_N}\|_{L_2}\Big)^2 \notag \\
&\le 2 \|f_{K_N} - A^{\rand}_{N,\bsz_{\{1{:}R\}},\bsDelta_{\{1{:}R\}}}(f)\|^2_{L_2} 
+ 2 \|f - f_{K_N}\|^2_{L_2} \notag \\
&= 2 \sum_{\bsh \in K_N} 
\left| \widehat{f}_{N,\bsz_{\{1{:}R\}},\bsDelta_{\{1{:}R\}}}(\bsh) - \widehat{f}(\bsh) \right|^2
+ 2 \sum_{\bsh \in \ZZ^d \setminus K_N} |\widehat{f}(\bsh)|^2 \notag \\
&\le 2 \sum_{\bsh \in K_N} 
\left| \widehat{f}_{N,\bsz_{\{1{:}R\}},\bsDelta_{\{1{:}R\}}}(\bsh) - \widehat{f}(\bsh) \right|^2
+ 2 \sum_{\bsh \in \ZZ^d \setminus K'_N} |\widehat{f}(\bsh)|^2
+ 2 \sum_{\bsh \in K'_N \setminus K_N} |\widehat{f}(\bsh)|^2, \tag{$\clubsuit$} \label{eqn:errordecom}
\end{align}
where we have used the orthonormality of the Fourier basis in the equality.

We will analyze the three summands in the last line separately in the next section.
The corresponding probabilistic bounds are established in Theorems~\ref{thm:L2rate_constants}, \ref{lem:fminusfK'}, and \ref{thm:K'minusK}, respectively.
By combining these results, we arrive at the following main theorem.

\begin{theorem}\label{thm:combined_error}
Let $f\in \Hcal_{d,\alpha,\bsgamma}$, and let $N$ be an odd prime and $R$ an odd natural number, chosen sufficiently large such that $\varepsilon_1+\varepsilon_2\leq \varepsilon$ for some $\varepsilon\in (0,1)$, where $\varepsilon_1$ and $\varepsilon_2$ are defined in \eqref{eq:choose_R_first} and \eqref{eq:choose_R_second}, respectively. Then, with probability at least $1-\varepsilon$, we have
\[
  \|f-A^{\rand}_{N,\bsz_{\{1{:}R\}},\bsDelta_{\{1{:}R\}}}(f)\|^2_{L_2}
  \le \frac{32\|f\|^2_{d,\alpha,\bsgamma}(2N-1)}{N^{2\alpha}_*(N-1)}
  \left(\frac{991N-255}{120(2N-1)}
  + \max\!\left(960,\sqrt{240 N^{4\alpha}_*\Bar{r}_{\mathrm{sum}}}\right)\right),
\]
where $N_*$ is as defined in \eqref{eq:N_star_def} and
\begin{align}\label{eq:def_r_sum}
\Bar{r}_{\mathrm{sum}}=16\left(\frac{4\alpha}{N-1}\int_{N_*}^\infty \frac{ |\Acal_{d,\alpha,\bsgamma}(L)|}{L^{4\alpha+1} }\rd L
  +  \frac{1}{N^{4\alpha}} 
    \sum_{\emptyset \neq \fraku \subseteq \{1{:}d\}} \gamma^{2}_{\fraku} \left(2^{1+4\alpha}\zeta(4\alpha)\right)^{\abs{\fraku}}\right).
\end{align}
\end{theorem}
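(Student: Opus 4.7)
The plan is to use the deterministic error decomposition \eqref{eqn:errordecom} as the backbone: since that inequality holds pointwise for every realization of the random draws $\bsz_{\{1{:}R\}}$ and $\bsDelta_{\{1{:}R\}}$, the whole task here reduces to invoking the three probabilistic bounds established in Theorems~\ref{thm:L2rate_constants}, \ref{lem:fminusfK'}, and \ref{thm:K'minusK}, and then gluing them together by a union bound. At this high level, no new analytical work is needed; the proposition is essentially a bookkeeping theorem.

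First, I would isolate the deterministic middle term $2\sum_{\bsh \in \ZZ^d \setminus K'_N}|\widehat{f}(\bsh)|^2$. Because $K'_N$ is selected using the oracle quantities $F(\bsh')$ in \eqref{eq:def_F_bsh'}, this sum depends only on $f$ and not on the algorithm's random draws; the bound from Theorem~\ref{lem:fminusfK'} applies unconditionally and produces a contribution of order $\|f\|^2_{d,\alpha,\bsgamma}/N_*^{2\alpha}$, which will supply (part of) the $991N - 255$ constant factor in the stated inequality, via Lemma~\ref{lem:sumnotinL} and Corollary~\ref{cor:sumnotinN}.

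Next, I would treat the two random terms. Theorem~\ref{thm:L2rate_constants} will yield, with probability at least $1-\varepsilon_1$, a bound on the coefficient estimation error $2\sum_{\bsh\in K_N}|\widehat{f}_{N,\bsz_{\{1{:}R\}},\bsDelta_{\{1{:}R\}}}(\bsh)-\widehat{f}(\bsh)|^2$ driven by the median-of-lattice concentration, whose size is governed by $\Bar{r}_{\mathrm{sum}}$ in \eqref{eq:def_r_sum}. Theorem~\ref{thm:K'minusK} will then bound the ranking discrepancy term $2\sum_{\bsh\in K'_N\setminus K_N}|\widehat{f}(\bsh)|^2$ with failure probability $\varepsilon_2$, reflecting that the only way the data-driven index set $K_N$ can differ from the oracle $K'_N$ is through misranking, which is controlled by the same median concentration but on a different event. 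I would apply the union bound over these two exceptional events and use $\varepsilon_1 + \varepsilon_2 \le \varepsilon$ to guarantee that all three bounds hold simultaneously with probability at least $1-\varepsilon$.

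Finally, I would add the three bounds and regroup the constants into the exact form stated. The main obstacle, as I foresee it, is purely arithmetic: tracking the numerical constants so that the sum collapses into the factor $\tfrac{32(2N-1)}{N-1}$ outside the parentheses, into the rational expression $\tfrac{991N-255}{120(2N-1)}$ arising from the deterministic summand, and into $\max(960, \sqrt{240 N_*^{4\alpha} \Bar{r}_{\mathrm{sum}}})$. The appearance of the maximum strongly suggests that Theorem~\ref{thm:L2rate_constants} (or perhaps Theorem~\ref{thm:K'minusK}) is stated as a two-regime bound depending on whether the variance-like term involving $\Bar{r}_{\mathrm{sum}}$ dominates a fixed absolute constant; the proof at this point just records which regime is active, multiplies through by the factor $2$ from \eqref{eqn:errordecom}, and divides by the normalizing factor $N_*^{2\alpha}$ coming from Lemma~\ref{lem:sumnotinL}. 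Beyond this constant tracking, no genuinely new estimate is produced in this theorem.
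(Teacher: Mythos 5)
Your proposal is correct and follows essentially the same route as the paper: the bound is obtained by taking the decomposition \eqref{eqn:errordecom} (which holds for every realization), inserting the deterministic bound of Theorem~\ref{lem:fminusfK'} for the middle term and the probabilistic bounds of Theorems~\ref{thm:L2rate_constants} and~\ref{thm:K'minusK} for the other two, applying a union bound so both hold with probability at least $1-\varepsilon_1-\varepsilon_2\ge 1-\varepsilon$, and collecting constants, which indeed yields $\tfrac{991N-255}{120(2N-1)}$ and the $\max\!\left(960,\sqrt{240N_*^{4\alpha}\Bar{r}_{\mathrm{sum}}}\right)$ term. The only slight slip is attributing the $\Bar{r}_{\mathrm{sum}}$-dependence to Theorem~\ref{thm:L2rate_constants}; it actually enters through the ranking-discrepancy bound of Theorem~\ref{thm:K'minusK}, but this does not affect the validity of your argument.
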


With the $(\epsilon,\delta)$-approximation framework \eqref{eq:eps-delta_framework} in mind, our randomized algorithm achieves an $(\epsilon,\delta)$-approximation with
\[
\epsilon^2 = \frac{32(2N-1)}{N^{2\alpha}_*(N-1)}
  \left(\frac{991N-255}{120(2N-1)}
  + \max\!\left(960,\sqrt{240 N^{4\alpha}_*\Bar{r}_{\mathrm{sum}}}\right)\right),
\]
for $\delta \le \varepsilon_1+\varepsilon_2 \in (0,1)$.

\begin{remark}

Given a failure probability tolerance $\varepsilon \in (0,1)$, the condition $\varepsilon_1+\varepsilon_2 \leq \varepsilon$ holds, where $\varepsilon_1$ and $\varepsilon_2$ are given in \eqref{eq:choose_R_first} and \eqref{eq:choose_R_second}, respectively, provided that
\[
N\left(\frac{1}{2}\right)^{R/2} 
+ \frac{|\mathcal{A}_d(N/2)|}{2}\left(\frac{1}{2}\right)^{R/2} 
+ \frac{N}{2}\left(\frac{3}{4}\right)^{R/2} \leq \varepsilon.
\]
This inequality is satisfied if
\[
N\left(\frac{3}{4}\right)^{R/2} \leq \frac{\varepsilon}{3} 
\quad \text{and} \quad  
|\mathcal{A}_d(N/2)|\left(\frac{1}{2}\right)^{R/2} \leq \varepsilon.
\]

The first condition holds whenever $R \geq 2\log (3N/\varepsilon)/(\log (4/3))$.
Next, since $\Acal_{d}(N/2)$ is a special instance of $\Acal_{d,\alpha,\bsgamma}(L)$, Lemma~\ref{lem:calA_bound} with $L=N/2$, $\lambda=1/\alpha$, and $\gamma_u=1$ for all $u\subseteq \{1{:}d\}$ implies
\[
|\Acal_{d}(N/2)| \leq \frac{N^2}{4}\,(1+2\zeta(2))^{d}.
\]
Thus, the second condition holds if
\[
R \;\geq\; 2\left( \frac{\log (3N^2)-\log (4\varepsilon)}{\log (2)} + \frac{\log(1+2\zeta(2))}{\log (2)}\,d\right).
\]
We conclude that the error bound in Theorem~\ref{thm:combined_error} fails with probability at most $\varepsilon$ if
\begin{equation}\label{eq:R_large_enough}
    R \geq 2\max\left( \frac{\log (3N/\varepsilon)}{\log (4/3)}, \; \frac{\log (3N^2)-\log (4\varepsilon)}{\log (2)} + \frac{\log (1+2\zeta(2))}{\log (2)}\,d \right).
\end{equation}
\end{remark}

\subsection{Tractability analysis}\label{subsec:tractability}

The probabilistic bound for the $L_2$-approximation error established in Theorem~\ref{thm:combined_error} still depends on the quantities $N_*$ and $\Bar{r}_{\mathrm{sum}}$, and thus does not yet yield an explicit error bound that is independent of these. In this subsection, we analyze these quantities under suitable assumptions and derive bounds that either depend only polynomially on the dimension $d$ or are independent of $d$. This, in turn, allows us to investigate the tractability properties of the universal median lattice-based algorithm.

First, let us consider the quantity $\Bar{r}_{\mathrm{sum}}$ defined in \eqref{eq:def_r_sum}.
By Lemma~\ref{lem:calA_bound}, for any $\lambda \in (1/(2\alpha),2)$ we obtain
\[ 
\int_{N^\ast}^\infty \frac{\abs{\Acal_{d,\alpha,\bsgamma}(L)}}{L^{4\alpha +1}}\rd L
\le  T_{\alpha,\lambda}(\bsgamma) \int_{N^\ast}^\infty \frac{1}{L^{4\alpha +1-2\alpha\lambda}}\rd L 
= \frac{T_{\alpha,\lambda}(\bsgamma)}{2\alpha(2-\lambda)}\, \frac{1}{N_*^{4\alpha-2\alpha\lambda}}. 
\]
This implies that
\[
\Bar{r}_{\mathrm{sum}} \le 
\frac{32}{2-\lambda}\,\frac{T_{\alpha,\lambda}(\bsgamma)}{(N-1)N_*^{4\alpha-2\alpha\lambda}}
  +  \frac{16}{N^{4\alpha}} 
    \sum_{\emptyset \neq \fraku \subseteq \{1:d\}} \gamma^2_{\fraku}
\left(2^{1+4\alpha}\zeta(4\alpha)\right)^{\abs{\fraku}}.
\]
Moreover, since $N_* < N/2$ always holds, we further obtain
\[
N^{4\alpha}_*\Bar{r}_{\mathrm{sum}} 
\le 
\frac{32}{2-\lambda}\, \frac{N^{2\alpha\lambda} T_{\alpha,\lambda} (\bsgamma)}{2^{2\alpha\lambda}(N-1)}
+  \frac{16}{2^{4\alpha} } 
    \sum_{\emptyset \neq \fraku \subseteq \{1:d\}} \gamma^2_{\fraku}
\left(2^{1+4\alpha}\zeta(4\alpha)\right)^{\abs{\fraku}}.
\]

Combining the above estimate with Lemma~\ref{lem:Nstarlowerbound} and Theorem~\ref{thm:combined_error}, we obtain the following result.

\begin{corollary}\label{cor:L_2_bound}
For any $f \in \Hcal_{d,\alpha,\bsgamma}$ and $\varepsilon\in (0,1)$, if the odd prime $N$ and the odd natural number $R$ satisfy condition~\eqref{eq:R_large_enough}, then with probability at least $1-\varepsilon$, we have
\begin{align}
    & \|f-A^{\rand}_{N,\bsz_{\{1{:}R\}},\bsDelta_{\{1{:}R\}}}(f)\|^2_{L_2} \notag \\
    & \leq 
    \norm{f}^2_{d,\alpha,\bsgamma}
    \frac{C_1}{N^{2\alpha}_*}\left(1 + C_{2}\sqrt {N^{4\alpha}_*\Bar{r}_{\mathrm{sum}}}\right) \notag \\
    & \leq 
    \norm{f}^2_{d,\alpha,\bsgamma}
    \frac{C_{1,\alpha,\lambda}(T_{\alpha,\lambda}(\bsgamma))^{1/\lambda}}{N^{1/\lambda}}\left(1 + C_{2,\alpha,\lambda}\sqrt{N^{2\alpha\lambda-1}T_{\alpha,\lambda} (\bsgamma)+ \sum_{\emptyset \neq \fraku \subseteq \{1:d\}} \gamma^2_{\fraku}
\left(2^{1+4\alpha}\zeta(4\alpha)\right)^{\abs{\fraku}}}\right),\label{eq:overall_bound}
\end{align}
for any $\lambda \in (1/(2\alpha),2)$, where $C_1, C_2, C_{1,\alpha,\lambda}, C_{2,\alpha,\lambda}$ are constants independent of $N$, $R$, and $d$, but possibly depending on $\alpha$ and $\lambda$, as indicated by their subscripts.
\end{corollary}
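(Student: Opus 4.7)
The plan is to derive both displayed inequalities in \eqref{eq:overall_bound} by assembling Theorem~\ref{thm:combined_error} with the two deterministic estimates proved immediately above the corollary statement (the bound on $\Bar{r}_{\mathrm{sum}}$ and the bound on $N_*^{4\alpha}\Bar{r}_{\mathrm{sum}}$) and the lower bound on $N_*$ from Lemma~\ref{lem:Nstarlowerbound}. Condition \eqref{eq:R_large_enough}, via the remark following Theorem~\ref{thm:combined_error}, already guarantees that the error estimate of that theorem holds with probability at least $1-\varepsilon$, so no new probabilistic argument is needed; what remains is purely algebraic.

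For the first inequality in \eqref{eq:overall_bound}, I would start from the bound of Theorem~\ref{thm:combined_error} and absorb the fixed numerical constants into two absolute constants $C_1$ and $C_2$. Concretely, $(2N-1)/(N-1)\le 3$ for $N\ge 2$, the prefactor $(991N-255)/(120(2N-1))$ is uniformly bounded by $991/240$, and $\max(960,x)\le 960+x$ for $x\ge 0$. These observations collapse the theorem's bound to $\|f\|^2_{d,\alpha,\bsgamma}\,C_1\, N_*^{-2\alpha}\bigl(1+C_2\sqrt{N_*^{4\alpha}\Bar{r}_{\mathrm{sum}}}\bigr)$, with $C_1$ absorbing $32\cdot 3$, $991/240$, and $960$, and $C_2$ absorbing $\sqrt{240}$. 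Crucially, neither constant depends on any of $N$, $R$, $d$, $\bsgamma$, $\alpha$, or $\lambda$.

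For the second inequality I would substitute the remaining two ingredients into the first. From Lemma~\ref{lem:Nstarlowerbound}, raising the lower bound on $N_*$ to the $2\alpha$-th power and using $N-1\ge N/2$ yields $1/N_*^{2\alpha}\le (32\,T_{\alpha,\lambda}(\bsgamma))^{1/\lambda}/N^{1/\lambda}$. For the square-root factor, I plug in the pre-corollary estimate on $N_*^{4\alpha}\Bar{r}_{\mathrm{sum}}$ and use $N/(N-1)\le 2$ to see that the expression under the square root is bounded by a constant depending only on $\alpha$ and $\lambda$ (through the factor $1/(2-\lambda)$ and powers of $2$) times $N^{2\alpha\lambda-1}T_{\alpha,\lambda}(\bsgamma)+\sum_{\emptyset\neq\fraku\subseteq\{1{:}d\}}\gamma^2_\fraku\bigl(2^{1+4\alpha}\zeta(4\alpha)\bigr)^{|\fraku|}$. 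Pulling that constant out of the square root and merging it with $C_2$ gives $C_{2,\alpha,\lambda}$, while combining $C_1$ with the factor $(32)^{1/\lambda}$ gives $C_{1,\alpha,\lambda}$, producing \eqref{eq:overall_bound}. The only obstacle is bookkeeping: I must ensure that all new dependencies introduced in passing from $C_1,C_2$ to $C_{1,\alpha,\lambda},C_{2,\alpha,\lambda}$ arise solely through $1/(2-\lambda)$ (requiring $\lambda<2$, inherited from the integral estimate based on Lemma~\ref{lem:calA_bound}) and through $1/\lambda$ (requiring $\lambda>1/(2\alpha)$, inherited from Lemma~\ref{lem:Nstarlowerbound}), with no hidden dependence on $N$, $R$, $d$, or $\bsgamma$.
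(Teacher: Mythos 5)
Your proposal is correct and follows essentially the same route as the paper, which obtains the corollary exactly by combining Theorem~\ref{thm:combined_error} (applicable under \eqref{eq:R_large_enough} via the preceding remark) with the pre-corollary estimate on $N_*^{4\alpha}\Bar{r}_{\mathrm{sum}}$ and Lemma~\ref{lem:Nstarlowerbound}, absorbing all numerical factors into the constants. One cosmetic slip: $(991N-255)/(120(2N-1))$ decreases to $991/240$ from above, so it is bounded by its value at the smallest odd prime $N=3$ (namely $2718/600\approx 4.53$) rather than by $991/240$ itself, which only changes the absolute constant and affects nothing else in the argument.
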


From Corollary~\ref{cor:L_2_bound}, we can draw several important conclusions regarding the median lattice-based algorithm. First, by choosing $\lambda$ arbitrarily close to $1/(2\alpha)$, we can achieve, for any given $f$, a convergence rate for the $L_2$-approximation error that is arbitrarily close to the optimal rate $N^{-\alpha}$, with high probability. Since our algorithm does not require any prior knowledge of the function’s smoothness or the weight parameters, this near-optimal convergence rate applies universally for any smoothness $\alpha > 1/2$ and any downward-closed weights $\bsgamma$.

Next, for $q\ge 0$, define
\[
B_{\lambda,q}:= \sup_{d\in\NN} 
\left(\frac{1}{d^q}\, T_{\alpha,\lambda}(\bsgamma)\right)
\quad
\text{and}\quad 
\widetilde{B}_{q}:= \sup_{d\in\NN} 
\left(\frac{1}{d^q}\, \sum_{\fraku \subseteq \{1:d\}} \gamma^2_{\fraku}
\left(2^{1+4\alpha}\zeta(4\alpha)\right)^{\abs{\fraku}}\right).
\]
If, for some $\lambda\in (1/(2\alpha),2)$ and some $q>0$, we have $B_{\lambda,q}<\infty$ and $\widetilde{B}_q<\infty$, then the error bound in \eqref{eq:overall_bound} depends at most polynomially on $d$, and an error of at most $\varepsilon>0$ can be achieved with $N$ depending at most polynomially on $d$ and $\varepsilon$, with high probability. Moreover, the number $R$ of independent rank-1 lattices required for this result depends only logarithmically on $N$ and linearly on $d$ by \eqref{eq:R_large_enough}. This situation is referred to as \textit{polynomial tractability} in the context of \textit{Information-Based Complexity} (see, e.g., \cite{novak2008tractability}).

In the case of \emph{product weights}, where $\gamma_{\fraku}=\prod_{j\in \fraku}\gamma_j$ with $\gamma_j\leq 1$, we have
\[ T_{\alpha,\lambda}(\bsgamma)=\sum_{\fraku\subseteq \{1:d\}} 
\prod_{j\in u}\left( 2\gamma_j^{\lambda} \zeta(2\alpha\lambda)\right)=\prod_{j=1}^{d}\left( 1+2\gamma_j^{\lambda} \zeta(2\alpha\lambda)\right). \]
Assume that 
\begin{align}\label{eq:condition_poly_tractable}
    A_{\lambda} := \limsup_{d\to \infty}\frac{1}{\log d}\sum_{j=1}^{d}\gamma_j^{\lambda}<\infty,
\end{align}
for some $\lambda\in (1/(2\alpha),2)$.
This assumption implies that, for any fixed $\delta>0$, there exists $d_\delta$ such that
\[ \frac{1}{\log d}\sum_{j=1}^{d}\gamma_j^{\lambda}<A_{\lambda}+\delta,\qquad \text{for all $d\ge d_{\delta}$.}\]
Applying the elementary inequality $\exp(x)\ge x+1$, we then obtain
\begin{align*}
    \frac{1}{d^q}\prod_{j=1}^{d}\left( 1+2\gamma_j^{\lambda} \zeta(2\alpha\lambda)\right) & \le \frac{1}{d^q}\exp\left(2\zeta(2\alpha\lambda)\sum_{j=1}^{d}\gamma_j^{\lambda}\right)  \\
    & \le \frac{1}{d^q}\exp\left(2(A_{\lambda}+\delta)\zeta(2\alpha\lambda)\log d \right) =d^{2(A_{\lambda}+\delta)\zeta(2\alpha\lambda)-q}.
\end{align*}
Hence, $B_{\lambda,q}<\infty$ for any $q> 2 A_{\lambda}\zeta(2\alpha\lambda)$. Similarly, if \eqref{eq:condition_poly_tractable} holds with $\lambda=2$, then $\widetilde{B}_{q}<\infty$ for any $q>2A_2\zeta(4\alpha)$.

If $B_{\lambda,q}<\infty$ and $\widetilde{B}_q<\infty$ even hold for $q=0$, then the error bound becomes independent of $d$. In the case of product weights, these conditions simplify to
\[ \sum_{j=1}^{\infty}\gamma_j^{\lambda}<\infty\quad \text{and}\quad \sum_{j=1}^{\infty}\gamma_j^2<\infty, \]
respectively.
Unfortunately, the required number $R$ still depends linearly on $d$ (cf.\ \eqref{eq:R_large_enough}), which prevents us from claiming \textit{strong polynomial tractability} for our approximation algorithm. This limitation appears to be a consequence of the universality of the algorithm. Nevertheless, the linear dependence on $d$ may be acceptable for applications with moderate dimension. If strong tractability is desired, one could instead use the algorithm from \cite{PKG24}, which achieves the same near-optimal convergence order but does not maintain universality.

\section{Error analysis: bounds on the error decomposition}\label{sec:error}

In this section, we provide a detailed analysis of the three summands in the error decomposition \eqref{eqn:errordecom}, which underlies the probabilistic $L_2$-error bounds given in Theorem~\ref{thm:combined_error} and Corollary~\ref{cor:L_2_bound}. Each term is treated in a separate subsection, where we establish probabilistic bounds that, when combined, recover the main result presented in Section~\ref{sec:median}.

\subsection{The first term of \eqref{eqn:errordecom}}

Our proof in this part is adapted from that in \cite{PKG24}. 
First, we show that, for most random choices of $\bsz_r$, the generated lattices are ``good'' for estimating $\widehat{f}(\bsh)$.

\begin{lemma}\label{lem:nointersect}
For any $r\in\{1{:}R\}$, define
\begin{align}\label{eq:def_dual_lattice}
P_{N,\bsz_r}^\perp := \{\bsl \in \ZZ^d : \bsz_r^\top \bsl \equiv 0 \tmod N\}.
\end{align}
For any $\bsh \in \Acal_d(N/2)$, define
\[
\Bcal_{\bsh} := \left\{ \bsl \in \ZZ^d \setminus (N \cdot \ZZ^d) : r_{2\alpha,\bsgamma}(\bsh+\bsl) < N_*^{2\alpha} \right\},
\]
where $N_*$ is as defined in \eqref{eq:N_star_def}. Then
\[
\Pr(P_{N,\bsz_r}^\perp \cap \Bcal_{\bsh} \neq \emptyset) \le \frac{1}{16}.
\]
\end{lemma}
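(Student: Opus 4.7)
The plan is to bound the desired probability by a union bound over $\bsl \in \Bcal_{\bsh}$, combining a per-$\bsl$ estimate on $\Pr(\bsz_r^\top \bsl \equiv 0 \tmod N)$ with a cardinality bound on $\Bcal_{\bsh}$ that draws on \eqref{eqn:Nstar}. Explicitly, I would begin by writing
\[
\Pr\bigl(P_{N,\bsz_r}^\perp \cap \Bcal_{\bsh} \neq \emptyset\bigr)
\;\le\; \sum_{\bsl \in \Bcal_{\bsh}} \Pr\bigl(\bsz_r^\top \bsl \equiv 0 \tmod N\bigr).
\]

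The per-$\bsl$ step is the first technical point. Fix $\bsl \in \Bcal_{\bsh}$; by definition $\bsl \notin N \cdot \ZZ^d$, so there exists an index $j^\ast \in \{1{:}d\}$ with $\ell_{j^\ast} \not\equiv 0 \tmod N$. Condition on the components $z_{r,j}$ for $j \neq j^\ast$. Since $N$ is an odd prime, $\ell_{j^\ast}$ is invertible modulo $N$, so the congruence $\bsz_r^\top \bsl \equiv 0 \tmod N$ determines $z_{r,j^\ast}$ uniquely modulo $N$. Because $z_{r,j^\ast}$ is uniform on $\{1{:}(N-1)\}$ (which covers each nonzero residue class modulo $N$ exactly once), the conditional probability of hitting this particular residue is at most $1/(N-1)$. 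Averaging over the conditioning gives $\Pr(\bsz_r^\top \bsl \equiv 0 \tmod N) \le 1/(N-1)$.

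The second step is to count $\Bcal_{\bsh}$. The map $\bsl \mapsto \bsh + \bsl$ sends $\Bcal_{\bsh}$ injectively into $\Acal_{d,\alpha,\bsgamma}(N_*)$, because the defining inequality $r_{2\alpha,\bsgamma}(\bsh+\bsl) < N_*^{2\alpha}$ is exactly membership in $\Acal_{d,\alpha,\bsgamma}(N_*)$. Hence $|\Bcal_{\bsh}| \le |\Acal_{d,\alpha,\bsgamma}(N_*)|$, and invoking \eqref{eqn:Nstar} yields $|\Bcal_{\bsh}| \le (N-1)/16$. Combining this with the union bound above gives
\[
\Pr\bigl(P_{N,\bsz_r}^\perp \cap \Bcal_{\bsh} \neq \emptyset\bigr)
\;\le\; \frac{|\Bcal_{\bsh}|}{N-1} \;\le\; \frac{1}{16},
\]
which is the claim.

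I do not expect a real obstacle here: the only point that requires mild care is justifying the uniform distribution of $z_{r,j^\ast}\ell_{j^\ast} \tmod N$ from the primality of $N$ and the uniform draw from $\{1{:}(N-1)\}$, and then recognizing that $\Bcal_{\bsh}$ is just a shift of (a subset of) $\Acal_{d,\alpha,\bsgamma}(N_*)$ so that the threshold $(N-1)/16$ built into the definition of $N_*$ pays for the union bound exactly.
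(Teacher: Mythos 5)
Your proposal is correct and follows essentially the same route as the paper: a union bound over $\Bcal_{\bsh}$, the per-$\bsl$ estimate $\Pr(\bsz_r^\top\bsl\equiv 0 \tmod N)\le 1/(N-1)$ (the paper states this as \eqref{eqn:ellinP}; your conditioning argument via primality of $N$ is just its justification spelled out), and the cardinality bound $|\Bcal_{\bsh}|\le|\Acal_{d,\alpha,\bsgamma}(N_*)|\le (N-1)/16$ from \eqref{eqn:Nstar} via the shift $\bsl\mapsto\bsh+\bsl$. No gaps.
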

\begin{proof} 
By \eqref{eqn:Nstar}, we have
\begin{align*}
|\Bcal_{\bsh}| 
&= \Big| \big\{ \bsl \in \ZZ^d \setminus (N\cdot \ZZ^d) : r_{2\alpha,\bsgamma}(\bsh+\bsl) < N_*^{2\alpha} \big\} \Big| \\
&\le \Big| \big\{ \bsl \in \ZZ^d : r_{2\alpha,\bsgamma}(\bsl) < N_*^{2\alpha} \big\} \Big| \le \frac{N-1}{16}.
\end{align*}
For each $\bsl \in \ZZ^d \setminus (N \cdot \ZZ^d)$, 
\begin{align}\label{eqn:ellinP}
\Pr(\bsz_r^\top \bsl \equiv 0 \tmod N) = \frac{1}{(N-1)^d} \sum_{\bsz_r \in \{1{:}N-1\}^d} \mathbf{1}\{\bsl \in P_{N,\bsz_r}^\perp\} \le \frac{1}{N-1}.
\end{align}
It then follows from a union bound that
\[
\Pr(P_{N,\bsz_r}^\perp \cap \Bcal_{\bsh} \neq \emptyset) 
\le \frac{|\Bcal_{\bsh}|}{N-1} \le \frac{1}{16}. \qedhere
\]
\end{proof}

The next lemma bounds the typical error in replacing $\widehat{f}(\bsh)$ with $\widehat{f}_{N,\bsz_r,\bsDelta_r}(\bsh)$.

\begin{lemma}\label{lem:concentration}
Let $f \in \Hcal_{d,\alpha,\bsgamma}$. For 
$\bsh \in \Acal_d(N/2)$, define
\[
\delta(\bsh) := 4 \left( \frac{\|f\|^2_{d,\alpha,\bsgamma}}{N_*^{2\alpha}(N-1)} + \sum_{\bsl \in N \cdot \ZZ^d \setminus \{\bszero\}} |\widehat{f}(\bsh+\bsl)|^2 \right)^{1/2}.
\]
Then, for any $r \in \{1{:}R\}$,
\[
\Pr\left( |\widehat{f}_{N,\bsz_r,\bsDelta_r}(\bsh) - \widehat{f}(\bsh)|^2 > \delta(\bsh)^2 \right) \le \frac{1}{8}.
\]
\end{lemma}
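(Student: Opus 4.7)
The plan is to compute the conditional variance of $\widehat{f}_{N,\bsz_r,\bsDelta_r}(\bsh) - \widehat{f}(\bsh)$ over $\bsDelta_r$, then use Lemma~\ref{lem:nointersect} to restrict to a ``good'' event over $\bsz_r$, and finally convert the second-moment estimate into a tail bound via Markov's inequality.

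First I would expand $f$ as a Fourier series and use the lattice character-sum identity $N^{-1}\sum_{k=0}^{N-1}\exp(2\pi\icomp k m/N) = \mathbf{1}\{m \equiv 0 \tmod N\}$ to derive the standard aliasing formula
\[
\widehat{f}_{N,\bsz_r,\bsDelta_r}(\bsh) - \widehat{f}(\bsh) = \sum_{\bsl \in P_{N,\bsz_r}^\perp \setminus \{\bszero\}} \widehat{f}(\bsh+\bsl)\, \exp(2\pi\icomp\, \bsl \cdot \bsDelta_r),
\]
with $P_{N,\bsz_r}^\perp$ as in \eqref{eq:def_dual_lattice}. Since the characters $\{\exp(2\pi\icomp\, \bsl\cdot\bsDelta_r)\}_{\bsl}$ are orthonormal under $\bsDelta_r\sim U([0,1)^d)$, taking the expectation in $\bsDelta_r$ kills the cross terms and gives
\[
\EE_{\bsDelta_r}\bigl[|\widehat{f}_{N,\bsz_r,\bsDelta_r}(\bsh) - \widehat{f}(\bsh)|^2\bigr] = \sum_{\bsl \in N\cdot\ZZ^d \setminus\{\bszero\}} |\widehat{f}(\bsh+\bsl)|^2 + \sum_{\bsl \in P_{N,\bsz_r}^\perp \setminus N\cdot\ZZ^d} |\widehat{f}(\bsh+\bsl)|^2,
\]
using the inclusion $N\cdot\ZZ^d \subseteq P_{N,\bsz_r}^\perp$. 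The first sum is exactly the deterministic term appearing in $\delta(\bsh)^2/16$, so the work reduces to controlling the second, $\bsz_r$-dependent sum.

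Let $A := \{P_{N,\bsz_r}^\perp \cap \Bcal_{\bsh} = \emptyset\}$. Lemma~\ref{lem:nointersect} gives $\Pr(A^c)\le 1/16$, and on $A$ every $\bsl \in P_{N,\bsz_r}^\perp \setminus N\cdot\ZZ^d$ must satisfy $r_{2\alpha,\bsgamma}(\bsh+\bsl) \ge N_*^{2\alpha}$. Therefore, using the trivial bound $1 \le r_{2\alpha,\bsgamma}(\bsh+\bsl)/N_*^{2\alpha}$ on this set together with the per-index marginal $\Pr(\bsl \in P_{N,\bsz_r}^\perp)\le 1/(N-1)$ from \eqref{eqn:ellinP}, I would compute
\[
\EE_{\bsz_r}\!\left[\mathbf{1}_A \sum_{\bsl \in P_{N,\bsz_r}^\perp \setminus N\cdot\ZZ^d} |\widehat{f}(\bsh+\bsl)|^2\right] \le \frac{1}{N_*^{2\alpha}(N-1)} \sum_{\bsm\in \ZZ^d} |\widehat{f}(\bsm)|^2 r_{2\alpha,\bsgamma}(\bsm) = \frac{\|f\|^2_{d,\alpha,\bsgamma}}{N_*^{2\alpha}(N-1)}.
\]

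Putting these together, setting $X := |\widehat{f}_{N,\bsz_r,\bsDelta_r}(\bsh) - \widehat{f}(\bsh)|^2$ and $S := \delta(\bsh)^2/16$, I get $\EE[X\mathbf{1}_A] \le S$. Markov's inequality then yields $\Pr(X\mathbf{1}_A > 16 S) \le 1/16$, and since $\{X > 16 S\} \subseteq \{X\mathbf{1}_A > 16 S\} \cup A^c$, a union bound gives $\Pr(X > \delta(\bsh)^2) \le 1/16 + 1/16 = 1/8$. The only mildly delicate point is correctly decoupling the $\bsDelta_r$- and $\bsz_r$-randomness and bookkeeping the split $P_{N,\bsz_r}^\perp \setminus \{\bszero\} = (N\cdot\ZZ^d\setminus\{\bszero\}) \sqcup (P_{N,\bsz_r}^\perp \setminus N\cdot\ZZ^d)$; beyond that, the argument is a straightforward chaining of second-moment and union-bound estimates.
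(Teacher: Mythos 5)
Your proposal is correct and follows essentially the same route as the paper: split on the good event $\{P_{N,\bsz_r}^\perp\cap\Bcal_{\bsh}=\emptyset\}$ from Lemma~\ref{lem:nointersect}, bound the conditional second moment via the aliasing identity and the marginal bound \eqref{eqn:ellinP} (yielding exactly $\delta(\bsh)^2/16$), and finish with Markov's inequality and a union bound to get $1/16+1/16=1/8$. The only cosmetic difference is that you re-derive the shift-averaged identity \eqref{eqn:deltamean}, which the paper imports from \cite{CGK24}.
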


\begin{proof}
By Lemma~\ref{lem:nointersect} and Markov's inequality, it holds that
\begin{align}\label{eqn:Markov}
    & \Pr\left(|\widehat{f}_{N,\bsz_r,\bsDelta_r}(\bsh)-\widehat{f}(\bsh)|^2>\delta(\bsh)^2\right)\nonumber\\
    & \le \Pr(P_{N,\boldsymbol{z}_r}^\perp\cap \mathcal{B}_{\bsh}\neq \emptyset)+\Pr\left(\{|\widehat{f}_{N,\bsz_r,\bsDelta_r}(\bsh)-\widehat{f}(\bsh)|^2> \delta(\bsh)^2\}\cap \{P_{N,\boldsymbol{z}_r}^\perp\cap \mathcal{B}_{\bsh}= \emptyset\}\right)\nonumber\\
    & = \Pr(P_{N,\boldsymbol{z}_r}^\perp\cap \mathcal{B}_{\bsh}\neq \emptyset)+\Pr\left(\delta(\bsh)^{-2}|\widehat{f}_{N,\bsz_r,\bsDelta_r}(\bsh)-\widehat{f}(\bsh)|^2\bsone\{P_{N,\boldsymbol{z}_r}^\perp\cap \mathcal{B}_{\bsh}= \emptyset\}> 1\right) \nonumber\\
    & \le \frac{1}{16} 
    +\frac{1}{\delta(\bsh)^2}\e\left[|\widehat{f}_{N,\bsz_r,\bsDelta_r}(\bsh)-\widehat{f}(\bsh)|^2\bsone\{P_{N,\boldsymbol{z}_r}^\perp\cap \mathcal{B}_{\bsh}= \emptyset\}\right].
\end{align}
As outlined in \cite[Proof of Lemma 4.1]{CGK24},
\begin{equation}\label{eqn:deltamean}
\int_{[0,1)^d} |\widehat{f}_{N,\bsz_r,\bsDelta_r}(\bsh)-\widehat{f}(\bsh)|^2 \, d\bsDelta_r
= \sum_{\bsl \in P_{N,\bsz_r}^\perp \setminus \{\bszero\}} |\widehat{f}(\bsh+\bsl)|^2.
\end{equation}
Taking expectation over $\bsz_r$ and using independence of $\bsDelta_r$ and $\bsz_r$, we obtain
\begin{align*}
    & \e\left[|\widehat{f}_{N,\bsz_r,\bsDelta_r}(\bsh)-\widehat{f}(\bsh)|^2\bsone\{P_{N,\boldsymbol{z}_r}^\perp\cap \mathcal{B}_{\bsh}= \emptyset\}\right] \notag \\
    & = \frac{1}{(N-1)^d}\sum_{\bsz_r\in \{1{:}(N-1)\}^d}\sum_{\bsl \in P_{N,\boldsymbol{z}_r}^\perp\setminus \{\bszero\}}|\widehat{f}(\bsh+\bsl)|^2 \bsone\{P_{N,\boldsymbol{z}_r}^\perp\cap \mathcal{B}_{\bsh}= \emptyset\}\notag\\
    & \le \frac{1}{(N-1)^d}\sum_{\bsz_r\in \{1{:}(N-1)\}^d}\left(\sum_{\bsl \in P_{N,\boldsymbol{z}_r}^\perp\setminus (N\cdot \ZZ^d) }|\widehat{f}(\bsh+\bsl)|^2 \frac{r_{2\alpha, \bsgamma}(\bsh+\bsl) }{N_*^{2\alpha}}+\sum_{\bsl \in N\cdot \ZZ^d\setminus \{\bszero\} }|\widehat{f}(\bsh+\bsl)|^2\right) \notag\\
    & = \sum_{\bsl \in \ZZ^d\setminus (N\cdot \ZZ^d)}|\widehat{f}(\bsh+\bsl)|^2\frac{r_{2\alpha, \bsgamma}(\bsh+\bsl) }{N_*^{2\alpha}}\left(\frac{1}{(N-1)^d}\sum_{\bsz_r\in \{1{:}(N-1)\}^d}\bsone\{\bsl \in P_{N,\boldsymbol{z}_r}^\perp\}\right)+\sum_{\bsl \in N\cdot \ZZ^d\setminus \{\bszero\} }|\widehat{f}(\bsh+\bsl)|^2\notag\\
    & \le \frac{\|f\|^2_{d,\alpha,\bsgamma}}{N_*^{2\alpha}(N-1) }+\sum_{\bsl \in N\cdot \ZZ^d\setminus \{\bszero\} }|\widehat{f}(\bsh+\bsl)|^2,
\end{align*}
where we have used \eqref{eqn:ellinP}. Substituting this bound into \eqref{eqn:Markov} yields the desired probability bound.
\end{proof}

The next lemma illustrates how the median trick reduces the failure probability. It comes from \cite[Proposition 2.2]{kunsch2019solvable}, which is a minor modification of  \cite[Proposition 2.1]{niemiro2009fixed}.

\begin{lemma}\label{lem:mediantrick}
    Let $R$ be an odd natural number, and let $X_1, X_2, \dots, X_R$ be $R$ independent and identically distributed real random variables. Suppose that for some interval $I \subseteq \RR$ and $p \in (0,1/2)$ we have
    \[
    \Pr\left(X_1 \notin I\right) \leq p.
    \]
    Then
    \[
    \Pr\left(\operatorname*{median}_{r \in \{1{:}R\}}(X_r)     \notin I\right) 
    \leq \frac{1}{2}\,(4p(1-p))^{R/2}.
    \]
\end{lemma}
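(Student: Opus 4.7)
The plan is to reduce the claim to a standard binomial tail estimate by translating the geometry of the median into a counting statement. Set $q := \Pr(X_1 \notin I) \le p$ and let $N := \#\{r \in \{1{:}R\} : X_r \notin I\}$, so that $N \sim \text{Binomial}(R,q)$ by the independence of the $X_r$. Write $R = 2m+1$.

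First I would argue that the event $\{\operatorname{median}_{r} X_r \notin I\}$ implies $\{N \ge m+1\}$. For odd $R$, the median is the $(m+1)$-th order statistic $X_{(m+1)}$. Since $I$ is an interval, $X_{(m+1)}\notin I$ forces either $X_{(m+1)} \le \inf I$ (with equality possible only when $\inf I \notin I$) or $X_{(m+1)}\ge \sup I$ (symmetrically). In the first case $X_{(1)},\dots,X_{(m+1)}$ all lie outside $I$; in the second case $X_{(m+1)},\dots,X_{(R)}$ all do. Either way at least $m+1$ of the $X_r$ are outside $I$, so $\Pr(\operatorname{median}_r X_r \notin I)\le \Pr(N\ge m+1)$.

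Next I would bound the tail $\Pr(N\ge m+1) = \sum_{k=m+1}^{R}\binom{R}{k}q^k(1-q)^{R-k}$ directly. Since $q\le p\le 1/2$, we have $q(1-q)\le p(1-p)$ and $q/(1-q)\le 1$, and for $k\ge m+1$ the identity
\[
q^k(1-q)^{R-k} \;=\; q\cdot (q(1-q))^m\cdot\left(\frac{q}{1-q}\right)^{k-m-1}
\]
yields $q^k(1-q)^{R-k}\le p\,(p(1-p))^m$. Combining this with the elementary identity $\sum_{k=m+1}^{2m+1}\binom{2m+1}{k}=2^{2m}=2^{R-1}$ (which follows from the symmetry $\binom{2m+1}{k}=\binom{2m+1}{2m+1-k}$) gives
\[
\Pr(N\ge m+1) \;\le\; 2^{R-1}\,p\,(p(1-p))^m.
\]
Finally, the inequality $p\le \sqrt{p(1-p)}$ for $p\le 1/2$ upgrades the right-hand side to $2^{R-1}(p(1-p))^{R/2} = \tfrac12(4p(1-p))^{R/2}$, which is precisely the claimed bound.

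I do not anticipate any real obstacle, as the result is already quoted from \cite{kunsch2019solvable,niemiro2009fixed} and the argument above is elementary. The only point requiring a little care is the first step, where one has to treat intervals of arbitrary type (open, closed, half-open, or unbounded) uniformly; this is handled by working with non-strict comparisons to $\inf I$ and $\sup I$ rather than assuming a specific form for $I$.
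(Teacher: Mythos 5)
Your proof is correct. Note that the paper itself does not prove this lemma at all: it simply quotes it from Kunsch--Novak--Rudolf (Proposition 2.2), which in turn is a minor modification of Niemiro--Pokarowski (Proposition 2.1). Your argument is essentially the standard one underlying those cited results -- the median being outside the interval forces at least $m+1$ of the $R=2m+1$ replicates outside (this is where the interval structure of $I$ is used, and your case distinction at $\inf I$ and $\sup I$ handles all interval types correctly), after which one bounds the binomial tail $\Pr(N\ge m+1)$. Each step checks out: the factorization $q^k(1-q)^{R-k}=q\,(q(1-q))^m\,(q/(1-q))^{k-m-1}$, the monotonicity facts $q/(1-q)\le 1$ and $q(1-q)\le p(1-p)$ for $q\le p\le 1/2$, the identity $\sum_{k=m+1}^{2m+1}\binom{2m+1}{k}=2^{R-1}$, and finally $p\le\sqrt{p(1-p)}$, which together give exactly $\tfrac12(4p(1-p))^{R/2}$. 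A small added value of your write-up is that the constant $\tfrac12$ (the ``minor modification'' of the original Niemiro--Pokarowski bound) comes out directly from the exact binomial-coefficient sum rather than from a Chernoff-type estimate, so your proposal makes the paper's citation self-contained.
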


The following result is a direct consequence of Lemmas~\ref{lem:concentration} and~\ref{lem:mediantrick}.

\begin{lemma}\label{lem:medianbound}
    Suppose that $R$ is an odd integer and $\bsh \in \Acal_{d}(N/2)$. Then
    \[
    \Pr\left(
    \left|\widehat{f}_{N,\bsz_{\{1{:}R\}},\bsDelta_{\{1{:}R\}}}(\bsh) - \widehat{f}(\bsh)\right|^2 > 2\,\delta(\bsh)^2 \right) \le 2^{-R/2},
\]
where $\delta(\bsh)$ is as defined in Lemma~\ref{lem:concentration}.
\end{lemma}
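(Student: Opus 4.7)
The plan is to reduce the claim to a componentwise application of the median trick in Lemma~\ref{lem:mediantrick}, using the complex-to-real split built into the definition of the complex median. First, I would set $Z_r := \widehat{f}_{N,\bsz_r,\bsDelta_r}(\bsh) - \widehat{f}(\bsh)$ for $r \in \{1{:}R\}$; these are i.i.d.\ because the $(\bsz_r,\bsDelta_r)$ are drawn independently with the same distribution. By Lemma~\ref{lem:concentration}, $\Pr(|Z_r|^2 > \delta(\bsh)^2) \le 1/8$. Since $|Z_r|^2 = \Re(Z_r)^2 + \Im(Z_r)^2$, the event $|Z_r|^2 \le \delta(\bsh)^2$ forces both $|\Re(Z_r)| \le \delta(\bsh)$ and $|\Im(Z_r)| \le \delta(\bsh)$. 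Hence each of $\Re(Z_r)$ and $\Im(Z_r)$ lies in $I := [-\delta(\bsh),\delta(\bsh)]$ with probability at least $7/8$.

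Next, I would apply Lemma~\ref{lem:mediantrick} separately to the real sequence $\{\Re(Z_r)\}_{r=1}^R$ and to the imaginary sequence $\{\Im(Z_r)\}_{r=1}^R$, with the interval $I$ above and $p = 1/8$. This yields
\[
\Pr\!\left(\operatorname*{median}_{r \in \{1{:}R\}} \Re(Z_r) \notin I\right) \le \tfrac{1}{2}\bigl(4 \cdot \tfrac{1}{8} \cdot \tfrac{7}{8}\bigr)^{R/2} = \tfrac{1}{2}\bigl(\tfrac{7}{16}\bigr)^{R/2},
\]
and the analogous bound for the imaginary part.

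Finally, by the componentwise definition of the complex median,
\[
\widehat{f}_{N,\bsz_{\{1{:}R\}},\bsDelta_{\{1{:}R\}}}(\bsh) - \widehat{f}(\bsh)
= \operatorname*{median}_{r \in \{1{:}R\}} \Re(Z_r) + \icomp \cdot \operatorname*{median}_{r \in \{1{:}R\}} \Im(Z_r),
\]
so if both medians lie in $I$ then the squared modulus of the left-hand side is at most $2\delta(\bsh)^2$. A union bound over the two componentwise failure events gives
\[
\Pr\!\left(\bigl|\widehat{f}_{N,\bsz_{\{1{:}R\}},\bsDelta_{\{1{:}R\}}}(\bsh) - \widehat{f}(\bsh)\bigr|^2 > 2\delta(\bsh)^2\right)
\le 2 \cdot \tfrac{1}{2}\bigl(\tfrac{7}{16}\bigr)^{R/2}
= \bigl(\tfrac{7}{16}\bigr)^{R/2} \le 2^{-R/2},
\]
since $7/16 < 1/2$. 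There is no real obstacle here: the only mild subtlety is keeping track of the real/imaginary split, where the factor of $2$ in $2\delta(\bsh)^2$ comes from $\Re^2 + \Im^2$ and the factor of $2$ in the union bound is absorbed by the $1/2$ prefactor in the median trick, leaving exactly the $2^{-R/2}$ threshold.
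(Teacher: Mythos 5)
Your proposal is correct and follows essentially the same route as the paper: bound the per-replicate failure probability for each of the real and imaginary parts via Lemma~\ref{lem:concentration} (with $p=1/8$), apply the median trick of Lemma~\ref{lem:mediantrick} componentwise to get $\tfrac{1}{2}(7/16)^{R/2}\le\tfrac{1}{2}2^{-R/2}$ per component, and finish with a union bound, the factor $2$ in $2\delta(\bsh)^2$ coming from $\Re^2+\Im^2$. No gaps.
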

\begin{proof} 
    By Lemma~\ref{lem:concentration}, for each $r \in \{1{:}R\}$ we have
    \[
    \Pr\left(|\Re\widehat{f}_{N,\bsz_r,\bsDelta_r}(\bsh)-\Re\widehat{f}(\bsh)|^2> \delta(\bsh)^2\right)
    \le \frac{1}{8}. 
    \]
    Applying Lemma~\ref{lem:mediantrick}, it follows that
    \[
        \Pr\left(\left|\Re\widehat{f}_{N,\bsz_{\{1{:}R\}},\bsDelta_{\{1{:}R\}}}(\bsh)-\Re\widehat{f}(\bsh)\right|^2> \delta(\bsh)^2\right)
         \le \frac{1}{2}\left( 4\cdot \frac{1}{8}\cdot\frac{7}{8}\right)^{R/2}\leq \frac{1}{2}2^{-R/2}.
    \]
    A similar bound holds for the imaginary part. The claim then follows by applying a union bound.    
\end{proof}

We now show a bound on the first summand of \eqref{eqn:errordecom}.

\begin{theorem}\label{thm:L2rate_constants}
    Let $N$ be an odd prime and let $R$ be an odd natural number satisfying
    \begin{equation}\label{eq:choose_R_first}
    \varepsilon_1:= N\,2^{-R/2}< 1.
    \end{equation}
    Then, for any $f\in \Hcal_{d,\alpha,\bsgamma}$, with probability at least $1-\varepsilon_1$ we have
    \[
    \sum_{\bsh\in K_N}\left|\widehat{f}_{N,\bsz_{\{1{:}R\}},\bsDelta_{\{1{:}R\}}}(\bsh)-\widehat{f}(\bsh)\right|^2\le 32\frac{\|f\|^2_{d,\alpha,\bsgamma}}{N_*^{2\alpha}}\;\frac{2N-1}{N-1}.
    \]
\end{theorem}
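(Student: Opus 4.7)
The plan is to combine the per-coefficient tail bound from Lemma~\ref{lem:medianbound} with the aliasing bound in Corollary~\ref{cor:sumnotinN}, via a union bound over the $N$ elements of $K_N$. Recall that $\delta(\bsh)^2 = 16\bigl(\|f\|^2_{d,\alpha,\bsgamma}/(N_*^{2\alpha}(N-1)) + \sum_{\bsl \in N\cdot\ZZ^d \setminus \{\bszero\}} |\widehat{f}(\bsh+\bsl)|^2\bigr)$, and for each fixed $\bsh \in \Acal_d(N/2)$ Lemma~\ref{lem:medianbound} gives
\[
\Pr\!\left[\bigl|\widehat{f}_{N,\bsz_{\{1{:}R\}},\bsDelta_{\{1{:}R\}}}(\bsh) - \widehat{f}(\bsh)\bigr|^2 > 2\delta(\bsh)^2\right] \le 2^{-R/2}.
\]

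The first step is to promote this per-$\bsh$ inequality to one that holds simultaneously for every $\bsh \in K_N$. Since $|K_N| = N$, a union bound over the $N$ coefficients appearing in $K_N$ should produce the claimed failure probability $N\cdot 2^{-R/2} = \varepsilon_1$. I would make this rigorous by bounding $\EE\bigl[\bigl|\{\bsh \in K_N : |\widehat{f}_{N,\bsz_{\{1{:}R\}},\bsDelta_{\{1{:}R\}}}(\bsh)-\widehat{f}(\bsh)|^2 > 2\delta(\bsh)^2\}\bigr|\bigr]$ and then invoking Markov's inequality, exploiting that $|K_N|=N$ to prevent the bound from scaling with $|\Acal_d(N/2)|$.

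Once the favorable event has been established, the rest is deterministic. On that event,
\[
\sum_{\bsh\in K_N} \bigl|\widehat{f}_{N,\bsz_{\{1{:}R\}},\bsDelta_{\{1{:}R\}}}(\bsh) - \widehat{f}(\bsh)\bigr|^2 \;\le\; \sum_{\bsh\in K_N} 2\delta(\bsh)^2.
\]
Expanding $\delta(\bsh)^2$, the constant-in-$\bsh$ piece contributes exactly $32 N \|f\|^2_{d,\alpha,\bsgamma}/(N_*^{2\alpha}(N-1))$. For the aliasing piece, $K_N \subseteq \Acal_d(N/2)$ together with Corollary~\ref{cor:sumnotinN} yields
\[
\sum_{\bsh\in K_N}\sum_{\bsl \in N\cdot\ZZ^d \setminus \{\bszero\}}\bigl|\widehat{f}(\bsh+\bsl)\bigr|^2 \;\le\; \frac{\|f\|^2_{d,\alpha,\bsgamma}}{N_*^{2\alpha}}.
\]
Adding the two contributions produces
\[
\sum_{\bsh\in K_N} 2\delta(\bsh)^2 \;\le\; \frac{32\|f\|^2_{d,\alpha,\bsgamma}}{N_*^{2\alpha}}\left(\frac{N}{N-1}+1\right) \;=\; \frac{32\|f\|^2_{d,\alpha,\bsgamma}}{N_*^{2\alpha}}\cdot\frac{2N-1}{N-1},
\]
which matches the target.

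The main obstacle is the first step. The naive union bound over all of $\Acal_d(N/2)$ gives the weaker failure probability $|\Acal_d(N/2)|\cdot 2^{-R/2}$, which dominates $N\cdot 2^{-R/2}$ whenever $d\ge 2$. Obtaining the sharper $N\cdot 2^{-R/2}$ requires carefully handling the dependence between the (random) selection $K_N$ and the estimation-error events $E_{\bsh}$; the rest of the argument is routine expansion and a direct appeal to Corollary~\ref{cor:sumnotinN}.
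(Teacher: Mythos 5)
Your route is the paper's route: the deterministic half (expanding $2\delta(\bsh)^2$, using $|K_N|=N$ for the constant piece and Corollary~\ref{cor:sumnotinN} for the aliasing piece, giving the factor $(2N-1)/(N-1)$) is exactly the computation in the paper. The only divergence is the step you single out as the main obstacle, and there the paper does nothing subtle: it simply applies the union bound to the at most $N$ indices of $K_N$, bounding the failure probability by $|K_N|\,2^{-R/2}=\varepsilon_1$, with no device to decouple the data-dependent selection of $K_N$ from the events $E_{\bsh}=\{|\widehat{f}_{N,\bsz_{\{1{:}R\}},\bsDelta_{\{1{:}R\}}}(\bsh)-\widehat{f}(\bsh)|^2>2\delta(\bsh)^2\}$.

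Your instinct that this deserves care is legitimate, but the fix you sketch does not deliver the sharper constant either: since $K_N$ is built from the same randomness as the estimates (and indices with inflated estimates are, if anything, more likely to be selected), the probabilities $\Pr(\bsh\in K_N,\,E_{\bsh})$ need not factor, so $\EE\bigl[|\{\bsh\in K_N: E_{\bsh}\}|\bigr]=\sum_{\bsh\in\Acal_d(N/2)}\Pr(\bsh\in K_N,\,E_{\bsh})$ can only be bounded by $\min\bigl(N,\;|\Acal_d(N/2)|\,2^{-R/2}\bigr)$ without an independence or conditioning argument; Markov's inequality then reproduces the coarse bound, not $N\,2^{-R/2}$. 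If you want a statement that is airtight with respect to the randomness of $K_N$, the simple remedy is to union bound over all of $\Acal_d(N/2)$, replacing $\varepsilon_1$ by $|\Acal_d(N/2)|\,2^{-R/2}$; this is harmless for the overall result, because Theorem~\ref{thm:K'minusK} already forces a term of exactly that size into $\varepsilon_2$, so Theorem~\ref{thm:combined_error} and the requirement \eqref{eq:R_large_enough} on $R$ are unaffected up to constants. In short: same approach as the paper, your deterministic part is complete and correct, and the probabilistic step should either be stated as the paper states it (direct union bound over the $N$ elements of $K_N$) or closed via the coarser union bound over $\Acal_d(N/2)$ rather than the expected-count argument you propose.
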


\begin{proof}
    By Lemma~\ref{lem:medianbound}, 
    \begin{align*}
    \Pr\left(\left|\widehat{f}_{N,\bsz_{\{1{:}R\}},\bsDelta_{\{1{:}R\}}}(\bsh)-\widehat{f}(\bsh)\right|^2> 2\delta(\bsh)^2 \text{ for at least one }  \bsh\in K_N\right)
    \le |K_N| \, \,  2^{-R/2}
    = \varepsilon_1.
    \end{align*}
    Hence, with probability at least $1-\varepsilon_1$, we have $\left|\widehat{f}_{N,\bsz_{\{1{:}R\}},\bsDelta_{\{1{:}R\}}}(\bsh)-\widehat{f}(\bsh)\right|^2\leq 2\delta(\bsh)^2$ for all $\bsh\in K_N$. It then follows that
    \begin{align*}
        \sum_{\bsh\in K_N}\left|\widehat{f}_{N,\bsz_{\{1{:}R\}},\bsDelta_{\{1{:}R\}}}(\bsh)-\widehat{f}(\bsh)\right|^2
        & \le 2\sum_{\bsh\in K_N}\delta(\bsh)^2\\
        & \le 32\left(\frac{|K_N|}{N-1}\,\frac{\|f\|^2_{d,\alpha,\bsgamma}}{N_*^{2\alpha}}+\sum_{\bsh\in K_N}\sum_{\bsl \in N\cdot \ZZ^d\setminus \{\bszero\} }|\widehat{f}(\bsh+\bsl)|^2\right)\\
        & \le 32\left(\frac{N}{N-1}\,\frac{\|f\|^2_{d,\alpha,\bsgamma}}{N_*^{2\alpha}}+\frac{\|f\|^2_{d,\alpha,\bsgamma}}{N_*^{2\alpha}}\right),
    \end{align*}
    where the last inequality uses Corollary~\ref{cor:sumnotinN}.
\end{proof}

\subsection{The second term of \eqref{eqn:errordecom}}

Next, we turn to the second term in the error decomposition \eqref{eqn:errordecom}. Recall that $F(\bsh')$ is defined in \eqref{eq:def_F_bsh'} and that the set $K'_N=\{\bsh'_1,\dots,\bsh'_N\}\subset \Acal_d(N/2)$ consists of the indices corresponding to the $N$ largest values of $F(\bsh')$. Consequently, the contribution of the Fourier coefficients outside $K'_N$ can be suitably bounded, as proven in the following theorem.

\begin{theorem} \label{lem:fminusfK'}
For any $f\in \Hcal_{d,\alpha,\bsgamma}$, we have
\[
\sum_{\bsh\in \ZZ^d\setminus K'_N}  |\widehat{f}(\bsh)|^2\leq \frac{2\|f\|^2_{d,\alpha,\bsgamma}}{N^{2\alpha}_*},\]
where $N_*$ is as defined in \eqref{eq:N_star_def}.
\end{theorem}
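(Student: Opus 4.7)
The plan is to exploit the defining property of $K'_N$ as the $F$-maximizing subset of $\Acal_d(N/2)$ of size $N$, with $\Acal_{d,\alpha,\bsgamma}(N_*)$ serving as a competitor set. The key preliminary observation is that the assumption $\gamma_\fraku \leq 1$ implies $\Acal_{d,\alpha,\bsgamma}(N_*) \subseteq \Acal_d(N_*)$, which combined with $N_* < N/2$ gives $\Acal_{d,\alpha,\bsgamma}(N_*) \subseteq \Acal_d(N/2)$. By \eqref{eqn:Nstar}, this competitor set has size at most $(N-1)/16 < N$, so it is an admissible alternative.

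The first step is to use the orthonormality of the Fourier basis together with the identity $F(\bsh') = |\widehat{f}(\bsh')|^2 + \sum_{\bsl \in N\cdot\ZZ^d\setminus\{\bszero\}} |\widehat{f}(\bsh'+\bsl)|^2$ to rewrite
\[
\sum_{\bsh\in\ZZ^d\setminus K'_N}|\widehat{f}(\bsh)|^2
= \|f\|^2_{L_2} - \sum_{\bsh'\in K'_N} F(\bsh') + \sum_{\bsh'\in K'_N}\sum_{\bsl\in N\cdot\ZZ^d\setminus\{\bszero\}}|\widehat{f}(\bsh'+\bsl)|^2.
\]
The extremality of $K'_N$ then yields $\sum_{\bsh'\in K'_N} F(\bsh') \geq \sum_{\bsh'\in \Acal_{d,\alpha,\bsgamma}(N_*)} F(\bsh') \geq \sum_{\bsh\in \Acal_{d,\alpha,\bsgamma}(N_*)}|\widehat{f}(\bsh)|^2$, while Corollary~\ref{cor:sumnotinN} (together with $K'_N\subseteq \Acal_d(N/2)$) bounds the last aliasing sum by $\|f\|^2_{d,\alpha,\bsgamma}/N_*^{2\alpha}$.

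Plugging these two estimates back into the identity reduces the problem to bounding $\sum_{\bsh\in\ZZ^d\setminus\Acal_{d,\alpha,\bsgamma}(N_*)}|\widehat{f}(\bsh)|^2 + \|f\|^2_{d,\alpha,\bsgamma}/N_*^{2\alpha}$, and Lemma~\ref{lem:sumnotinL} applied with $L=N_*$ bounds the first term by a second copy of $\|f\|^2_{d,\alpha,\bsgamma}/N_*^{2\alpha}$, which produces the claimed factor of $2$. The only delicate point is establishing the set-theoretic chain that makes $\Acal_{d,\alpha,\bsgamma}(N_*)$ a legitimate size-$\leq N$ competitor inside $\Acal_d(N/2)$; once that is in place, the rest is a direct invocation of already-established tools.
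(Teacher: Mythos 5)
Your proposal is correct and follows essentially the same route as the paper: it uses $\Acal_{d,\alpha,\bsgamma}(N_*)$ (a subset of $\Acal_d(N/2)$ of cardinality at most $(N-1)/16 \le N$) as a competitor against the $F$-maximizing set $K'_N$, bounds the leftover aliasing sum via Corollary~\ref{cor:sumnotinN}, and bounds the tail over $\ZZ^d\setminus\Acal_{d,\alpha,\bsgamma}(N_*)$ via Lemma~\ref{lem:sumnotinL} with $L=N_*$, yielding the factor $2$. The only difference is presentational (you rearrange the identity before invoking extremality, while the paper starts from the tail sum), so the arguments coincide.
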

\begin{proof}
Since $\Acal_{d,\alpha,\bsgamma}(N_*)\subseteq \mathcal{A}_d(N/2)$ and $|\Acal_{d,\alpha,\bsgamma}(N_*)| \leq N$, it follows that
\begin{align*}
    \sum_{\bsh \in \ZZ^d\setminus \Acal_{d,\alpha,\bsgamma}(N_*)}|\widehat{f}(\bsh)|^2
    =& \sum_{\bsh\in \ZZ^d} |\widehat{f}(\bsh)|^2 - \sum_{\bsh \in \Acal_{d,\alpha,\bsgamma}(N_*)}|\widehat{f}(\bsh)|^2 \\
    \geq & \sum_{\bsh\in \ZZ^d} |\widehat{f}(\bsh)|^2 - \sum_{\bsh \in \Acal_{d,\alpha,\bsgamma}(N_*)}F(\bsh)\\
    \geq & \sum_{\bsh\in \ZZ^d} |\widehat{f}(\bsh)|^2 - \sum_{\bsh' \in K'_N}F(\bsh')\\
    = & \sum_{\bsh\in \ZZ^d} |\widehat{f}(\bsh)|^2 -\sum_{\bsh'\in K'_N} \sum_{\bsl\in N\cdot \ZZ^d} |\widehat{f}(\bsh'+\bsl)|^2 \\
    = & \sum_{\bsh\in \ZZ^d\setminus K'_N} |\widehat{f}(\bsh)|^2-\sum_{\bsh'\in K'_N} \sum_{\bsl\in N\cdot \ZZ^d\setminus \{\bszero\}} |\widehat{f}(\bsh'+\bsl)|^2.
\end{align*}
Therefore,
\[
\sum_{\bsh\in \ZZ^d\setminus K'_N} |\widehat{f}(\bsh)|^2 \leq \sum_{\bsh \in \ZZ^d\setminus \Acal_{d,\alpha,\bsgamma}(N_*)}|\widehat{f}(\bsh)|^2+\sum_{\bsh'\in K'_N} \sum_{\bsl\in N\cdot \ZZ^d\setminus \{\bszero\}} |\widehat{f}(\bsh'+\bsl)|^2.
\]
The desired bound follows by applying Lemma~\ref{lem:sumnotinL} and Corollary~\ref{cor:sumnotinN}.
\end{proof}

\subsection{The third term of \eqref{eqn:errordecom}}

Finally, we consider the third term in the error decomposition \eqref{eqn:errordecom}.
In contrast to the first two terms, bounding this term requires a more detailed analysis.
The key idea is to show that, with high probability, all $\bsh' \in K'_N$ corresponding to large values of $|\widehat{f}(\bsh')|^2$ are contained in $K_N$.

To this end, we first establish several lemmas to study the typical size of $|\widehat{f}_{N,\bsz_r,\bsDelta_r}(\bsh)|^2$, 
focusing in particular on its variance with respect to the random shift $\bsDelta_r$ conditioned on a fixed $\bsz_r$.

\begin{lemma} 
For any $r\in\{1{:}R\}$ and $\bsh\in \mathcal{A}_d(N/2)$, we have
\begin{align}
    \int_{[0,1)^d} \bigl|\widehat{f}_{N,\bsz_r,\bsDelta_r}(\bsh)\bigr|^2 \,\mathrm{d}\bsDelta_r
    &= \sum_{\bsl \in P_{N,\boldsymbol{z}_r}^\perp} |\widehat{f}(\bsh+\bsl)|^2, \label{eqn:1stmoment} \\
    \int_{[0,1)^d} \bigl|\widehat{f}_{N,\bsz_r,\bsDelta_r}(\bsh)\bigr|^4 \,\mathrm{d}\bsDelta_r
    &= \sum_{\bsl \in P_{N,\boldsymbol{z}_r}^\perp} \left| \sum_{\bsl' \in P_{N,\boldsymbol{z}_r}^\perp} \widehat{f}(\bsh+\bsl') \, \widehat{f}(\bsh+\bsl-\bsl') \right|^2, \label{eqn:2ndmoment}
\end{align}
where $P_{N,\boldsymbol{z}_r}^\perp$ denotes the dual lattice of $P_{N,\boldsymbol{z}_r}$, as defined in \eqref{eq:def_dual_lattice}.
\end{lemma}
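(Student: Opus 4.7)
The plan is to derive a single closed-form Fourier expansion for $\widehat{f}_{N,\bsz_r,\bsDelta_r}(\bsh)$ as a function of the shift $\bsDelta_r$, and then extract both moments by Parseval's identity on $[0,1)^d$.

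First I would substitute the absolutely convergent Fourier series $f(\bsx)=\sum_{\bsm\in\ZZ^d}\widehat{f}(\bsm)\exp(2\pi\icomp\bsm\cdot\bsx)$ into the definition of $\widehat{f}_{N,\bsz_r,\bsDelta_r}(\bsh)$, swap the finite $k$-sum with the $\bsm$-sum, and use the character-sum identity
\[
\frac{1}{N}\sum_{k=0}^{N-1}\exp\!\left(2\pi\icomp\, k\,\bsz_r^\top\bsl / N\right)=\mathbf{1}\{\bsl\in P_{N,\bsz_r}^\perp\},
\]
with the index shift $\bsl=\bsm-\bsh$. The observation $\exp(2\pi\icomp\bsm\cdot\{\bsy\})=\exp(2\pi\icomp\bsm\cdot\bsy)$ for $\bsm\in\ZZ^d$ removes the fractional-part wrap. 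This yields the key identity
\[
\widehat{f}_{N,\bsz_r,\bsDelta_r}(\bsh)
=\sum_{\bsl\in P_{N,\bsz_r}^\perp}\widehat{f}(\bsh+\bsl)\exp(2\pi\icomp\,\bsl\cdot\bsDelta_r),
\]
which is a pointwise-convergent Fourier series in the variable $\bsDelta_r$ whose coefficients are indexed by the dual lattice.

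From this identity, \eqref{eqn:1stmoment} is immediate by Parseval's theorem on $[0,1)^d$: the $\bsDelta_r$-integral of $|\widehat{f}_{N,\bsz_r,\bsDelta_r}(\bsh)|^2$ equals the sum of squared moduli of the Fourier coefficients, which gives $\sum_{\bsl\in P_{N,\bsz_r}^\perp}|\widehat{f}(\bsh+\bsl)|^2$.

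For \eqref{eqn:2ndmoment}, the step I expect to be the subtle one is noticing that
\[
\bigl|\widehat{f}_{N,\bsz_r,\bsDelta_r}(\bsh)\bigr|^4
=\bigl|\widehat{f}_{N,\bsz_r,\bsDelta_r}(\bsh)^2\bigr|^2,
\]
so one should square the Fourier expansion (without conjugating) rather than multiply it by its conjugate. Squaring gives a double sum over $\bsl',\bsl''\in P_{N,\bsz_r}^\perp$ of $\widehat{f}(\bsh+\bsl')\widehat{f}(\bsh+\bsl'')\exp(2\pi\icomp(\bsl'+\bsl'')\cdot\bsDelta_r)$. Here I would use the crucial fact that $P_{N,\bsz_r}^\perp$ is a \emph{subgroup} of $\ZZ^d$, so re-indexing by $\bsl=\bsl'+\bsl''\in P_{N,\bsz_r}^\perp$ is legitimate and the convolution coefficient at frequency $\bsl$ is exactly $\sum_{\bsl'\in P_{N,\bsz_r}^\perp}\widehat{f}(\bsh+\bsl')\widehat{f}(\bsh+\bsl-\bsl')$. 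A second application of Parseval's identity then yields the claimed expression.

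The only analytic concern is justifying the interchanges of sums and the integral with the infinite Fourier expansion; this is routine since $\alpha>1/2$ forces $\sum_{\bsh}|\widehat{f}(\bsh)|<\infty$, so the series for $f$ converges absolutely and uniformly, and the $\bsDelta_r$-series above is dominated by $\sum_{\bsl\in P_{N,\bsz_r}^\perp}|\widehat{f}(\bsh+\bsl)|<\infty$. Thus the main substantive obstacle is simply spotting the $|g|^4=|g^2|^2$ trick together with the subgroup property of $P_{N,\bsz_r}^\perp$; once these are in place, the identities follow by direct computation.
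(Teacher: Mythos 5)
Your proposal is correct and takes essentially the same route as the paper: the shift expansion $\widehat{f}_{N,\bsz_r,\bsDelta_r}(\bsh)=\sum_{\bsl\in P_{N,\bsz_r}^\perp}\widehat{f}(\bsh+\bsl)\exp(2\pi\icomp\,\bsl\cdot\bsDelta_r)$ (which the paper quotes from \cite{CGK24} and you re-derive directly), orthonormality/Parseval for the second moment, and the $|g|^4=|g^2|^2$ squaring without conjugation together with the subgroup re-indexing $\bsL=\bsl+\bsl'$ in the dual lattice for the fourth moment.
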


\begin{proof}
The first equality follows similarly to \eqref{eqn:deltamean}. 
For the second equality, we start from \cite[Proof of Lemma 4.1]{CGK24}:
\[
\widehat{f}_{N,\bsz_r,\bsDelta_r}(\bsh) = \sum_{\bsl \in P_{N,\boldsymbol{z}_r}^\perp} \widehat{f}(\bsh+\bsl)\, \exp(2\pi\icomp \bsl\cdot\bsDelta_r).
\]
It follows that
\[ 
    |\widehat{f}_{N,\bsz_r,\bsDelta_r}(\bsh)|^4=\abs{\sum_{\bsl,\bsl' \in P_{N,\boldsymbol{z}_r}^\perp}\widehat{f}(\bsh+\bsl) \widehat{f}(\bsh+\bsl')\exp(2\pi\icomp (\bsl+\bsl')\cdot\bsDelta_r)}^2.
\]

Setting $\bsL = \bsl + \bsl'$ and defining
\[
\widehat{f}_*(\bsL) := \sum_{\bsl' \in P_{N,\boldsymbol{z}_r}^\perp} \widehat{f}(\bsh+\bsL-\bsl')\, \widehat{f}(\bsh+\bsl'),
\]
we can rewrite the double sum as
\[
\sum_{\bsl,\bsl' \in P_{N,\boldsymbol{z}_r}^\perp}
\widehat{f}(\bsh+\bsl)\, \widehat{f}(\bsh+\bsl')\, e^{2\pi i (\bsl+\bsl')\cdot \bsDelta_r} 
= \sum_{\bsL \in P_{N,\boldsymbol{z}_r}^\perp}\widehat{f}_*(\bsL)\exp(2\pi\icomp \bsL\cdot\bsDelta_r).
\]
Finally, integrating over $\bsDelta_r \in [0,1)^d$ and using the orthonormality of the Fourier basis gives
\[
      \int_{[0,1)^d} |\widehat{f}_{N,\bsz_r,\bsDelta_r}(\bsh)|^4\rd\bsDelta_r
      =\int_{[0,1)^d} \abs{\sum_{\bsL \in P_{N,\boldsymbol{z}_r}^\perp}\widehat{f}_*(\bsL)\exp(2\pi\icomp \bsL\cdot\bsDelta_r)}^2\rd\bsDelta_r
      = \sum_{\bsL \in P_{N,\boldsymbol{z}_r}^\perp} |\widehat{f}_*(\bsL)|^2. \qedhere
\]
\end{proof}

\begin{lemma}\label{lem:2ndmomentbound} 
For any $r\in\{1{:}R\}$ and $\bsh\in \mathcal{A}_d(N/2)$, define
\begin{align*}
    r_{\min}(\bsh) & :=\min_{\bsl\in P_{N,\boldsymbol{z}_r}^\perp\setminus \{\bszero\}}r_{2\alpha, \bsgamma}(\bsh+\bsl),\\
    r_{\mathrm{sum}}(\bsh) & :=\sum_{\bsl\in P_{N,\boldsymbol{z}_r}^\perp\setminus \{\bszero\}} \frac{1}{r^2_{2\alpha, \bsgamma}(\bsh+\bsl)},\\
    S(\bsh) & :=\left(\sum_{\bsl\in P_{N,\boldsymbol{z}_r}^\perp\setminus \{\bszero\} }|\widehat{f}(\bsh+\bsl)|^2r_{2\alpha, \bsgamma}(\bsh+\bsl)\right)^{1/2}.
\end{align*}
Then we have
\[ 
    \sigma^2(\bsh):=\int_{[0,1)^d} |\widehat{f}_{N,\bsz_r,\bsDelta_r}(\bsh)|^4\rd\bsDelta_r-\left(\int_{[0,1)^d} |\widehat{f}_{N,\bsz_r,\bsDelta_r}(\bsh)|^2\rd\bsDelta_r\right)^2 \leq  \frac{8|\widehat{f}(\bsh)|^2}{r_{\min}(\bsh)}S(\bsh)^2+2r_{\mathrm{sum}}(\bsh)S(\bsh)^4.
\]
\end{lemma}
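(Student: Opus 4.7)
The plan is to reformulate $\sigma^2(\bsh)$ as an $\ell^2$ sum of Fourier coefficients of $|\widehat{f}_{N,\bsz_r,\bsDelta_r}(\bsh)|^2$ viewed as a function of $\bsDelta_r$, separate out the contributions involving $c_{\bszero}:=\widehat{f}(\bsh)$ by an algebraic split, and control the remainder via Cauchy--Schwarz with weights chosen to produce the factors $r_{\min}(\bsh)$ and $r_{\mathrm{sum}}(\bsh)$.

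Writing $c_{\bsl}:=\widehat{f}(\bsh+\bsl)$ and $\rho_{\bsl}:=r_{2\alpha,\bsgamma}(\bsh+\bsl)$ for $\bsl\in P_{N,\bsz_r}^\perp$, I would start from the expansion $\widehat{f}_{N,\bsz_r,\bsDelta_r}(\bsh)=\sum_{\bsl\in P_{N,\bsz_r}^\perp}c_{\bsl}\exp(2\pi\icomp\bsl\cdot\bsDelta_r)$ recalled in the proof of \eqref{eqn:2ndmoment}, to obtain
\[
\bigl|\widehat{f}_{N,\bsz_r,\bsDelta_r}(\bsh)\bigr|^2 = \sum_{\bsk\in P_{N,\bsz_r}^\perp}T_{\bsk}\exp(2\pi\icomp\bsk\cdot\bsDelta_r), \qquad T_{\bsk}:=\sum_{\bsl\in P_{N,\bsz_r}^\perp}c_{\bsl}\,\overline{c_{\bsl-\bsk}}.
\]
Since $P_{N,\bsz_r}^\perp$ is a subgroup of $\ZZ^d$, Parseval's identity on $[0,1)^d$ yields $\int_{[0,1)^d}|\widehat{f}_{N,\bsz_r,\bsDelta_r}(\bsh)|^4\rd\bsDelta_r=\sum_{\bsk}|T_{\bsk}|^2$, while \eqref{eqn:1stmoment} identifies $\int_{[0,1)^d}|\widehat{f}_{N,\bsz_r,\bsDelta_r}(\bsh)|^2\rd\bsDelta_r=T_{\bszero}=\sum_{\bsl}|c_{\bsl}|^2\ge 0$. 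Subtracting the squares leads to the representation $\sigma^2(\bsh)=\sum_{\bsk\in P_{N,\bsz_r}^\perp\setminus\{\bszero\}}|T_{\bsk}|^2$, which is then what I need to bound.

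For each fixed $\bsk\neq\bszero$, the only summands in $T_{\bsk}$ containing $c_{\bszero}$ correspond to $\bsl=\bszero$ and $\bsl=\bsk$. I would accordingly write $T_{\bsk}=c_{\bszero}\overline{c_{-\bsk}}+c_{\bsk}\overline{c_{\bszero}}+T'_{\bsk}$, with $T'_{\bsk}:=\sum_{\bsl\in P_{N,\bsz_r}^\perp\setminus\{\bszero,\bsk\}}c_{\bsl}\overline{c_{\bsl-\bsk}}$, and apply $|a+b|^2\le 2|a|^2+2|b|^2$ twice to obtain $|T_{\bsk}|^2\le 4|c_{\bszero}|^2(|c_{-\bsk}|^2+|c_{\bsk}|^2)+2|T'_{\bsk}|^2$. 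Summing the first contribution over $\bsk\neq\bszero$ yields $8|c_{\bszero}|^2\sum_{\bsk\neq\bszero}|c_{\bsk}|^2$, and this is bounded by $8|\widehat{f}(\bsh)|^2S(\bsh)^2/r_{\min}(\bsh)$ after multiplying and dividing each term by $\rho_{\bsk}$ and using $\rho_{\bsk}\ge r_{\min}(\bsh)$.

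The main obstacle is controlling $\sum_{\bsk\neq\bszero}|T'_{\bsk}|^2$ by $r_{\mathrm{sum}}(\bsh)S(\bsh)^4$. I plan to apply Cauchy--Schwarz to $T'_{\bsk}$ with the splitting $c_{\bsl}\overline{c_{\bsl-\bsk}}=\bigl(1/\sqrt{\rho_{\bsl}\rho_{\bsl-\bsk}}\bigr)\bigl(c_{\bsl}\overline{c_{\bsl-\bsk}}\sqrt{\rho_{\bsl}\rho_{\bsl-\bsk}}\bigr)$, yielding
\[
|T'_{\bsk}|^2 \le \biggl(\sum_{\bsl\neq\bszero,\bsk}\frac{1}{\rho_{\bsl}\rho_{\bsl-\bsk}}\biggr)\biggl(\sum_{\bsl\neq\bszero,\bsk}|c_{\bsl}|^2\rho_{\bsl}\,|c_{\bsl-\bsk}|^2\rho_{\bsl-\bsk}\biggr).
\]
The AM--GM inequality $1/(\rho_{\bsl}\rho_{\bsl-\bsk})\le \tfrac{1}{2}(1/\rho_{\bsl}^2+1/\rho_{\bsl-\bsk}^2)$, together with the substitution $\boldsymbol{m}=\bsl-\bsk$ in the second summand, bounds the first factor by $r_{\mathrm{sum}}(\bsh)$. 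Exchanging the order of summation over $\bsk$ and $\bsl$ in the second factor and again applying $\boldsymbol{m}=\bsl-\bsk$ collapses the double sum into $\bigl(\sum_{\bsl\neq\bszero}|c_{\bsl}|^2\rho_{\bsl}\bigr)^2=S(\bsh)^4$. Combined with the factor $2$ from the initial triangle inequality, this produces the second term of the stated bound. The delicate step is tuning the Cauchy--Schwarz weights so that both $r_{\mathrm{sum}}(\bsh)$ and $S(\bsh)^4$ emerge simultaneously after the relabeling.
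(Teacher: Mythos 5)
Your proposal is correct and follows essentially the same route as the paper: after reducing the variance to a sum over nonzero dual-lattice frequencies, you split off the two terms containing $\widehat{f}(\bsh)$, apply $|a+b|^2\le 2|a|^2+2|b|^2$, and handle the remainder by Cauchy--Schwarz with the weights $r_{2\alpha,\bsgamma}$, AM--GM, and an exchange of summation, exactly as in the paper. The only (harmless) difference is at the first step, where you expand $|\widehat{f}_{N,\bsz_r,\bsDelta_r}(\bsh)|^2$ in $\bsDelta_r$ and get the exact identity $\sigma^2(\bsh)=\sum_{\bsk\neq\bszero}|T_{\bsk}|^2$ via Parseval, whereas the paper expands the square of the estimator (formula \eqref{eqn:2ndmoment}) and bounds the zero-frequency term, arriving at the analogous inequality \eqref{eqn:varbound}.
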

\begin{proof}
    We first bound the $\bsl = \bszero$ summand in \eqref{eqn:2ndmoment}:
    \[
    \left|\sum_{\bsl'\in P_{N,\boldsymbol{z}_r}^\perp}\widehat{f}(\bsh+\bsl')\widehat{f}(\bsh-\bsl')\right|\leq \sum_{\bsl'\in P_{N,\boldsymbol{z}_r}^\perp}\frac{|\widehat{f}(\bsh+\bsl')|^2+|\widehat{f}(\bsh-\bsl')|^2}{2}=\sum_{\bsl' \in P_{N,\boldsymbol{z}_r}^\perp}|\widehat{f}(\bsh+\bsl')|^2.
    \]
    Then, by \eqref{eqn:1stmoment} and \eqref{eqn:2ndmoment},
    \begin{equation}\label{eqn:varbound}
        \sigma^2(\bsh)
    \leq \sum_{\bsl \in P_{N,\boldsymbol{z}_r}^\perp\setminus \{\bszero\}}\abs{\sum_{\bsl'\in P_{N,\boldsymbol{z}_r}^\perp}\widehat{f}(\bsh+\bsl')\widehat{f}(\bsh+\bsl-\bsl')}^2.
    \end{equation}
    
    For any $\bsl \in P_{N,\boldsymbol{z}_r}^\perp\setminus \{\bszero\}$, we can decompose
    \[ 
    \sum_{\bsl'\in P_{N,\boldsymbol{z}_r}^\perp}\widehat{f}(\bsh+\bsl')\widehat{f}(\bsh+\bsl-\bsl')=2\widehat{f}(\bsh)\widehat{f}(\bsh+\bsl)+\sum_{\bsl'\in P_{N,\boldsymbol{z}_r}^\perp\setminus\{\bszero,\bsl\} }\widehat{f}(\bsh+\bsl')\widehat{f}(\bsh+\bsl-\bsl').
    \]
    By the Cauchy--Schwarz inequality,
    \begin{align}\label{eqn:fhfhl}
       & \abs{\sum_{\bsl'\in P_{N,\boldsymbol{z}_r}^\perp}\widehat{f}(\bsh+\bsl')\widehat{f}(\bsh+\bsl-\bsl')}^2 \nonumber \\
       & = \abs{2\widehat{f}(\bsh)\widehat{f}(\bsh+\bsl)+\sum_{\bsl'\in P_{N,\boldsymbol{z}_r}^\perp\setminus\{\bszero,\bsl\} }\widehat{f}(\bsh+\bsl')\widehat{f}(\bsh+\bsl-\bsl')}^2\nonumber  \\
       & \leq 8|\widehat{f}(\bsh)|^2|\widehat{f}(\bsh+\bsl)|^2+2\abs{\sum_{\bsl'\in P_{N,\boldsymbol{z}_r}^\perp\setminus\{\bszero,\bsl\} }\widehat{f}(\bsh+\bsl')\widehat{f}(\bsh+\bsl-\bsl')}^2.
    \end{align}
    The second term can be further bounded by
    \begin{align*}
        & \abs{\sum_{\bsl'\in P_{N,\boldsymbol{z}_r}^\perp\setminus\{\bszero,\bsl\} }\widehat{f}(\bsh+\bsl')\widehat{f}(\bsh+\bsl-\bsl')}^2 \\
        & \leq \left(\sum_{\bsl'\in P_{N,\boldsymbol{z}_r}^\perp\setminus\{\bszero,\bsl\} }\frac{1}{r_{2\alpha, \bsgamma}(\bsh+\bsl')r_{2\alpha, \bsgamma}(\bsh+\bsl-\bsl')}\right) \\
        & \quad \times  \left(\sum_{\bsl'\in P_{N,\boldsymbol{z}_r}^\perp\setminus\{\bszero,\bsl\} }|\widehat{f}(\bsh+\bsl')|^2|\widehat{f}(\bsh+\bsl-\bsl')|^2r_{2\alpha, \bsgamma}(\bsh+\bsl')r_{2\alpha, \bsgamma}(\bsh+\bsl-\bsl')\right).
    \end{align*}
    Since we have 
    \begin{align*}
        &\sum_{\bsl'\in P_{N,\boldsymbol{z}_r}^\perp\setminus\{\bszero,\bsl\} }\frac{1}{r_{2\alpha, \bsgamma}(\bsh+\bsl')r_{2\alpha, \bsgamma}(\bsh+\bsl-\bsl')} \\
    & \leq \sum_{\bsl'\in P_{N,\boldsymbol{z}_r}^\perp\setminus\{\bszero,\bsl\} }\frac{1}{2r^2_{2\alpha, \bsgamma}(\bsh+\bsl')}+\sum_{\bsl'\in P_{N,\boldsymbol{z}_r}^\perp\setminus\{\bszero,\bsl\} }\frac{1}{2r^2_{2\alpha, \bsgamma}(\bsh+\bsl-\bsl')} \leq r_{\mathrm{sum}}(\bsh),
    \end{align*}
    for any $\bsl \in P_{N,\boldsymbol{z}_r}^\perp\setminus \{\bszero\}$, we obtain
    \begin{align*}
        & \sum_{\bsl \in P_{N,\boldsymbol{z}_r}^\perp\setminus \{\bszero\}}\abs{\sum_{\bsl'\in P_{N,\boldsymbol{z}_r}^\perp\setminus\{\bszero,\bsl\} }\widehat{f}(\bsh+\bsl')\widehat{f}(\bsh+\bsl-\bsl')}^2 \\
        & \leq r_{\mathrm{sum}}(\bsh)\sum_{\bsl \in P_{N,\boldsymbol{z}_r}^\perp\setminus \{\bszero\}}\,\,\sum_{\bsl'\in P_{N,\boldsymbol{z}_r}^\perp\setminus\{\bszero,\bsl\} }|\widehat{f}(\bsh+\bsl')|^2|\widehat{f}(\bsh+\bsl-\bsl')|^2r_{2\alpha, \bsgamma}(\bsh+\bsl')r_{2\alpha, \bsgamma}(\bsh+\bsl-\bsl')\\
        & = r_{\mathrm{sum}}(\bsh)\sum_{\bsl' \in P_{N,\boldsymbol{z}_r}^\perp\setminus \{\bszero\}}|\widehat{f}(\bsh+\bsl')|^2 r_{2\alpha, \bsgamma}(\bsh+\bsl')\sum_{\bsl\in P_{N,\boldsymbol{z}_r}^\perp\setminus\{\bszero,\bsl'\} }|\widehat{f}(\bsh+\bsl-\bsl')|^2r_{2\alpha, \bsgamma}(\bsh+\bsl-\bsl')\\
        & \leq r_{\mathrm{sum}}(\bsh)\sum_{\bsl' \in P_{N,\boldsymbol{z}_r}^\perp\setminus \{\bszero\}}|\widehat{f}(\bsh+\bsl')|^2 r_{2\alpha, \bsgamma}(\bsh+\bsl') S(\bsh)^2 = r_{\mathrm{sum}}(\bsh) S(\bsh)^4.
    \end{align*}
    
    It follows from \eqref{eqn:fhfhl} and the above bound that
    \begin{align*}
        \sum_{\bsl \in P_{N,\boldsymbol{z}_r}^\perp\setminus \{\bszero\}}\abs{\sum_{\bsl'\in P_{N,\boldsymbol{z}_r}^\perp}\widehat{f}(\bsh+\bsl')\widehat{f}(\bsh+\bsl-\bsl')}^2
        & \leq 8|\widehat{f}(\bsh)|^2\sum_{\bsl \in P_{N,\boldsymbol{z}_r}^\perp\setminus \{\bszero\}}|\widehat{f}(\bsh+\bsl)|^2+2r_{\mathrm{sum}}(\bsh) S(\bsh)^4 \\
        & \leq \frac{8|\widehat{f}(\bsh)|^2}{r_{\min}(\bsh)} S(\bsh)^2 +2r_{\mathrm{sum}}(\bsh) S(\bsh)^4,
    \end{align*}
    which completes the proof because of \eqref{eqn:varbound}.
\end{proof}

\begin{lemma}\label{lem:goodlattice}
Let $\bsh\in \Acal_d (N/2)$. Define
\begin{equation}\label{eq:def_Sh}
   \Bar{S}(\bsh) := 4\left(
\frac{\|f\|^2_{d,\alpha,\bsgamma}}{N-1} + 
\sum_{\bsl \in N \cdot \ZZ^d \setminus \{\bszero\}} |\widehat{f}(\bsh+\bsl)|^2 r_{2\alpha,\bsgamma}(\bsh+\bsl)
\right)^{1/2}. 
\end{equation}
Then, for any $f \in \Hcal_{d,\alpha,\bsgamma}$, the following bounds hold:
\begin{equation}\label{eqn:Shbound}
  \Pr\left(S(\bsh)^2\ge\Bar{S}(\bsh)^2\right)\leq \frac{1}{16}   
\end{equation}
and 
\[ 
\Pr\left(\{r_{\min}(\bsh)< N^{2\alpha}_*\}\cup \{r_{\mathrm{sum}}(\bsh)\ge \Bar{r}_{\mathrm{sum}}\}\right)\leq \frac{1}{8},
\]
where $N_*$ and $\Bar{r}_{\mathrm{sum}}$ are as defined in \eqref{eq:N_star_def}  and \eqref{eq:def_r_sum}, respectively.
\end{lemma}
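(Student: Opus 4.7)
The plan is to handle each of the three tail probabilities separately via a first-moment computation over the random $\bsz_r$, then invoke either Markov's inequality or Lemma~\ref{lem:nointersect}, and finally combine via a union bound. The essential probabilistic input throughout is \eqref{eqn:ellinP}: for any fixed $\bsl \in \ZZ^d\setminus N\cdot\ZZ^d$ one has $\Pr(\bsl \in P_{N,\bsz_r}^\perp) \le 1/(N-1)$, while every $\bsl \in N\cdot\ZZ^d$ lies in $P_{N,\bsz_r}^\perp$ deterministically.

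For the bound \eqref{eqn:Shbound}, I would write $S(\bsh)^2 = \sum_{\bsl \neq \bszero} |\widehat{f}(\bsh+\bsl)|^2 r_{2\alpha,\bsgamma}(\bsh+\bsl) \bsone\{\bsl \in P_{N,\bsz_r}^\perp\}$, take expectation, and split the resulting sum at $N\cdot\ZZ^d$ to obtain
\[
\EE[S(\bsh)^2] \le \sum_{\bsl \in N\cdot\ZZ^d\setminus\{\bszero\}} |\widehat{f}(\bsh+\bsl)|^2 r_{2\alpha,\bsgamma}(\bsh+\bsl) + \frac{\|f\|^2_{d,\alpha,\bsgamma}}{N-1} = \frac{\Bar{S}(\bsh)^2}{16},
\]
where the non-$N\cdot\ZZ^d$ piece is bounded trivially by $\|f\|^2_{d,\alpha,\bsgamma}$ after substituting $\bsk=\bsh+\bsl$. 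Markov's inequality then closes this case. For $r_{\min}(\bsh)$, the key deterministic observation is that any $\bsl \in N\cdot\ZZ^d\setminus\{\bszero\}$ has some coordinate $|\ell_{j}|\ge N$, so for $\bsh \in \Acal_d(N/2)$ one gets $|h_j+\ell_j|\ge N/2$; combined with $\gamma_\fraku\le 1$ this yields $r_{2\alpha,\bsgamma}(\bsh+\bsl) \ge (N/2)^{2\alpha} > N_*^{2\alpha}$. Hence $\{r_{\min}(\bsh) < N_*^{2\alpha}\}$ coincides with $\{P_{N,\bsz_r}^\perp \cap \Bcal_{\bsh} \neq \emptyset\}$, whose probability is at most $1/16$ by Lemma~\ref{lem:nointersect}.

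The more delicate piece is $r_{\mathrm{sum}}(\bsh)$. I would estimate $\EE[r_{\mathrm{sum}}(\bsh)\,\bsone\{r_{\min}(\bsh) \ge N_*^{2\alpha}\}]$, noting that on this good event every summand has $r_{2\alpha,\bsgamma}(\bsh+\bsl) \ge N_*^{2\alpha}$. Splitting again by $\bsl \in N\cdot\ZZ^d\setminus\{\bszero\}$ versus $\bsl \in \ZZ^d\setminus N\cdot\ZZ^d$, the first piece is purely deterministic: writing $\bsl=N\bsm$, using downward-closedness to bound $\gamma_{\supp(\bsh+N\bsm)}^{-1}\ge \gamma_{\supp(\bsm)}^{-1}$, and exploiting $|h_j+Nm_j|\ge N|m_j|/2$ for $j\in\supp(\bsm)$, the sum factorizes over coordinates and yields exactly $N^{-4\alpha}\sum_{\emptyset \neq \fraku}\gamma_\fraku^2(2^{1+4\alpha}\zeta(4\alpha))^{|\fraku|}$. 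For the non-$N\cdot\ZZ^d$ piece, the factor $1/(N-1)$ comes from \eqref{eqn:ellinP}; substituting $\bsk=\bsh+\bsl$ and enlarging the summation to all $\bsk$ with $r_{2\alpha,\bsgamma}(\bsk) \ge N_*^{2\alpha}$, I would apply the layer-cake identity $\rho^{-4\alpha} = 4\alpha\int_\rho^\infty L^{-4\alpha-1}\rd L$ with $\rho=r_{2\alpha,\bsgamma}(\bsk)^{1/(2\alpha)}$, then swap sum and integral to convert the lattice sum into $4\alpha\int_{N_*}^\infty |\Acal_{d,\alpha,\bsgamma}(L)|\,L^{-4\alpha-1}\rd L$. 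Adding the two contributions matches $\Bar{r}_{\mathrm{sum}}/16$, and Markov gives $\Pr(r_{\mathrm{sum}}(\bsh)\ge \Bar{r}_{\mathrm{sum}},\ r_{\min}(\bsh) \ge N_*^{2\alpha}) \le 1/16$; combined with the $r_{\min}$ bound via a union bound, this yields the claimed $1/8$.

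The main obstacle I anticipate is the layer-cake step: verifying that enlarging the summation domain to $\{\bsk : r_{2\alpha,\bsgamma}(\bsk) \ge N_*^{2\alpha}\}$ is not too wasteful and that the resulting integral reproduces precisely the constants prescribed in $\Bar{r}_{\mathrm{sum}}$. The other bookkeeping step is ensuring that the $N\cdot\ZZ^d$ contribution factorizes cleanly over $\fraku$; downward-closedness of $\bsgamma$ together with $\bsh\in\Acal_d(N/2)$ is precisely what makes this work.
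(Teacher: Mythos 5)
Your proposal is correct and follows essentially the same route as the paper: the same expectation-plus-Markov argument for $\Bar{S}(\bsh)$, the same identification of $\{r_{\min}(\bsh)<N_*^{2\alpha}\}$ with $\{P_{N,\bsz_r}^\perp\cap\Bcal_{\bsh}\neq\emptyset\}$ via Lemma~\ref{lem:nointersect}, and the same split of $\EE[r_{\mathrm{sum}}(\bsh)\bsone\{r_{\min}(\bsh)\ge N_*^{2\alpha}\}]$ into the deterministic $N\cdot\ZZ^d$ part and the $1/(N-1)$-weighted part, finished by Markov and a union bound. The only cosmetic difference is that you convert the tail sum into the integral $4\alpha\int_{N_*}^\infty |\Acal_{d,\alpha,\bsgamma}(L)|L^{-4\alpha-1}\rd L$ via the layer-cake identity and Fubini, whereas the paper phrases the identical computation as Riemann--Stieltjes integration by parts on the counting function $R_{\bsh}(L)$.
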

\begin{proof}
First, we compute
\begin{align*}
   & \frac{1}{(N-1)^d}\sum_{\bsz_r\in \{1{:}(N-1)\}^d}S(\bsh)^2 \\
   & =\frac{1}{(N-1)^d}\sum_{\bsz_r\in \{1{:}(N-1)\}^d}\sum_{\bsl\in P_{N,\boldsymbol{z}_r}^\perp\setminus \{\bszero\}}|\widehat{f}(\bsh+\bsl)|^2 r_{2\alpha,\bsgamma}(\bsh+\bsl)\\
   & = \sum_{\bsl \in \ZZ^d\setminus (N\cdot \ZZ^d)}|\widehat{f}(\bsh+\bsl)|^2r_{2\alpha, \bsgamma}(\bsh+\bsl)\left(\frac{1}{(N-1)^d}\sum_{\bsz_r\in \{1{:}(N-1)\}^d}\bsone\{\bsl \in P_{N,\boldsymbol{z}_r}^\perp\}\right) \\
   & \quad +\sum_{\bsl \in N\cdot \ZZ^d\setminus \{\bszero\} }|\widehat{f}(\bsh+\bsl)|^2 r_{2\alpha, \bsgamma}(\bsh+\bsl) \\
   & \le \frac{\|f\|^2_{d,\alpha,\bsgamma}}{N-1 }+\sum_{\bsl \in N\cdot \ZZ^d\setminus \{\bszero\} }|\widehat{f}(\bsh+\bsl)|^2r_{2\alpha, \bsgamma}(\bsh+\bsl),
\end{align*}
where we have used the bound \eqref{eqn:ellinP}. Then, the first statement \eqref{eqn:Shbound} follows from Markov's inequality.

Next, since $N_* < N/2$ and $r_{2\alpha,\bsgamma}(\bsh+\bsl) > (N/2)^{2\alpha}$ for 
$\bsl \in N\cdot \ZZ^d\setminus \{\bszero\}$, we have 
$r_{\min}(\bsh) < N_*^{2\alpha}$ if and only if 
$P_{N,\boldsymbol{z}_r}^\perp \cap \mathcal{B}_{\bsh} \neq \emptyset$. 
Hence, $\Pr(r_{\min}(\bsh) < N_*^{2\alpha}) \leq 1/16$ by Lemma~\ref{lem:nointersect}.

Finally, to bound $\Pr(r_{\mathrm{sum}}(\bsh) \geq \Bar{r}_{\mathrm{sum}})$, define 
\[
R_{\bsh}(L)
:= |\{\bsl \in P_{N,\boldsymbol{z}_r}^\perp\setminus (N\cdot \ZZ^d):
      r_{2\alpha,\bsgamma}(\bsh+\bsl) \leq L^{2\alpha}\}|.
\]
When $r_{\min}(\bsh) \geq N_*^{2\alpha}$, 
integration by parts for the Riemann--Stieltjes integral yields
    \begin{align*}
    \sum_{\bsl\in P_{N,\boldsymbol{z}_r}^\perp\setminus (N\cdot \ZZ^d) } \frac{1}{r^2_{2\alpha, \bsgamma}(\bsh+\bsl)}
    & = \int_{N_*}^\infty L^{-4\alpha}\rd R_{\bsh}(L) = R_{\bsh}(L) L^{-4\alpha}\bigg\vert_{N_*}^\infty -
    \int_{N_*}^\infty R_{\bsh}(L) \rd L^{-4\alpha} \\
    & = 4\alpha \int_{N_*}^\infty R_{\bsh}(L) L^{-4\alpha-1} \rd L,
    \end{align*}
    where, in the last equality, we have used the fact that $R_{\bsh}(L)\leq |\Acal_{d,\alpha,\bsgamma}(L)|$ is dominated by $L^{4\alpha}$. 
    Since it follows from \eqref{eqn:ellinP} that
    \[
        \e[R_{\bsh}(L)] = \frac{1}{N-1}\, |\{\bsl \in \ZZ^d\setminus (N\cdot \ZZ^d): r_{2\alpha,\bsgamma}(\bsh+\bsl)\leq L^{2\alpha}\}| \leq \frac{|\Acal_{d,\alpha,\bsgamma}(L)|}{N-1},
    \]
    we obtain
    \begin{align*}
        \e[r_{\mathrm{sum}}(\bsh)\bsone\{r_{\min}(\bsh)> N^{2\alpha}_*\}]\leq &\int_{N_*}^\infty \frac{|\Acal_{d,\alpha,\bsgamma}(L)|}{N-1} 4\alpha L^{-4\alpha-1} \rd L+\sum_{\bsl\in N\cdot \ZZ^d\setminus\{\bszero\}}\frac{1}{r^2_{2\alpha, \bsgamma}(\bsh+\bsl)}. 
    \end{align*}
    
    It remains to bound the second term on the right-hand side. 
    Since $\bsh=(h_1,\dots,h_d)\in \Acal_d(N/2)$, we have $|h_j|\le N/2$ for all $j$.  
    For $\bsl\in N\cdot \ZZ^d$, write $\ell_j = k_jN$ with $k_j\in\ZZ$.  
    If $k_j=0$ then $|\ell_j+h_j| \geq 0$, 
    while if $k_j\neq 0$ then
    \[
        |\ell_j+h_j| \geq (|k_j|-1)N + \frac{N}{2},
    \]
    so in particular $\ell_j+h_j \neq 0$ when $k_j\neq 0$.
    Writing $(\bsl_{\fraku},\bszero_{-\fraku})$ 
    for the vector with components $\ell_j$ if $j\in\fraku$ and zero otherwise, the above observation implies that
    \[ \mathrm{supp}(\bsh+(\bsl_{\fraku},\bszero_{-\fraku})) \supseteq \fraku, \]
    for any $\bsh\in \Acal_d(N/2)$ and $\bsl_{\fraku}\in (N\cdot \ZZ\setminus \{0\})^{|\fraku|}$. Thus, we have
    \begin{align*}
    \sum_{\bsl\in N\cdot \ZZ^d\setminus\{\bszero\}}\frac{1}{r^2_{2\alpha, \bsgamma}(\bsh+\bsl)} 
    & = 
    \sum_{\emptyset \neq \fraku \subseteq \{1{:}d\}}
    \sum_{\bsl_{\fraku}\in (N\cdot \ZZ\setminus\{\bszero\})^{\abs{\fraku}}}\frac{1}{r^2_{2\alpha, \bsgamma}(\bsh+(\bsl_{\fraku},\bszero_{-\fraku}))}\\
    &= \sum_{\emptyset \neq \fraku \subseteq \{1{:}d\}} 
     \sum_{\bsl_{\fraku}\in (N\cdot \ZZ\setminus\{\bszero\})^{\abs{\fraku}}} 
     \gamma^{2}_{\mathrm{supp}(\bsh+(\bsl_{\fraku},\bszero_{-\fraku}))}
    \abs{\bsh+(\bsl_{\fraku},\bszero_{-\fraku})}^{-4\alpha}\\
    &\le \sum_{\emptyset \neq \fraku \subseteq \{1{:}d\}} \gamma^{2}_{\fraku}
    \sum_{\bsl_{\fraku}\in (N\cdot \ZZ\setminus\{\bszero\})^{\abs{\fraku}}}
    \prod_{j\in\fraku}(h_j + \ell_j)^{-4\alpha}\\
    &\le \sum_{\emptyset \neq \fraku \subseteq \{1{:}d\}} \gamma^{2}_{\fraku}
    \left(2\sum_{k=1}^\infty ((k-1)N+N/2)^{-4\alpha}\right)^{\abs{\fraku}}\\
    &= \sum_{\emptyset \neq \fraku \subseteq \{1{:}d\}} \gamma^{2}_{\fraku}
    \left(\frac{2^{1+4\alpha}}{N^{4\alpha}}\sum_{k=1}^\infty (2k-1)^{-4\alpha}\right)^{\abs{\fraku}}\\
    &\le \frac{1}{N^{4\alpha}} 
    \sum_{\emptyset \neq \fraku \subseteq \{1{:}d\}} \gamma^{2}_{\fraku}
    \left(2^{1+4\alpha}\zeta(4\alpha)\right)^{\abs{\fraku}},
    \end{align*}
    where we have used the assumption that $\gamma_{\mathfrak{v}}\le\gamma_{\mathfrak{w}}$ 
    whenever $\mathfrak{w}\subseteq \mathfrak{v}$. Combining the above expectation bound with Markov’s inequality, and recalling that $\Pr(r_{\min}(\bsh)< N^{2\alpha}_*)\leq 1/16$, we obtain the desired probability estimate.
\end{proof}

Now we are ready to show that, with high probability, every $\bsh\notin K'_N$ has a small value of 
\(\operatorname{median}_{r\in\{1{:}R\}} \lvert \widehat{f}_{N,\bsz_r,\bsDelta_r}(\bsh)\rvert^2\), 
while every \(\bsh'\in K'_N\) with a large value of \(\lvert\widehat{f}(\bsh')\rvert^2\) 
also attains a correspondingly large 
\(\operatorname{median}_{r\in\{1{:}R\}} \lvert \widehat{f}_{N,\bsz_r,\bsDelta_r}(\bsh')\rvert^2\).
These two facts together imply that, except with small probability, 
the ordering of frequencies induced by the medians in Step 2 of Algorithm~\ref{alg:median} 
correctly separates the significant Fourier coefficients (which should belong to \(K_N\)) 
from the insignificant ones (which should lie outside \(K_N\)); 
this is the key ingredient for controlling the third summand in \eqref{eqn:errordecom}.

\begin{lemma}\label{lem:mediannorm}
    For $\bsh\in \mathcal{A}_d(N/2)$ and $f\in \Hcal_{d,\alpha,\bsgamma}$, let
\[
\delta'(\bsh) = 4 \left(\frac{\|f\|^2_{d,\alpha,\bsgamma}}{N_*^{2\alpha}(N-1)} + F(\bsh)\right)^{1/2},
\]
where $F(\bsh)$ is defined in \eqref{eq:def_F_bsh'}. Then 
\[
\Pr\left( \operatorname*{median}_{r\in\{1{:}R\}}
   \left| \widehat{f}_{N,\bsz_r,\bsDelta_r}(\bsh)\right|^2 
   > \delta'(\bsh)^2 \right)\leq \frac{1}{2}2^{-R/2}.
\]
\end{lemma}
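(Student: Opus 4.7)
The plan is to reduce the claim to a single-realization tail estimate and then invoke the median trick of Lemma~\ref{lem:mediantrick}. Specifically, I would first establish that for each fixed $r\in\{1{:}R\}$,
\[
\Pr\!\left(\bigl|\widehat{f}_{N,\bsz_r,\bsDelta_r}(\bsh)\bigr|^2 > \delta'(\bsh)^2\right) \le \frac{1}{8}.
\]
Since the pairs $(\bsz_r,\bsDelta_r)$ are i.i.d.\ and $|\widehat{f}_{N,\bsz_r,\bsDelta_r}(\bsh)|^2$ is already real-valued, Lemma~\ref{lem:mediantrick} applied with the half-line $I=(-\infty,\delta'(\bsh)^2]$ and $p=1/8$ then yields
\[
\Pr\!\left(\operatorname*{median}_{r\in\{1{:}R\}}\bigl|\widehat{f}_{N,\bsz_r,\bsDelta_r}(\bsh)\bigr|^2 > \delta'(\bsh)^2\right) \le \frac{1}{2}\Bigl(4\cdot\tfrac{1}{8}\cdot\tfrac{7}{8}\Bigr)^{R/2} = \frac{1}{2}\,(7/16)^{R/2} \le \frac{1}{2}\cdot 2^{-R/2},
\]
which is the desired conclusion. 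In contrast to Lemma~\ref{lem:medianbound}, there is no need to split into real and imaginary parts, since the quantity inside the median is already a real number.

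For the single-realization bound, I would closely follow the template of Lemma~\ref{lem:concentration}. Writing
\[
\Pr\!\left(|\widehat{f}_{N,\bsz_r,\bsDelta_r}(\bsh)|^2 > \delta'(\bsh)^2\right) \le \Pr(P_{N,\bsz_r}^\perp\cap \mathcal{B}_{\bsh}\neq\emptyset) + \Pr\!\left(\{|\widehat{f}_{N,\bsz_r,\bsDelta_r}(\bsh)|^2 > \delta'(\bsh)^2\}\cap \{P_{N,\bsz_r}^\perp\cap \mathcal{B}_{\bsh}=\emptyset\}\right),
\]
Lemma~\ref{lem:nointersect} bounds the first summand by $1/16$. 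For the second, Markov's inequality reduces the task to controlling $\EE[|\widehat{f}_{N,\bsz_r,\bsDelta_r}(\bsh)|^2 \bsone\{P_{N,\bsz_r}^\perp\cap \mathcal{B}_{\bsh}=\emptyset\}]$. Integrating first over $\bsDelta_r$ via \eqref{eqn:1stmoment} turns this into $\sum_{\bsl\in P_{N,\bsz_r}^\perp}|\widehat{f}(\bsh+\bsl)|^2$, which I would split into the pieces $\bsl\in N\cdot\ZZ^d$ (always contained in $P_{N,\bsz_r}^\perp$, contributing the deterministic term $F(\bsh)$) and $\bsl\in P_{N,\bsz_r}^\perp\setminus(N\cdot\ZZ^d)$ (where the event $P_{N,\bsz_r}^\perp\cap \mathcal{B}_{\bsh}=\emptyset$ forces $r_{2\alpha,\bsgamma}(\bsh+\bsl)\ge N_*^{2\alpha}$). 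Averaging over $\bsz_r$ in the second piece, exchanging the order of summation, and applying \eqref{eqn:ellinP} exactly as in Lemma~\ref{lem:concentration} yields the bound $\|f\|^2_{d,\alpha,\bsgamma}/(N_*^{2\alpha}(N-1))$. Summing the two contributions gives $\delta'(\bsh)^2/16$, and Markov's inequality then supplies the remaining $1/16$, for a total of $1/8$.

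I do not anticipate any genuine technical obstacle: the computation is essentially Lemma~\ref{lem:concentration} with $\widehat{f}(\bsh)$ replaced by $0$, so the $\bsl=\bszero$ term in the dual-lattice expansion of $|\widehat{f}_{N,\bsz_r,\bsDelta_r}(\bsh)|^2$ contributes $|\widehat{f}(\bsh)|^2$ rather than cancelling. This is exactly why $\delta'(\bsh)$ features the full sum $F(\bsh)$ over $N\cdot\ZZ^d$ including $\bsl=\bszero$, whereas $\delta(\bsh)$ in Lemma~\ref{lem:concentration} features only the restricted sum over $N\cdot\ZZ^d\setminus\{\bszero\}$. Keeping this bookkeeping straight is the only subtle point.
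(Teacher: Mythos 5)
Your proposal is correct and follows essentially the same route as the paper: the paper's proof likewise bounds the single-realization expectation by replacing \eqref{eqn:deltamean} with \eqref{eqn:1stmoment} in the argument of Lemma~\ref{lem:concentration} (so that the sum over $N\cdot\ZZ^d$, including $\bsl=\bszero$, yields $F(\bsh)$), obtains the $1/16+1/16=1/8$ tail bound via Lemma~\ref{lem:nointersect} and Markov's inequality, and then applies the median trick of Lemma~\ref{lem:mediantrick} to the $R$ independent replicates. Your bookkeeping of why $\delta'(\bsh)$ contains the full sum $F(\bsh)$, and the observation that no real/imaginary split is needed, match the intended argument exactly.
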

\begin{proof}
By replacing \eqref{eqn:deltamean} with \eqref{eqn:1stmoment}, the same argument as described in the proof of Lemma~\ref{lem:concentration} yields
\begin{align*}
    \e\left[|\widehat{f}_{N,\bsz_r,\bsDelta_r}(\bsh)|^2\bsone\{P_{N,\boldsymbol{z}_r}^\perp\cap \mathcal{B}_{\bsh}= \emptyset\}\right]
    \leq  \frac{\|f\|^2_{d,\alpha,\bsgamma}}{N_*^{2\alpha}(N-1) }+\sum_{\bsl \in N\cdot \ZZ^d }|\widehat{f}(\bsh+\bsl)|^2=\frac{\delta'(\bsh)^2}{16}.
\end{align*}
Hence, using Lemma~\ref{lem:nointersect} together with Markov’s inequality (as in \eqref{eqn:Markov}), we obtain
\begin{align*}
   & \Pr\left( | \widehat{f}_{N,\bsz_r,\bsDelta_r}(\bsh)|^2 > \delta'(\bsh)^2 \right) \\
   & \leq \Pr(P_{N,\boldsymbol{z}_r}^\perp\cap \mathcal{B}_{\bsh}\neq \emptyset)+\frac{1}{\delta'(\bsh)^2}\e\left[|\widehat{f}_{N,\bsz_r,\bsDelta_r}(\bsh)|^2\bsone\{P_{N,\boldsymbol{z}_r}^\perp\cap \mathcal{B}_{\bsh}= \emptyset\}\right] \le \frac{1}{8}.
\end{align*}
Finally, applying Lemma~\ref{lem:mediantrick} to the $R$ independent replicates gives the claimed probability bound.
\end{proof}

The next lemma provides an upper bound on $F(\bsh'_N)$, the smallest value of $F(\bsh')$ among $\bsh'\in K'_N\subset \mathcal{A}_d(N/2)$, in terms of the norm $\|f\|_{d,\alpha,\bsgamma}$.

\begin{lemma}\label{lem:FNbound}
    For any $f\in \Hcal_{d,\alpha,\bsgamma}$,
    \[
      F(\bsh'_N)\leq \frac{16}{15}\,\frac{\|f\|^2_{d,\alpha,\bsgamma}}{N_*^{2\alpha}(N-1)}.
    \]
\end{lemma}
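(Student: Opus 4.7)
The plan is to prove the bound by combining an averaging argument over $K'_N$ with the tail estimate from Lemma~\ref{lem:sumnotinL}. Since $F(\bsh'_1)\ge\cdots\ge F(\bsh'_N)$ by construction, $F(\bsh'_N)$ is the minimum of $F$ on $K'_N$, so for any nonempty subset $T\subseteq K'_N$ one has $F(\bsh'_N)\le |T|^{-1}\sum_{\bsh'\in T}F(\bsh')$. I would take $T := K'_N\setminus\Acal_{d,\alpha,\bsgamma}(N_*)$; since $|K'_N|=N$ and $|\Acal_{d,\alpha,\bsgamma}(N_*)|\le (N-1)/16$ by \eqref{eqn:Nstar}, a union bound gives $|T|\ge N-(N-1)/16=(15N+1)/16$.

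The core step is to show that $\sum_{\bsh'\in T}F(\bsh')\le \|f\|_{d,\alpha,\bsgamma}^2/N_*^{2\alpha}$. Writing out $F(\bsh')$ using \eqref{eq:def_F_bsh'} and using that $\Acal_d(N/2)\subseteq(-N/2,N/2)^d$ so that the cosets $\bsh'+N\cdot\ZZ^d$, $\bsh'\in K'_N$, are mutually disjoint, this sum equals $\sum_{\bsh\in S}|\widehat{f}(\bsh)|^2$ for some $S\subseteq \ZZ^d$. A short case analysis then shows $S\cap \Acal_{d,\alpha,\bsgamma}(N_*)=\emptyset$: the contribution $\bsl=\bszero$ is excluded by the very definition of $T$, while for $\bsl\in N\cdot\ZZ^d\setminus\{\bszero\}$ some coordinate satisfies $|\ell_j|\ge N$, which forces $|h'_j+\ell_j|>N/2>N_*$, and since the weights are bounded by $1$ this gives $r_{2\alpha,\bsgamma}(\bsh'+\bsl)\ge |h'_j+\ell_j|^{2\alpha}>N_*^{2\alpha}$. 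Lemma~\ref{lem:sumnotinL} (essentially reused as in Corollary~\ref{cor:sumnotinN}) then delivers the desired upper bound.

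Putting the two estimates together yields $F(\bsh'_N)\le \frac{16}{15N+1}\cdot\frac{\|f\|_{d,\alpha,\bsgamma}^2}{N_*^{2\alpha}}$, and the stated bound $\frac{16}{15}\cdot\frac{\|f\|_{d,\alpha,\bsgamma}^2}{N_*^{2\alpha}(N-1)}$ follows from the trivial inequality $15(N-1)\le 15N+1$. No step poses a serious obstacle; the only place where one has to be a little careful is the verification that every element of $S$ lies outside $\Acal_{d,\alpha,\bsgamma}(N_*)$, which uses both the assumption $\gamma_\fraku\le 1$ and the strict inequality $N_*<N/2$ established just after \eqref{eqn:Nstar}.
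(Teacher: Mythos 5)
Your proof is correct, and it reaches the stated constant by a route that differs in mechanism from the paper's, though it relies on the same underlying ingredients. The paper starts from the identity $r_{2\alpha,\bsgamma}(\bsh)=\min_{\bsl\in N\cdot\ZZ^d}r_{2\alpha,\bsgamma}(\bsh+\bsl)$ for $\bsh\in(-N/2,N/2)^d$, which immediately gives $\|f\|^2_{d,\alpha,\bsgamma}\ge\sum_{\bsh\in\Acal_d(N/2)}F(\bsh)\,r_{2\alpha,\bsgamma}(\bsh)\ge F(\bsh'_N)\sum_{\bsh'\in K'_N}r_{2\alpha,\bsgamma}(\bsh')$, and then lower-bounds the weight sum by $\tfrac{15}{16}N_*^{2\alpha}(N-1)$ using \eqref{eqn:Nstar}. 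You instead discard the at most $(N-1)/16$ light indices, average $F$ over the heavy subset $T=K'_N\setminus\Acal_{d,\alpha,\bsgamma}(N_*)$, and control $\sum_{\bsh'\in T}F(\bsh')$ by noting that the cosets $\bsh'+N\cdot\ZZ^d$, $\bsh'\in T$, are pairwise disjoint and lie entirely outside $\Acal_{d,\alpha,\bsgamma}(N_*)$ (your case analysis for $\bsl\neq\bszero$, using $\gamma_\fraku\le 1$ and $N_*<N/2$, is sound), so that Lemma~\ref{lem:sumnotinL} applies. Both arguments hinge on the same counting bound \eqref{eqn:Nstar} and on the fact that coset representatives lie in $(-N/2,N/2)^d$; the paper's version is slightly shorter because the min-identity spares the coset case analysis, while yours reuses the already established tail lemma and even yields the marginally sharper intermediate constant $16/(15N+1)$ before the final, harmless relaxation to $16/(15(N-1))$.
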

\begin{proof}
First observe that for any $\bsh\in (-N/2,N/2)^d$,
\[
   r_{2\alpha, \bsgamma}(\bsh)=\min_{\bsl\in N\cdot \ZZ^d} r_{2\alpha, \bsgamma}(\bsh+\bsl).
\]
Hence,
\begin{align}\label{eqn:FNbound}
    \|f\|^2_{d,\alpha,\bsgamma}=\sum_{\bsh\in\ZZ^d} |\widehat{f}({\bsh})|^2 r_{2\alpha,\bsgamma}(\bsh)\geq \sum_{\bsh\in \mathcal{A}_d(N/2)} F(\bsh)  r_{2\alpha,\bsgamma}(\bsh)\geq F(\bsh'_N)\sum_{\bsh'\in K'_N}r_{2\alpha,\bsgamma}(\bsh').
\end{align}
Since $|K'_N|=N$, while at most $(N-1)/16$ indices $\bsh'\in \mathcal{A}_d(N/2)$ satisfy
$r_{2\alpha,\bsgamma}(\bsh') < N_*^{2\alpha}$ (by~\eqref{eqn:Nstar}), we obtain
\[
   \sum_{\bsh'\in K'_N}r_{2\alpha,\bsgamma}(\bsh')
   \ge N_*^{2\alpha}\left(N - \frac{N-1}{16}\right)
   \ge \frac{15}{16} N_*^{2\alpha}(N-1).
\]
Inserting this estimate into~\eqref{eqn:FNbound} yields the desired inequality.
\end{proof}

\begin{corollary}\label{cor:nonKN}
For $f\in \Hcal_{d,\alpha,\bsgamma}$, define
\[
   \Bar{\delta}=4\left(\frac{31}{15}\,\frac{\|f\|^2_{d,\alpha,\bsgamma}}{N_*^{2\alpha}(N-1)}\right)^{1/2}.
\]
Then for all $\bsh\in \mathcal{A}_d(N/2)\setminus K'_N$,
\[
   \Pr\left( \operatorname*{median}_{r\in\{1{:}R\}}
     \left| \widehat{f}_{N,\bsz_r,\bsDelta_r}(\bsh)\right|^2
     > \Bar{\delta}^2 \right) \le \frac{1}{2}\,2^{-R/2}.
\]
\end{corollary}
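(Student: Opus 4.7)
The plan is to deduce the corollary directly from Lemma~\ref{lem:mediannorm} by producing a uniform (in $\bsh$) upper bound on the threshold $\delta'(\bsh)$ appearing there. Lemma~\ref{lem:mediannorm} already delivers the tail probability $\tfrac{1}{2}\,2^{-R/2}$ we want, but against the frequency-dependent threshold $\delta'(\bsh)^2 = 16\bigl(\|f\|^2_{d,\alpha,\bsgamma}/(N_*^{2\alpha}(N-1)) + F(\bsh)\bigr)$. The task therefore reduces to showing that $\delta'(\bsh)^2 \le \Bar{\delta}^2$ for every $\bsh \in \Acal_d(N/2)\setminus K'_N$, after which monotonicity of probability in the threshold closes the argument.

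The key step exploits the defining property of $K'_N$: it consists of the $N$ frequencies in $\Acal_d(N/2)$ with the largest values of $F(\cdot)$, and $\bsh'_N$ is the smallest among these. Consequently, for any $\bsh \in \Acal_d(N/2)\setminus K'_N$ we have $F(\bsh) \le F(\bsh'_N)$, and Lemma~\ref{lem:FNbound} then gives the uniform bound $F(\bsh) \le (16/15)\,\|f\|^2_{d,\alpha,\bsgamma}/(N_*^{2\alpha}(N-1))$. Inserting this into $\delta'(\bsh)^2$, the two contributions to the parenthesized expression combine as $1 + 16/15 = 31/15$, so that
\[
\delta'(\bsh)^2 \;\le\; 16\cdot\frac{31}{15}\cdot\frac{\|f\|^2_{d,\alpha,\bsgamma}}{N_*^{2\alpha}(N-1)} \;=\; \Bar{\delta}^2,
\]
which is precisely the threshold constant built into $\Bar{\delta}$.

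Once this pointwise inequality is in place, the event $\{\operatorname*{median}_{r\in\{1{:}R\}}|\widehat f_{N,\bsz_r,\bsDelta_r}(\bsh)|^2 > \Bar{\delta}^2\}$ is contained in the corresponding event with $\Bar{\delta}^2$ replaced by $\delta'(\bsh)^2$, and Lemma~\ref{lem:mediannorm} bounds the latter's probability by $\tfrac{1}{2}\,2^{-R/2}$. There is no substantive obstacle here: the corollary is a packaging step combining Lemmas~\ref{lem:mediannorm} and~\ref{lem:FNbound}, and the only care required is the small arithmetic that consolidates the two constants $1$ and $16/15$ into the single constant $31/15$ appearing in $\Bar{\delta}$.
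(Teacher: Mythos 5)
Your proposal is correct and matches the paper's own proof: both combine Lemma~\ref{lem:FNbound} (via the defining property of $K'_N$, giving $F(\bsh)\le F(\bsh'_N)\le \tfrac{16}{15}\,\|f\|^2_{d,\alpha,\bsgamma}/(N_*^{2\alpha}(N-1))$ for $\bsh\notin K'_N$) with Lemma~\ref{lem:mediannorm}, noting that $\delta'(\bsh)^2\le \Bar{\delta}^2$ so the tail bound transfers by monotonicity. The arithmetic $1+\tfrac{16}{15}=\tfrac{31}{15}$ is exactly the consolidation behind the constant in $\Bar{\delta}$.
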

\begin{proof}
If $\bsh\in \mathcal{A}_d(N/2)\setminus K'_N$, then by Lemma~\ref{lem:FNbound} and the definition of $K'_N$
\[
   F(\bsh) \leq F(\bsh'_N)
   \leq \frac{16}{15}\,\frac{\|f\|^2_{d,\alpha,\bsgamma}}{N_*^{2\alpha}(N-1)}.
\]
The claimed probability estimate then follows immediately from Lemma~\ref{lem:mediannorm}.
\end{proof}

\begin{lemma}\label{thm:inKN}
For  $\bsh\in \mathcal{A}_d(N/2)$ and $f\in \Hcal_{d,\alpha,\bsgamma}$, if 
    \begin{equation}\label{eqn:largefhat}
       |\widehat{f}(\bsh)|^2\ge \Bar{S}(\bsh)^2\max\left(960 N^{-2\alpha}_*,\sqrt{240 \Bar{r}_{\mathrm{sum}} }\right),
    \end{equation}
    where $\Bar{S}(\bsh)$ and $\Bar{r}_{\mathrm{sum}}$ are as defined in \eqref{eq:def_Sh} and \eqref{eq:def_r_sum}, respectively, then 
    \[
    \Pr\left(\operatorname*{median}_{r\in\{1{:}R\}} | \widehat{f}_{N,\bsz_r,\bsDelta_r}(\bsh)|^2\leq \frac{F(\bsh)}{2}\right)\le \frac{1}{2}\left(\frac{3}{4}\right)^{R/2}.
    \]
\end{lemma}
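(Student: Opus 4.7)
The plan is to reduce the claim to a single-trial estimate and then invoke Lemma~\ref{lem:mediantrick}. Matching the target probability $(1/2)(3/4)^{R/2}$ against $(1/2)(4p(1-p))^{R/2}$ suggests taking $p=1/4$, so it suffices to show that for each $r\in\{1{:}R\}$,
\[
\Pr\bigl(|\widehat{f}_{N,\bsz_r,\bsDelta_r}(\bsh)|^2 \le F(\bsh)/2\bigr) \le \tfrac{1}{4}.
\]
I would split the randomness by conditioning on $\bsz_r$ and then applying Chebyshev's inequality over $\bsDelta_r$. The key quantity is the conditional mean
\[
\mu(\bsh) := \sum_{\bsl\in P^\perp_{N,\bsz_r}} |\widehat{f}(\bsh+\bsl)|^2
\]
from \eqref{eqn:1stmoment}; since $N\cdot \ZZ^d \subseteq P^\perp_{N,\bsz_r}$, one has $\mu(\bsh) \ge F(\bsh) \ge |\widehat{f}(\bsh)|^2$.

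Next, I would call $\bsz_r$ \emph{good} if simultaneously $S(\bsh)^2 \le \Bar{S}(\bsh)^2$, $r_{\min}(\bsh) \ge N_*^{2\alpha}$, and $r_{\mathrm{sum}}(\bsh) \le \Bar{r}_{\mathrm{sum}}$; a union bound over the two assertions of Lemma~\ref{lem:goodlattice} gives $\Pr(\bsz_r \text{ bad}) \le 1/16 + 1/8 = 3/16$. On the good event, $\mu(\bsh) - F(\bsh)/2 \ge \mu(\bsh)/2$, so Chebyshev's inequality combined with the variance bound of Lemma~\ref{lem:2ndmomentbound} yields
\[
\Pr_{\bsDelta_r}\bigl(|\widehat{f}_{N,\bsz_r,\bsDelta_r}(\bsh)|^2 \le F(\bsh)/2\bigr) \le \frac{4\sigma^2(\bsh)}{\mu(\bsh)^2} \le \frac{32\,\Bar{S}(\bsh)^2}{|\widehat{f}(\bsh)|^2 N_*^{2\alpha}} + \frac{8\,\Bar{r}_{\mathrm{sum}}\,\Bar{S}(\bsh)^4}{|\widehat{f}(\bsh)|^4},
\]
where in the last step I also used $\mu(\bsh) \ge |\widehat{f}(\bsh)|^2$. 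The hypothesis \eqref{eqn:largefhat} is calibrated precisely so that each of the two summands is at most $1/30$, yielding a conditional failure probability of at most $1/15$.

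Combining via a conditional union bound then gives
\[
\Pr(\text{single trial fails}) \le \tfrac{3}{16} + \tfrac{13}{16}\cdot \tfrac{1}{15} = \tfrac{58}{240} < \tfrac{1}{4},
\]
after which Lemma~\ref{lem:mediantrick}, applied with $p=1/4$ and the interval $I=(F(\bsh)/2,\infty)$ to the real-valued random variables $|\widehat{f}_{N,\bsz_r,\bsDelta_r}(\bsh)|^2$, delivers the claim. The hard part will be verifying that the constants $960$ and $240$ in \eqref{eqn:largefhat} are tight enough for this argument: the factor $4$ from applying Chebyshev at level $\mu/2$ forces each contribution to land at exactly $1/30$, and only then does the sum with the $3/16$ lattice-failure budget squeeze under the $1/4$ ceiling required for the $(3/4)^{R/2}$ rate. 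Any looser choice of thresholds, or a cruder conditional bound, would break this delicate balance.
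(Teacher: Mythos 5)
Your proof is correct and follows essentially the same route as the paper: condition on the ``good lattice'' event from Lemma~\ref{lem:goodlattice}, bound the conditional variance over $\bsDelta_r$ via Lemma~\ref{lem:2ndmomentbound}, use \eqref{eqn:largefhat} to make the variance-to-mean ratio small, and finish with Lemma~\ref{lem:mediantrick} at $p=1/4$. The only deviation is that you use two-sided Chebyshev (conditional failure probability $1/15$) where the paper uses Cantelli's one-sided inequality (which gives $1/16$, so that $3/16+1/16=1/4$ exactly); your accounting $\tfrac{3}{16}+\tfrac{13}{16}\cdot\tfrac{1}{15}=\tfrac{58}{240}<\tfrac{1}{4}$ is nevertheless valid, since $b+(1-b)\cdot\tfrac{1}{15}$ is increasing in $b:=\Pr(\text{bad lattice})\le \tfrac{3}{16}$, whereas the cruder union bound $\tfrac{3}{16}+\tfrac{1}{15}$ would have just exceeded $\tfrac14$.
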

\begin{proof}
From \eqref{eqn:1stmoment}, we have
\[
\int_{[0,1)^d}|\widehat{f}_{N,\bsz_r,\bsDelta_r}(\bsh)|^2\rd\bsDelta_r=\sum_{\bsl \in P_{N,\boldsymbol{z}_r}^\perp}|\widehat{f}(\bsh+\bsl)|^2\geq F(\bsh)\geq |\widehat{f}(\bsh)|^2,
\]
which provides a lower bound on the expectation of $|\widehat{f}_{N,\bsz_r,\bsDelta_r}(\bsh)|^2$ when $\bsz_r$ is fixed and $\bsDelta_r$ follows a uniform distribution over $[0,1)^d$. 
Let $\sigma^2(\bsh)$ denote the corresponding variance as in Lemma~\ref{lem:2ndmomentbound}. 
Then, Cantelli's inequality gives
\begin{align}\label{eqn:Cantelli}
     & \int_{[0,1)^d}\bsone\left\{|\widehat{f}_{N,\bsz_r,\bsDelta_r}(\bsh)|^2\leq \frac{F(\bsh)}{2}\right\}\rd\bsDelta_r \notag\\
     & \le \int_{[0,1)^d}\bsone\left\{|\widehat{f}_{N,\bsz_r,\bsDelta_r}(\bsh)|^2\leq\int_{[0,1)^d} |\widehat{f}_{N,\bsz_r,\bsDelta_r}(\bsh)|^2\rd\bsDelta_r - \frac{F(\bsh)}{2}\right\}\rd\bsDelta_r \notag\\
    & \le \frac{\sigma(\bsh)^2}{(F(\bsh)/2)^2+\sigma(\bsh)^2} \le \frac{1}{1+|\widehat{f}(\bsh)|^4/(4\sigma(\bsh)^2)}.    
\end{align}

    Under the ``good lattice'' event, i.e., when $r_{\min}(\bsh)> N^{2\alpha}_*$, $r_{\mathrm{sum}}(\bsh)< \Bar{r}_{\mathrm{sum}}$, and $S(\bsh)^2< \Bar{S}(\bsh)^2$, Lemma~\ref{lem:2ndmomentbound} gives
    \[ 
    \frac{\sigma^2(\bsh)}{|\widehat{f}(\bsh)|^4}\leq 8N^{-2\alpha}_*\frac{\Bar{S}(\bsh)^2}{|\widehat{f}(\bsh)|^2}  +2\Bar{r}_{\mathrm{sum}}  \frac{\Bar{S}^4(\bsh)}{|\widehat{f}(\bsh)|^4}.
    \]
    By condition~\eqref{eqn:largefhat}, the right-hand side is at most $1/60$, so the right-most side of \eqref{eqn:Cantelli} is bounded by $1/16$. Hence, by Lemma~\ref{lem:goodlattice},
    \begin{align*}
     & \Pr\left(\operatorname*{median}_{r\in\{1{:}R\}}| \widehat{f}_{N,\bsz_r,\bsDelta_r}(\bsh)|^2\leq \frac{F(\bsh)}{2}\right) \\
     & \le \Pr\left(\{r_{\min}(\bsh) \leq N^{2\alpha}_*\}\cup \{r_{\mathrm{sum}}(\bsh)\ge \Bar{r}_{\mathrm{sum}}\}\cup \{S(\bsh)^2\ge\Bar{S}(\bsh)^2\}\right) \\
     & \quad + \Pr(\{|\widehat{f}_{N,\bsz_r,\bsDelta_r}(\bsh)|^2\leq F(\bsh)/2\}\cap \{r_{\min}(\bsh)> N^{2\alpha}_*\}\cap \{r_{\mathrm{sum}}(\bsh)< \Bar{r}_{\mathrm{sum}}\}\cap \{S(\bsh)^2<\Bar{S}(\bsh)^2\})\\
     & \le \frac{1}{4}.
    \end{align*}
    The conclusion then follows from Lemma~\ref{lem:mediantrick}.
\end{proof}

Finally, we are now at the stage of proving a probabilistic upper bound on the third summand in the error decomposition \eqref{eqn:errordecom}.
The following theorem formalizes this bound.

\begin{theorem}\label{thm:K'minusK}
    Let odd prime $N$ and odd natural number $R$ be sufficiently large such that
    \begin{equation}\label{eq:choose_R_second}
    \varepsilon_2:=\frac{|\mathcal{A}_d(N/2)|}{2}\left(\frac{1}{2}\right)^{R/2}+\frac{N}{2}\left(\frac{3}{4}\right)^{R/2 }< 1.
    \end{equation}
    Then, for any $f\in \Hcal_{d,\alpha,\bsgamma}$, with probability at least $1-\varepsilon_2$,
    \[ 
    \sum_{\bsh'\in K'_N\setminus K_N}|\widehat{f}(\bsh')|^2\leq \frac{16\|f\|^2_{d,\alpha,\bsgamma}}{N_*^{2\alpha}}  \left(\frac{62N}{15(N-1)}+\frac{2N-1}{N-1 }\max\left(960,\sqrt{240 N^{4\alpha}_*\Bar{r}_{\mathrm{sum}}}\right) \right).
    \]
\end{theorem}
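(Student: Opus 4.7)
The plan is to combine Lemma~\ref{thm:inKN} (which says the empirical median is not much smaller than $F(\bsh')/2$ when $|\widehat{f}(\bsh')|^2$ is "large") with Corollary~\ref{cor:nonKN} (which says the empirical median is uniformly small on $\mathcal{A}_d(N/2)\setminus K'_N$), and then to do a combinatorial comparison between $K_N$ and $K'_N$. Concretely, I would introduce two good events:
\begin{align*}
E_1 &= \Big\{ \operatorname*{median}_{r\in\{1{:}R\}} |\widehat{f}_{N,\bsz_r,\bsDelta_r}(\bsh')|^2 > \tfrac{F(\bsh')}{2} \text{ for every } \bsh'\in K'_N \text{ satisfying \eqref{eqn:largefhat}} \Big\}, \\
E_2 &= \Big\{ \operatorname*{median}_{r\in\{1{:}R\}} |\widehat{f}_{N,\bsz_r,\bsDelta_r}(\bsh)|^2 \le \bar{\delta}^2 \text{ for every } \bsh\in \mathcal{A}_d(N/2)\setminus K'_N \Big\}.
\end{align*}
Union bounds using Lemma~\ref{thm:inKN} over $|K'_N|=N$ indices and Corollary~\ref{cor:nonKN} over at most $|\mathcal{A}_d(N/2)|$ indices give $\Pr(E_1\cap E_2)\ge 1-\varepsilon_2$.

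On $E_1\cap E_2$, fix $\bsh'\in K'_N$ satisfying both (i) the size condition \eqref{eqn:largefhat} and (ii) $F(\bsh')>2\bar{\delta}^2$. Then the empirical median at $\bsh'$ exceeds $F(\bsh')/2>\bar{\delta}^2$, which strictly exceeds the empirical medians at every $\bsh\in \mathcal{A}_d(N/2)\setminus K'_N$. Hence such a $\bsh'$ must belong to $K_N$: otherwise the $N$ indices of $K_N$ would all lie in $K'_N\setminus\{\bsh'\}$, contradicting $|K'_N|=N$. Consequently, every $\bsh'\in K'_N\setminus K_N$ must fail (i) or fail (ii).

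The bookkeeping is then straightforward: if (i) fails, bound $|\widehat{f}(\bsh')|^2 < \Bar{S}(\bsh')^2 \max(960 N_*^{-2\alpha},\sqrt{240\Bar{r}_{\mathrm{sum}}})$; if (ii) fails, bound $|\widehat{f}(\bsh')|^2\le F(\bsh')\le 2\bar{\delta}^2$. Summing over $\bsh'\in K'_N$ yields an upper bound
\[
\sum_{\bsh'\in K'_N\setminus K_N}|\widehat{f}(\bsh')|^2 \le \max(960 N_*^{-2\alpha},\sqrt{240\Bar{r}_{\mathrm{sum}}})\sum_{\bsh'\in K'_N}\Bar{S}(\bsh')^2 + 2N\bar{\delta}^2.
\]
For the first term I would use the definition \eqref{eq:def_Sh} and the disjointness of the shifted lattices $\bsh'+N\cdot \ZZ^d$ for distinct $\bsh'\in \mathcal{A}_d(N/2)$ (as in the proof of Corollary~\ref{cor:sumnotinN}) to obtain
\[
\sum_{\bsh'\in K'_N}\Bar{S}(\bsh')^2 \le 16\|f\|^2_{d,\alpha,\bsgamma}\Big(\tfrac{N}{N-1}+1\Big) = 16\|f\|^2_{d,\alpha,\bsgamma}\,\tfrac{2N-1}{N-1};
\]
for the second, I would insert $\bar{\delta}^2=\tfrac{16\cdot 31}{15}\tfrac{\|f\|^2_{d,\alpha,\bsgamma}}{N_*^{2\alpha}(N-1)}$ from Corollary~\ref{cor:nonKN} to produce the $\tfrac{62N}{15(N-1)}$ term. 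Pulling out the common factor $16\|f\|^2_{d,\alpha,\bsgamma}/N_*^{2\alpha}$ and absorbing $N_*^{-2\alpha}$ into the maximum (rewriting it as $N_*^{-2\alpha}\max(960,\sqrt{240N_*^{4\alpha}\Bar{r}_{\mathrm{sum}}})$) yields exactly the claimed bound.

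The main obstacle I expect is the combinatorial step: one must verify carefully that the strict inequality between medians transfers through the algorithm's sorting rule to force membership in $K_N$, and one must separate cleanly the two failure modes (i) and (ii) so as not to double-count contributions. The estimate for $\sum_{\bsh'\in K'_N}\Bar{S}(\bsh')^2$ in terms of $\|f\|^2_{d,\alpha,\bsgamma}$ also requires the downward-closedness of $\bsgamma$ via the disjoint-shift argument; everything else is bookkeeping.
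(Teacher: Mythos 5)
Your proposal is correct and follows essentially the same route as the paper's proof: the same two high-probability events with the same union bound yielding $\varepsilon_2$, the same two failure modes for $\bsh'\in K'_N\setminus K_N$ (condition \eqref{eqn:largefhat} fails, or $F(\bsh')\le 2\Bar{\delta}^2$), and the same final bookkeeping; the paper handles the membership step by comparing the median at $\bsh'$ directly with that of some $\bsh\in K_N\setminus K'_N$, while you argue by a cardinality contradiction, which is equivalent. One small remark: the estimate for $\sum_{\bsh'\in K'_N}\Bar{S}(\bsh')^2$ needs only the disjointness of the shifted lattices $\bsh'+N\cdot\ZZ^d$ together with the definition of the norm, not the downward-closedness of the weights.
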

\begin{proof}
Applying Corollary~\ref{cor:nonKN} to all $\bsh\in \mathcal{A}_d(N/2)\setminus K'_N$ and Lemma~\ref{thm:inKN} to all $\bsh'\in K'_N$ satisfying \eqref{eqn:largefhat}, a union bound argument shows that, with probability at least $1-\varepsilon_2$, we have:
    \begin{itemize}
        \item For all $\bsh\notin K'_N$,
        \[
            \operatorname*{median}_{r\in\{1{:}R\}}| \widehat{f}_{N,\bsz_r,\bsDelta_r}(\bsh)|^2\leq \Bar{\delta}^2.
        \]
        \item For all $\bsh'\in K'_N$ satisfying \eqref{eqn:largefhat},
        \[ 
            \operatorname*{median}_{r\in\{1{:}R\}}| \widehat{f}_{N,\bsz_r,\bsDelta_r}(\bsh')|^2> \frac{F(\bsh')}{2}.
        \]
    \end{itemize}

    Now, for any $\bsh'\in K'_N\setminus K_N$, either
    \[
    |\widehat{f}(\bsh')|^2< \Bar{S}(\bsh)^2\max\left(960 N^{-2\alpha}_*,\sqrt{240 \Bar{r}_{\mathrm{sum}} }\right)
    \]
    or, if this is not the case, we argue as follows: for any $\bsh\in K_N\setminus K'_N$, by the definition of $K_N$,
    \[
    \frac{F(\bsh')}{2} <
    \operatorname*{median}_{r\in\{1{:}R\}}| \widehat{f}_{N,\bsz_r,\bsDelta_r}(\bsh')|^2 \le \operatorname*{median}_{r\in\{1{:}R\}}| \widehat{f}_{N,\bsz_r,\bsDelta_r}(\bsh)|^2\leq \Bar{\delta}^2,
    \]
    which, together with the definition of $F(\bsh')$, implies
    \[
    |\widehat{f}(\bsh')|^2\leq F(\bsh')\leq 2\Bar{\delta}^2.
    \]
    Combining the upper bounds in the two cases, we obtain
    \[
        \sum_{\bsh'\in K'_N\setminus K_N}|\widehat{f}(\bsh')|^2
        \leq |K'_N\setminus K_N|\cdot 2\Bar{\delta}^2
        + \max\left(960 N^{-2\alpha}_*,\sqrt{240 \Bar{r}_{\mathrm{sum}} }\right)\sum_{\bsh'\in K'_N\setminus K_N} \Bar{S}^2(\bsh').
    \]
    Finally, using $|K'_N\setminus K_N|\le N$ and 
    \begin{align*}
    \sum_{\bsh'\in K'_N\setminus K_N} \Bar{S}^2(\bsh')
    & \leq 16\left(|K'_N\setminus K_N|\frac{\|f\|^2_{d,\alpha,\bsgamma}}{N-1 }+\sum_{\bsh'\in K'_N\setminus K_N}\sum_{\bsl \in N\cdot \ZZ^d\setminus \{\bszero\} }|\widehat{f}(\bsh'+\bsl)|^2r_{2\alpha, \bsgamma}(\bsh'+\bsl)\right)\\
    & \leq 16 \left( \frac{N\|f\|^2_{d,\alpha,\bsgamma}}{N-1 }+\|f\|^2_{d,\alpha,\bsgamma}\right),
    \end{align*}
    the claim follows.
\end{proof}

\section{Numerical experiments}\label{sec:experiment}
Here, we conduct numerical experiments to support our theoretical findings. The key feature of Algorithm~\ref{alg:median} is its universality; it does not require any prior information on the smoothness $\alpha$ and the weights $\bsgamma$ as an input, yet it automatically exploits the regularity of the target function. The focus of these experiments is to test this capability to exploit smoothness. To do this, we consider the simplest multivariate setting, i.e., the unweighted, bivariate case.

Following a previous paper by the same authors on median lattice-based algorithms \cite{PKG24}, we test the following two periodic functions, which were also used in \cite{CGK24}:
\begin{align*}
f_1(\bsx) &=  \prod_{j=1}^d \frac{121\sqrt{33}}{100}\max \left\{\frac{25}{121}-\left(x_j-\frac{1}{2} \right)^2, 0 \right \}, \\
f_2(\bsx) &= \prod_{j=1}^{d} \left(x_j-\frac{1}{2} \right)^2\sin(2\pi x_j-\pi),
\end{align*}
with $d=2$. It is important to note that these functions belong to different smoothness classes: $f_1\in \Hcal_{d,3/2-\epsilon,\bsgamma}$ and $f_2\in \Hcal_{d,5/2-\epsilon,\bsgamma}$ for any arbitrarily small $\epsilon>0$.
This means that the smoothness of $f_1$ is arbitrarily close to $3/2$ and that of $f_2$ is arbitrarily close to $5/2$.
Consequently, based on Theorem~\ref{thm:combined_error} and Corollary~\ref{cor:L_2_bound}, our universal median lattice-based algorithm is expected to achieve $L_2$ error rates arbitrarily close to $M^{-3/2}$ for $f_1$ and $M^{-5/2}$ for $f_2$, respectively, where $M=NR$ denotes the total number of function evaluations.

For our implementation of Algorithm~\ref{alg:median}, we set $R$ to the smallest odd integer that satisfies the condition in \eqref{eq:R_large_enough} with dimension $d=2$ and failure probability $\varepsilon=0.01$. This choice ensures that the error bound shown in Corollary~\ref{cor:L_2_bound} holds with a probability of at least $0.99$.
For comparison, we also run the median lattice-based algorithm from \cite{PKG24}. Unlike our present method, this algorithm requires prior knowledge of the function's smoothness and the weights to construct a proper index set. For a fair comparison, we use the same parameter settings and failure probability ($\varepsilon = 0.01$) as reported in the original paper \cite{PKG24}.

Figure~\ref{fig:result1} presents the experimental results, plotting the $L_2$ error of both algorithms against the total number of function evaluations, $M$, on a log-log scale. The left and right panels show the results for test functions $f_1$ and $f_2$, respectively. The $L_2$ error is calculated exactly by using the true Fourier coefficients and the $L_2$-norm of the functions, see \cite{CGK24,PKG24}. Each panel includes reference lines for the rates $M^{-\alpha/2}$, $M^{-3\alpha/4}$, and the optimal rate $M^{-\alpha}$, where the smoothness parameter is $\alpha=3/2$ for $f_1$ and $\alpha=5/2$ for $f_2$. For both functions, our universal algorithm significantly outperforms the non-universal baseline. This superior performance can be attributed to its ability to adaptively construct the index set $K_N$ based on the magnitude of the estimated Fourier coefficients. Nevertheless, the observed convergence rate for our algorithm is approximately $M^{-3\alpha/4}$ within the tested range of $M$, see below for an explanation for this phenomenon.

\begin{figure}[htbp]
    \centering
    \includegraphics[width=0.45\linewidth]{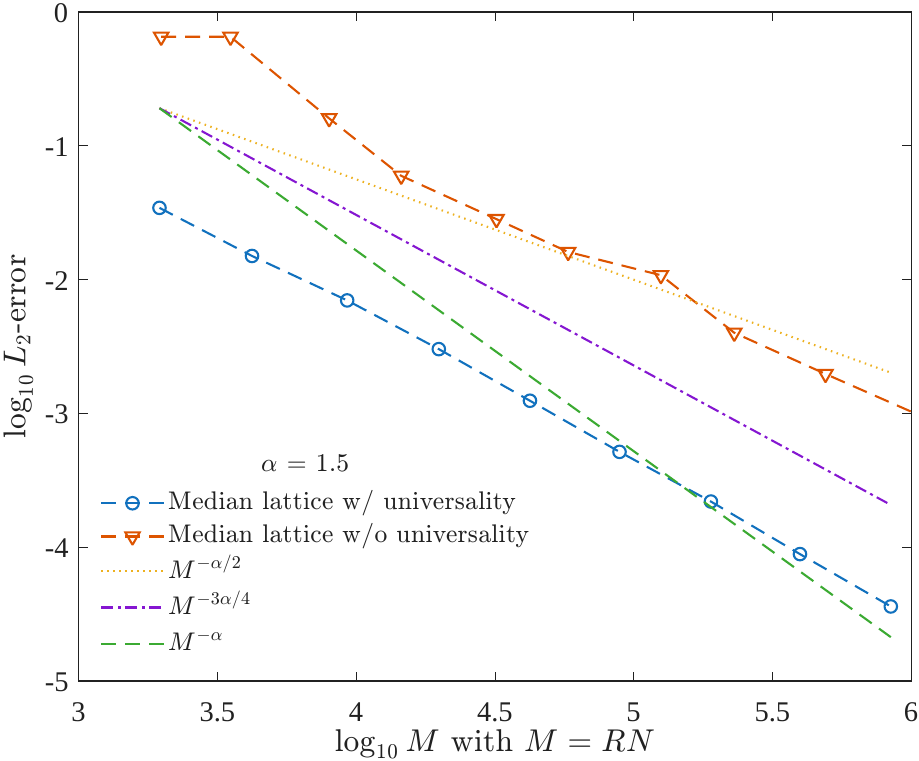}\ \ 
    \includegraphics[width=0.45\linewidth]{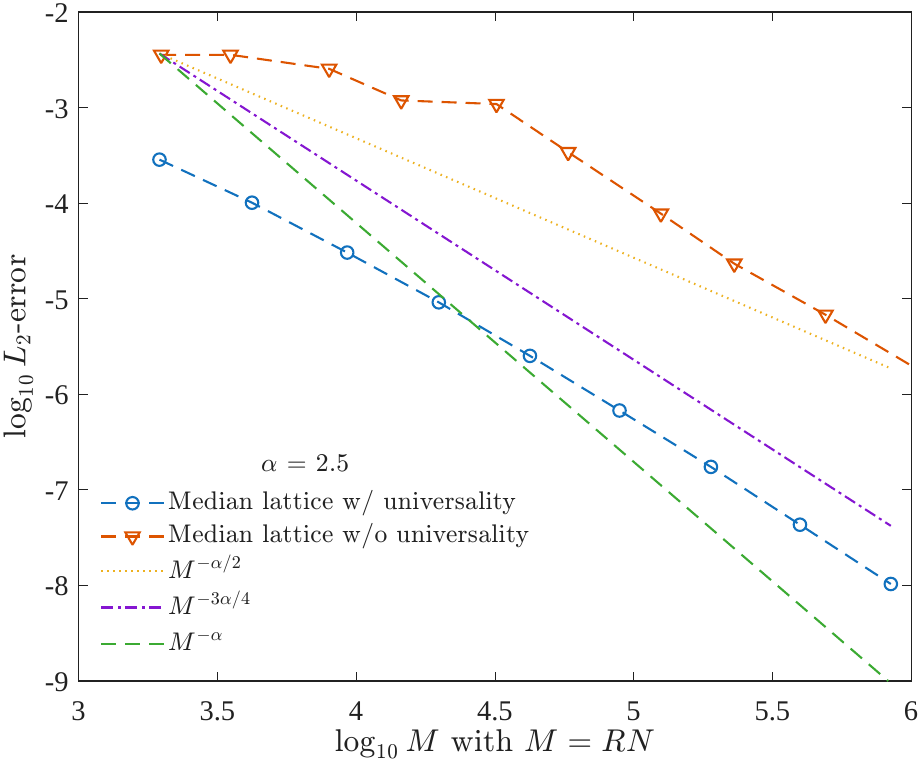}\ \ 
    \caption{$L_2$-approximation error for the test functions $f_1$ (left) and $f_2$ (right) as a function of the total number of function evaluations $M$.}
    \label{fig:result1}
\end{figure}

To evaluate how well our universal algorithm selects the index set $K_N$, we compute the normalized truncation error:
\[ \left(\frac{\sum_{\bsk\in \ZZ^d\setminus K_N}|\widehat{f}(\bsk)|^2}{\|f\|^2_{L_2}}\right)^{1/2}=\left(1-\frac{\sum_{\bsk\in K_N}|\widehat{f}(\bsk)|^2}{\|f\|^2_{L_2}}\right)^{1/2}, \]
where $K_N$ is the set of $N$ indices constructed in the second step of Algorithm~\ref{alg:median}, and $\widehat{f}(\bsk)$ denotes the true $\bsk$-th Fourier coefficient.
We benchmark this error against the theoretically optimal performance achievable with any set of $N$ indices. This benchmark is given by the oracle truncation error:
\[ \left(1-\frac{\sum_{\bsk\in K^{\mathrm{oracle}}_N}|\widehat{f}(\bsk)|^2}{\|f\|^2_{L_2}}\right)^{1/2}. \]
Here, the oracle set $K^{\mathrm{oracle}}_N$ is defined as the set of indices corresponding to the $N$ Fourier coefficients with the largest magnitudes, that is, for any $\bsk\in K^{\mathrm{oracle}}_N$ and $\bsl\notin K^{\mathrm{oracle}}_N$, we have $|\widehat{f}(\bsk)|\ge |\widehat{f}(\bsl)|$.

Figure~\ref{fig:result2} compares these two truncation errors, plotting them against the total number of function evaluations, $M$, on a log-log scale. Although the index set $K_N$ constructed by our algorithm is stochastic, the figure shows the result from a single realization. For both test functions, the curves from our algorithm and the oracle are nearly indistinguishable, which implies that the adaptive choice of $K_N$ is remarkably close to optimal. The convergence rate of this truncation error is again observed to be approximately $M^{-3\alpha/4}$ within the tested range. This finding explains the performance seen in Figure~\ref{fig:result1}: the overall $L_2$ error is already limited by the convergence of the truncation error itself. We therefore conclude that the observed suboptimal convergence rate is a finite-sample effect, and achieving the theoretical rate of $M^{-\alpha}$ would likely require experiments in a larger-$M$ regime.

\begin{figure}[htbp]
    \centering
    \includegraphics[width=0.45\linewidth]{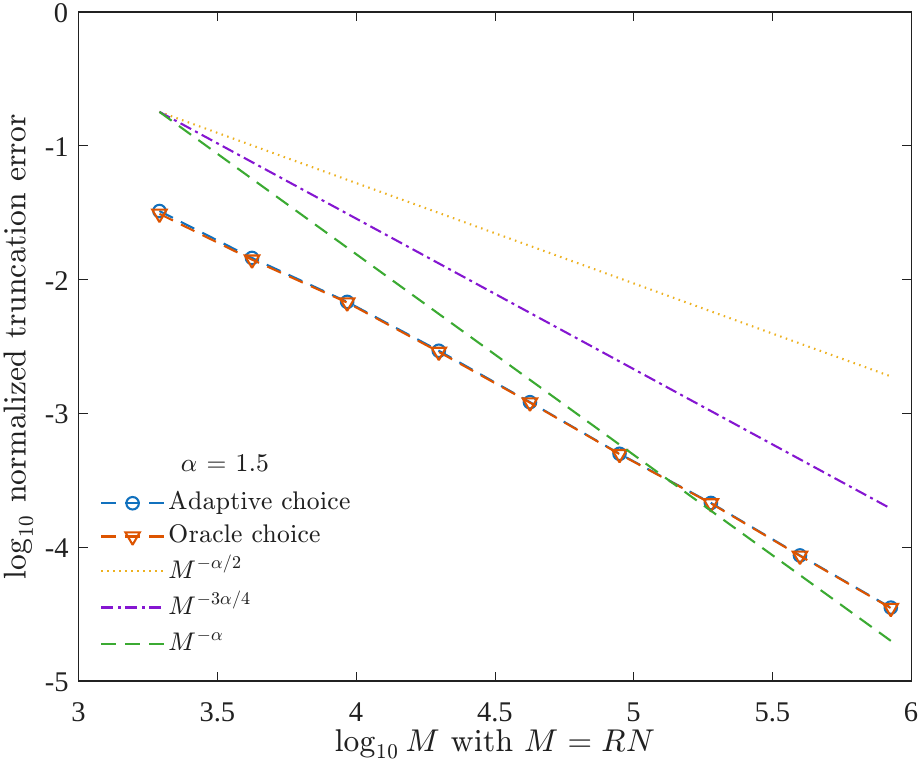}\ \ 
    \includegraphics[width=0.45\linewidth]{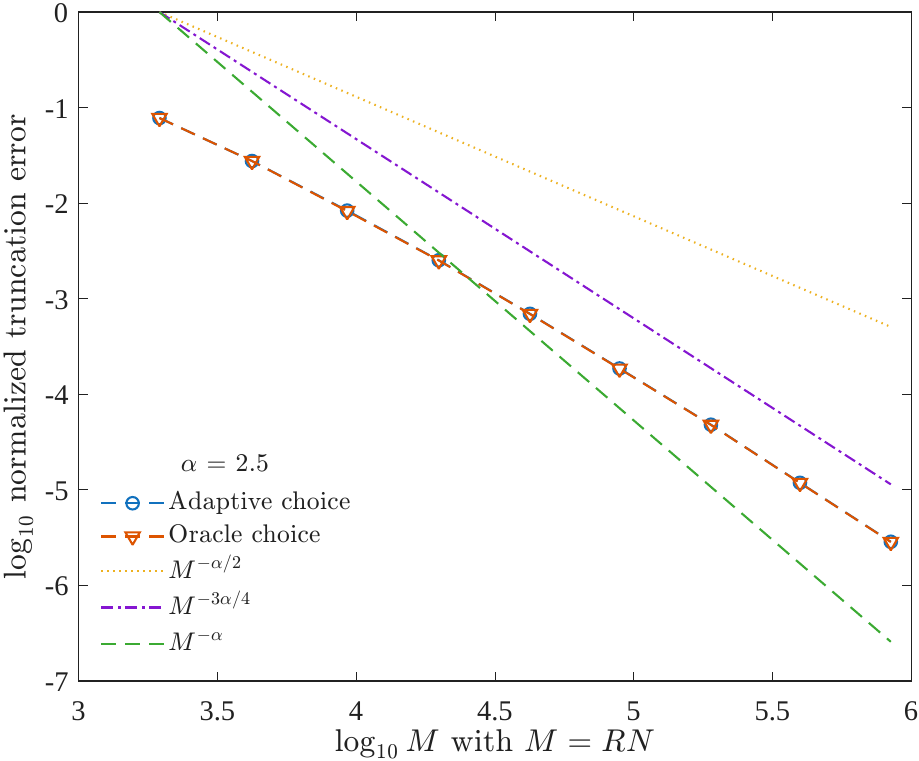}\ \ 
    \caption{Normalized truncation error for the test functions $f_1$ (left) and $f_2$ (right). The error from the adaptive selection of the index set is compared to the optimal (oracle) error.}
    \label{fig:result2}
\end{figure}

\section{Discussion}\label{sec:discussion}

This work extends the median lattice algorithm from \cite{PKG24} to the Korobov space setting with unknown smoothness $\alpha$ and weight parameters $\bsgamma$. A primary computational bottleneck, as pointed out in Remark~\ref{rem:evaluation_vs_work}, lies in Step 2 of Algorithm~\ref{alg:median}, where we we must identify the $N$ largest values of $\mathrm{median}_{r\in\{1{:}R\}}|\widehat{f}_{N,\bsz_r,\bsDelta_r}(\bsh)|^2$ across the candidate set $\Acal_{d}(N/2)$, whose cardinality grows exponentially with the dimension $d$. A naive implementation of this step is therefore computationally prohibitive in high dimensions.

This motivates us to ask whether $K_N$, or a suitable substitute, can be found in computation time polynomial in $d$. A natural solution is to constrain the search to a manageable subset $\Acal^*\subseteq \Acal_{d}(N/2)$, designed within a given computational budget. The design of $\Acal^*$ can incorporate prior knowledge of $\alpha$ and $\bsgamma$ if available. Indeed, with full knowledge, we recover the original algorithm from \cite{PKG24} by setting $\Acal^*$ to be $\Acal_d(N_*)$ defined therein.  In the absence of such knowledge, one could empirically estimate the parameters by fitting the decay of $\operatorname*{median}_{r\in\{1{:}R\}}| \widehat{f}_{N,\bsz_r,\bsDelta_r}(\bsh)|^2$ on a small test set to the inverse weights $r^{-1}_{2\alpha,\bsgamma}(\bsh)$. A full investigation of this adaptive approach is beyond the scope of this paper, so we reserve it for future work.

A second research direction involves generalizing our method to general Sobolev spaces, thereby removing the periodic boundary conditions inherent to Korobov spaces. A straightforward generalization replaces trigonometric series with finite Walsh series and lattice rules with polynomial lattice rules \cite{dick:pill:2010}. While this allows for efficient computation via the Fast Walsh–Hadamard Transform, the resulting approximants are generally discontinuous and lie outside the target Sobolev space. Consequently, they exhibit visual artifacts compared to smoother alternatives like kernel interpolation. A central open question is whether our strategy can be adapted to produce smooth approximants, a challenge we leave for future study.

\section*{Acknowledgments}

The first and third authors acknowledge the support of the Austrian Science Fund (FWF) Project  P 34808/Grant DOI: 10.55776/P34808. The second author acknowledges the support of JSPS KAKENHI Grant Number 23K03210. For open access purposes, the authors have applied a CC BY public copyright license to any author accepted manuscript version arising from this submission. Moreover, the authors would like to thank D.~Krieg for valuable comments.

\bibliographystyle{amsplain}

\iffalse
\begin{small}

\noindent\textbf{Authors' addresses:}\\

\noindent Zexin Pan\\
Institute of Fundamental and Transdisciplinary Research\\
Zhejiang University\\
866 Yuhangtang Road, Xihu District, Hangzhou, Zhejiang Province, 310058, China\\
\texttt{zep002@zju.edu.cn}\\

\noindent Takashi Goda\\
Graduate School of Engineering\\
The University of Tokyo\\
7-3-1 Hongo, Bunkyo-ku, Tokyo 113-8656, Japan\\
\texttt{goda@frcer.t.u-tokyo.ac.jp}\\

\noindent Peter Kritzer\\
Johann Radon Institute for Computational and Applied Mathematics (RICAM)\\
Austrian Academy of Sciences\\
Altenbergerstr. 69, 4040 Linz, Austria\\
 \texttt{peter.kritzer@oeaw.ac.at}\\

\end{small}
\fi

\end{document}